\documentclass[final]{siamart190516}
\usepackage{amsfonts}
\usepackage{amsfonts,amsmath,amssymb}
\usepackage{mathrsfs,mathtools,stmaryrd,wasysym}
\usepackage{enumerate}
\usepackage[mathscr]{euscript} 
\usepackage{enumitem}
\usepackage{comment}  
\usepackage{esint}
\usepackage{graphicx,psfrag}
\usepackage{hyperref}
\usepackage{amsmath,stackengine}
\usepackage{yfonts}
\usepackage{enumitem}
\DeclareMathAlphabet{\mathpzc}{OT1}{pzc}{m}{it}

\newsiamremark{remark}{Remark}

\newcommand{\TheTitle}{Finite element error estimates for a bilinear optimal control problem with pointwise tracking governed by a semilinear elliptic PDE}
\newcommand{\ShortTitle}{Bilinear optimal control with pointwise tracking}
\newcommand{\TheAuthors}{E. Ot\'arola, D. Quero, M. Sasso}

\headers{\ShortTitle}{\TheAuthors}

\title{{\TheTitle}\thanks{EO is partially supported by USM through USM project 2026 PI\_LIR\_26\_06. MS is supported by ANID through Subdirección del Capital Humano/Doctorado Nacional/2026-21262084 and by Direcci\'on de Postgrado of UTFSM through Programa de Incentivos a la Investigaci\'on Cient\'ifica (PIIC) No.~057/2025.}}

\author{Enrique Ot\'arola\thanks{Departamento de Matem\'atica, Universidad T\'ecnica Federico Santa Mar\'ia, Valpara\'iso, Chile. \email{enrique.otarola@usm.cl}}
\and
 Daniel Quero\thanks{Departamento de Ciencias Exactas, Universidad de Los Lagos, Osorno, Chile \email{daniel.querotangol@ulagos.cl}}
\and
Mat\'ias Sasso\thanks{Departamento de Matem\'atica, Universidad T\'ecnica Federico Santa Mar\'ia, Valpara\'iso, Chile \email{matias.sasso@sansano.usm.cl}}}

\ifpdf
\hypersetup{
  pdftitle={\TheTitle},
  pdfauthor={\TheAuthors}
}
\fi

\date{Draft version of \today.}

\makeatletter
\AtBeginDocument{
  \def\url#1{\unskip\kern-0.28em.\@ifnextchar.{\@gobble}{}}
  \def\doi#1{\unskip\kern-0.28em.\@ifnextchar.{\@gobble}{}}
}
\makeatother

\begin{document}

\maketitle
\begin{abstract}
We study finite element approximations for an optimal control problem with pointwise tracking governed by a semilinear elliptic PDE. The control variable enters the state equation as a reaction coefficient, resulting in a bilinear control, and the cost functional includes point evaluations of the state variable. We propose two discretization schemes: a semidiscrete scheme, where the control variable is not discretized, and a fully discrete scheme, where the control variable is discretized using piecewise constant functions. In two and three dimensions, for both schemes, we establish convergence of discrete solutions and prove a priori error bounds that behave as $\mathcal{O}(h|\log h|)$ in the $L^2(\Omega)$-norm for approximating a locally optimal control. In two dimensions, these bounds are improved to $\mathcal{O}(h)$ for the fully discrete scheme and $\mathcal{O}(h^{2}|\log h|^2)$ for the semidiscrete scheme. We conclude with numerical experiments that demonstrate the performance of the proposed schemes.
\end{abstract}

\begin{keywords}
bilinear optimal control, pointwise tracking, semilinear elliptic equations, Dirac measures, regularity estimates, finite element methods, convergence, error estimates.
\end{keywords}

\begin{AMS}
35B65,   
35J61,   
49N60,   
65N15,   
65N30.   
\end{AMS}

\section{Introduction}
\label{sec:intro}
In this paper, we design and analyze finite element methods (FEMs) for a bilinear optimal control problem governed by a semilinear elliptic partial differential equation (PDE). The control variable enters the state equation as a reaction coefficient and is required to satisfy box constraints, while the cost functional tracks the state variable at a finite collection of points. To make the setting precise, we let $d \in \{2,3\}$, let $\Omega \subset \mathbb{R}^{d}$ be an open, bounded polytopal domain with Lipschitz boundary $\partial \Omega$, and let $\mathcal{D} \subset \Omega$ be a finite ordered set. The convexity of $\Omega$ will be assumed from Section \ref{sec:finite_element_disc} onward. Given a regularization parameter $\alpha >0$ and a set of desired states $\{y_{t}\}_{t \in \mathcal{D}} \subset \mathbb{R}$, we introduce the cost functional
\begin{equation}\label{def:cost_func}
J(y,u) := \dfrac{1}{2}\sum_{t \in \mathcal{D}}(y(t) - y_{t})^{2} + \dfrac{\alpha}{2}\|u\|^{2}_{L^{2}(\Omega)}.
\end{equation}
Let $f \in L^2(\Omega)$ be given. The optimal control problem under consideration reads: Minimize $J(y,u)$ subject to the \emph{semilinear elliptic} PDE
\begin{equation}\label{def:state_eq}
-\Delta y + a(\cdot,y) + uy = f
\text{ in } \Omega,
\qquad y = 0
\text{ on } \partial \Omega,
\end{equation}
and the \emph{control constraints}
\begin{equation}\label{def:box_const}
u \in \mathbb{U}_{ad},
\qquad
\mathbb{U}_{ad}:= \{v \in L^{2}(\Omega): \mathtt{a} \leq v(x) \leq \mathtt{b} \text{ for a.e.}~x \in \Omega\}.
\end{equation}
Here, $-\infty < \mathtt{a} < \mathtt{b} < \infty$ are the control bounds, and $a: \Omega \times \mathbb{R} \rightarrow \mathbb{R}$ is a Carath\'eodory function with precise assumptions stated in Section~\ref{sec:assumptions}.

In problem \eqref{def:cost_func}--\eqref{def:box_const}, the control variable $u$ acts as a coefficient in the state equation rather than as an external forcing term. This gives rise to the nonlinear coupling $uy$ between the control and the state, which characterizes \eqref{def:cost_func}--\eqref{def:box_const} as a \emph{bilinear optimal control problem} \cite{MR0414174,MR2641453,MR3878305,Casas_bilinear_1,MR4956345}.
Bilinear controls are particularly relevant when the objective is to modify intrinsic properties of the system \cite{MR735811,DUCA2021109324}; examples include neutron transport \cite{MR735811}, cancer chemotherapy \cite{ledzewicz2004,Zerrik_etal}, and bilinear quantum systems \cite{DUCA2021109324}. Mathematically, even in the linear case $a \equiv 0$, the solution of \eqref{def:state_eq} depends nonlinearly on $u$. Consequently, the resulting optimization problem is nonconvex, global uniqueness of optimal controls cannot be expected, and second order optimality conditions become essential for characterizing local minima and deriving error estimates for FEMs \cite{MR2536007,Casas_bilinear_1,MR4956345}. Moreover, the bilinear term $uy$ may destroy the coercivity of the forms associated with the state and adjoint equations. This requires adapting standard arguments to establish well-posedness and differentiability properties of the control-to-state and control-to-adjoint state maps \cite{Casas_bilinear_1,MR5008466} and introduces additional difficulties in the analysis of the corresponding FEMs \cite{MR2536007,MR4956345}.

  A second distinctive feature of \eqref{def:cost_func}--\eqref{def:box_const} is the pointwise tracking structure of $J$, which involves point evaluations of the state $y$ at the locations $t \in \mathcal{D}$. This produces an adjoint problem whose forcing term is a linear combination of Dirac measures and, consequently, an adjoint variable $p$ with reduced Sobolev regularity: $p \in W^{1,r}(\Omega)$ for $r < d/(d-1)$; see \cite{MR3449612,MR3800041,MR3973329,MR4438718}. At the continuous level, this reduced regularity complicates the analysis, since $p$ enters the optimality conditions through the product $yp$ and its singular behavior must be taken into account when establishing regularity properties of locally optimal controls. At the discrete level, the reduced regularity directly affects the error analysis: suitable $L^2(\Omega)$ error estimates are needed for the adjoint problem, while the comparison of the continuous and discrete adjoint equations involves pointwise errors of the state, making $L^\infty(\Omega)$ error estimates for the state essential.
  The theoretical foundations for \eqref{def:cost_func}--\eqref{def:box_const}, including well-posedness, optimality conditions, and regularity of locally optimal controls, have recently been established in \cite{MR5008466}, on which the present work builds.

Regarding FEMs for bilinear optimal control problems, a pioneering contribution is the work by Kr\"oner and Vexler \cite{MR2536007}, where the case $a \equiv 0$ is analyzed and convergence rates $\mathcal{O}(h)$ and $\mathcal{O}(h^{3/2})$ for the approximation of an optimal control are obtained using piecewise constant and piecewise linear discretizations, respectively. More recently, the authors of \cite{MR4956345} have developed a FEM, closely related to the fully discrete scheme proposed in this work, for a bilinear optimal control problem governed by a semilinear PDE with a smooth, distributed cost functional. To the best of our knowledge, FEMs for the bilinear optimal control problem with pointwise tracking \eqref{def:cost_func}--\eqref{def:box_const},  which, as discussed above, requires substantially different regularity and error analyses, have not previously been considered. In the related but distinct setting of additive controls with pointwise tracking, we refer to \cite{MR3449612,MR3523574,MR3800041,MR3973329,MR4438718} for a priori error estimates and to \cite{MR3679932} for adaptive methods.

In this work, we develop and analyze two finite element discretization strategies for problem \eqref{def:cost_func}--\eqref{def:box_const}: a fully discrete scheme, in which the control variable is approximated by piecewise constant functions, and a semidiscrete scheme based on the variational discretization concept of \cite{MR2122182}, in which the control space is not explicitly discretized. For both schemes, we prove convergence of discrete solutions and derive a priori error estimates of order $\mathcal{O}(h|\log h|)$ in the $L^{2}(\Omega)$-norm for approximating a locally optimal control. In two dimensions, these estimates are improved to $\mathcal{O}(h)$ for the fully discrete scheme and $\mathcal{O}(h^2|\log h|^{2})$ for the semidiscrete scheme. The analysis is complemented by numerical experiments in two and three dimensions that are consistent with the theoretical rates.

In what follows, we briefly discuss the main contributions of this work and the main difficulties involved in establishing them:
\begin{itemize}[leftmargin=*,nosep]
\item \emph{Regularity of locally optimal controls $(d=3)$:} If $\Omega$ is a convex polytope, we extend the analysis of \cite{MR5008466} to three dimensions and establish, under suitable sign assumptions on $f-a(\cdot,0)$, the Lipschitz regularity of a locally optimal control $\bar{u}$.
\item \emph{Convergence of discretizations:} We prove that every sequence of discrete global solutions admits a subsequence converging, in the $L^{2}(\Omega)$-norm, to a global solution of \eqref{def:cost_func}--\eqref{def:box_const}, and that continuous strict local solutions can be approximated by local solutions of the discrete problems.
\item \emph{Error estimates:} For both discretization schemes, we derive error bounds that behave as $\mathcal{O}(h|\log h|)$ in the $L^{2}(\Omega)$-norm for approximating a locally optimal control. In two dimensions, we improve these estimates to $\mathcal{O}(h)$ for the fully discrete scheme and $\mathcal{O}(h^{2}|\log h|^{2})$ for the semidiscrete scheme. The proof of the latter relies on several refined results concerning the pointwise behavior of the discrete adjoint state near the observation points at which the optimal state does not attain the desired value $y_{t}$, and the coincidence of the continuous and semidiscrete optimal controls in neighborhoods of these points.
\end{itemize}

The paper is organized as follows. In Section \ref{sec:notation_and_prel}, we introduce the notation and assumptions used throughout the paper. Preliminary results concerning weak solutions of \eqref{def:state_eq} are reviewed in Section \ref{sec:semilinear_eq}. In Section \ref{sec:OCP}, we review the main results of \cite{MR5008466}: existence of solutions and optimality conditions for a weak version of \eqref{def:cost_func}--\eqref{def:box_const}. In Section \ref{sec:loc_opt_control_reg}, we collect and extend regularity properties of locally optimal controls. In Section \ref{sec:finite_element_disc}, we introduce two numerical schemes for \eqref{def:cost_func}--\eqref{def:box_const} and analyze the discretization of the corresponding state and adjoint equations. Convergence of the resulting discrete optimal control problems is established in Section \ref{sec:conv_discretizations}, while Section \ref{sec:error_estimates} is devoted to a priori error estimates for the approximation of a locally optimal control. Finally, in Section \ref{sec:num_examples}, we present numerical experiments in two and three dimensions that illustrate the theoretical results.


\section{Notation and assumptions}
\label{sec:notation_and_prel}

This section introduces the notation and assumptions on which the subsequent analysis is based.


\subsection{Notation}\label{sec:notation}
Throughout this work, let $d \in \{2,3\}$ and let $\Omega \subset \mathbb{R}^{d}$ be an open, bounded, polytopal domain with Lipschitz boundary $\partial \Omega$. The convexity of $\Omega$ will be assumed when needed; in particular, for certain regularity results and throughout the finite element analysis.
 
Let $\mathscr{X}$ and $\mathscr{Y}$ be Banach function spaces. The dual space of $\mathscr{X}$ is denoted by $\mathscr{X}'$ and the norm of $\mathscr{X}$ by $\|\cdot\|_{\mathscr{X}}$. The duality pairing between $\mathscr{X}'$ and $\mathscr{X}$ is denoted by $\langle \cdot,\cdot\rangle_{\mathscr{X}',\mathscr{X}}$ and abbreviated as $\langle \cdot,\cdot\rangle$ when no confusion arises. The continuous embedding of $\mathscr{X}$ into $\mathscr{Y}$ is denoted by $\mathscr{X} \hookrightarrow \mathscr{Y}$. For $\{x_{n}\}_{n \in \mathbb{N}} \subset \mathscr{X}$, we use $x_{n} \rightarrow x$ and $x_{n} \rightharpoonup x$ to denote strong and weak convergence to $x$, respectively, as $n \uparrow \infty$.

The notation $\mathfrak{a}\lesssim\mathfrak{b}$ indicates that $\mathfrak{a}\leq C\mathfrak{b}$, where $C>0$ is a generic constant independent of the quantities under consideration. The value of $C$ may change at each occurrence and will be specified explicitly whenever relevant.


\subsection{Assumptions}\label{sec:assumptions}
The analysis presented in this paper relies on the following assumptions
on the nonlinear function $a$, which have also been considered in \cite{MR5008466,Casas_bilinear_1}:
\begin{enumerate}[label=(A.\arabic*)]
\item \label{A1} $a : \Omega \times \mathbb{R} \rightarrow \mathbb{R}$ is a Carath\'eodory function of class $C^{2}$ with respect to its second variable and $a(\cdot , 0) \in L^{2}(\Omega)$.
\item \label{A2} There exists a function $a_{0} \in L^{\infty}(\Omega)$ such that $\frac{\partial a}{\partial y}(x,y) \geq a_{0}(x)$ for a.e.~$x \in \Omega$ and for all $y \in \mathbb{R}$.
\item \label{A3} For all $\mathfrak{m}>0$, there exists a constant $C_{a,\mathfrak{m}}>0$ such that
\begin{equation*}
\sum_{i=1}^{2} \left| \frac{\partial^{i} a}{\partial y^{i}}(x,y) \right| \leq C_{a,\mathfrak{m}},
\qquad
\left| \frac{\partial^{2} a}{\partial y^{2}}(x,v) - \frac{\partial^{2} a}{\partial y^{2}}(x,w) \right| \leq C_{a,\mathfrak{m}}\left| v-w\right|
\end{equation*}
for a.e.~$x \in \Omega$ and for all $y,v,w \in [-\mathfrak{m},\mathfrak{m}]$.
\end{enumerate}


\section{The state equation}\label{sec:semilinear_eq}
Throughout this section, we use $\mathtt{u}$, $\mathtt{y}$, and $\mathtt{f}$ to denote the reaction coefficient, solution, and forcing term, respectively, of the state equation considered independently of the optimal control problem. The corresponding quantities in the optimal control problem are denoted by $u$, $y$, and $f$.

Following \cite{Casas_bilinear_1,MR5008466}, we introduce the following set:
\begin{equation}
\mathcal{A}_{0} := \{ w \in L^{2}(\Omega): a_{0}(x)+ w(x) \geq 0~\text{for a.e.}~x\in \Omega \},
\label{eq:mathcal_A_0}
\end{equation}
where $a_{0}$ is as in \ref{A2}. Given $\mathtt{u} \in \mathcal{A}_{0}$ and $\mathtt{f} \in L^{q}(\Omega)$ for some $q > d/2$, we consider the following weak formulation of \eqref{def:state_eq}: Find $\mathtt{y} \in H^{1}_{0}(\Omega)$ such that
\begin{equation}\label{eq:weak_semi_bilinear_eq}
\int_{\Omega} \nabla \mathtt{y} \cdot \nabla v \mathrm{d}x +  \int_{\Omega} a(\cdot,\mathtt{y}) v \mathrm{d}x
+  \int_{\Omega} \mathtt{u}\mathtt{y}v \mathrm{d}x = \int_{\Omega} \mathtt{f}v \mathrm{d}x \quad \forall v \in H_{0}^{1}(\Omega).
\end{equation}

The following result establishes the well-posedness of \eqref{eq:weak_semi_bilinear_eq}.

\begin{theorem}[well-posedness]
Let us assume that \ref{A1}--\ref{A3} hold. Given $\mathtt{u} \in \mathcal{A}_0$ and $\mathtt{f} \in L^{q}(\Omega)$ for some $q > d/2$, there exists a unique solution $\mathtt{y} \in H_0^1(\Omega) \cap L^{\infty}(\Omega)$ for \eqref{eq:weak_semi_bilinear_eq}. In addition, $\mathtt{y}$ satisfies the bounds
\begin{equation}
\label{eq:stability_weak_problem}
\| \nabla \mathtt{y}\|_{L^{2}(\Omega)} \leq \| \mathtt{f} - a(\cdot,0) \|_{H^{-1}(\Omega)},
\qquad
\| \mathtt{y} \|_{L^{\infty}(\Omega)} \lesssim \| \mathtt{f} -a(\cdot,0) \|_{L^{q}(\Omega)}.
\end{equation}
The hidden constant in the $L^{\infty}(\Omega)$-estimate is independent of $\mathtt{y}$, $a$, $\mathtt{u}$, and $\mathtt{f}$.
\label{thm:well-posedness-semilinearPDE}
\end{theorem}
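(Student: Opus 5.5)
The natural approach is to rewrite the nonlinearity so that the operator becomes monotone, and then combine Minty--Browder with a Stampacchia truncation argument. Set
\[
b(x,s) := a(x,s) - a(x,0) + \mathtt{u}(x)s ,
\]
so that the equation reads $-\Delta \mathtt{y} + b(\cdot,\mathtt{y}) = \mathtt{f} - a(\cdot,0)$. By \ref{A2}, $\partial_s b(x,s) \geq a_0(x) + \mathtt{u}(x) \geq 0$ because $\mathtt{u}\in\mathcal{A}_0$. Hence $s\mapsto b(x,s)$ is nondecreasing with $b(x,0)=0$, and in particular $b(x,s)s \geq 0$. The right-hand side $\mathtt{f}-a(\cdot,0)$ lies in $L^{2}(\Omega)\subset H^{-1}(\Omega)$ when $q\ge 2$, and in $L^{q}(\Omega)\subset H^{-1}(\Omega)$ for $q>d/2$ by Sobolev embedding.

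\emph{Existence.} Since \ref{A3} only gives local bounds on $\partial_s a$, I would first truncate. For $M>0$ let $b_M(x,s):=b(x,\mathrm{proj}_{[-M,M]}(s))$. By \ref{A3}, $|b_M(x,s)|\le C_{a,M}M+|\mathtt{u}(x)|M$, and $\mathtt{u}\in L^2(\Omega)$. The truncated map is still monotone, and the operator $-\Delta+b_M(\cdot,\cdot)$ is monotone, hemicontinuous and coercive on $H^1_0(\Omega)$. Coercivity follows from $b_M(x,s)s\ge0$. Boundedness holds in dimensions $d\le 3$ because $\int |\mathtt{u}|M|v| \le M\|\mathtt{u}\|_{L^2}\|v\|_{L^2}$. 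Minty--Browder then yields a solution $\mathtt{y}_M\in H^1_0(\Omega)$.

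\emph{$L^\infty$ bound.} This is the key step. Test the truncated equation with $(\mathtt{y}_M-k)^+$ for $k\ge0$. Since $b_M(x,s)\ge0$ for $s>k\ge0$, the monotone term is nonnegative on $\{\mathtt{y}_M>k\}$ and can be dropped, giving
\[
\|\nabla(\mathtt{y}_M-k)^+\|_{L^2}^2\le\int (\mathtt{f}-a(\cdot,0))(\mathtt{y}_M-k)^+ .
\]
This is exactly Stampacchia's setting, so the classical level-set iteration (e.g.\ Kinderlehrer--Stampacchia, Thm.\ B.2) gives
\[
\|\mathtt{y}_M^+\|_{L^\infty}\le C\|\mathtt{f}-a(\cdot,0)\|_{L^q} \quad \text{for } q>d/2 ,
\]
with $C$ depending only on $\Omega$, $d$ and $q$. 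The symmetric argument handles the negative part. The resulting constant is independent of $M$, $a$, $\mathtt{u}$ and $\mathtt{f}$. Choosing $M$ larger than this bound gives $b_M(\cdot,\mathtt{y}_M)=b(\cdot,\mathtt{y}_M)$, so $\mathtt{y}:=\mathtt{y}_M$ solves \eqref{eq:weak_semi_bilinear_eq}. The integrals are meaningful because $\mathtt{y}$ is bounded and \ref{A3} controls $a(\cdot,\mathtt{y})$ in $L^2$. The main obstacle is making sure the constant is genuinely uniform, which is why the monotone term must be discarded rather than used.

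\emph{Energy bound and uniqueness.} Testing with $v=\mathtt{y}$ and using $b(\cdot,\mathtt{y})\mathtt{y}\ge0$ gives $\|\nabla \mathtt{y}\|^2_{L^2}\le\|\mathtt{f}-a(\cdot,0)\|_{H^{-1}}\|\nabla \mathtt{y}\|_{L^2}$, which is the first estimate once $H^{-1}$ is normed dually to $\|\nabla\cdot\|_{L^2}$. For uniqueness, let $\mathtt{y}_1,\mathtt{y}_2$ be two bounded solutions. Subtract the equations and test with $\mathtt{y}_1-\mathtt{y}_2$. Monotonicity of $b$ gives $\|\nabla(\mathtt{y}_1-\mathtt{y}_2)\|_{L^2}^2\le0$, hence $\mathtt{y}_1=\mathtt{y}_2$. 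The same monotonicity argument shows that any $H^1_0$ solution for which the nonlinear terms make sense coincides with the bounded solution constructed above.
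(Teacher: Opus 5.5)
Your proposal is correct and follows essentially the standard route behind the result the paper invokes (\cite[Theorem 3.1]{MR5008466}, built on the monotone reformulation of \cite[Theorem 2.4]{Casas_bilinear_1}): rewrite with a nondecreasing nonlinearity vanishing at $0$, truncate, solve by monotone-operator theory, obtain an $M$-independent $L^{\infty}(\Omega)$ bound from Stampacchia's estimate \cite[Theorem B.2]{MR1786735} after discarding the monotone term, and get the energy bound and uniqueness from monotonicity. Absorbing $\mathtt{u}s$ into $b$, instead of keeping the nonnegative linear term $(a_0+\mathtt{u})\mathtt{y}$ separately, is purely cosmetic. One detail needs a fix: for $q>2$, assumption \ref{A1} only gives $a(\cdot,0)\in L^{2}(\Omega)$, so $\|\mathtt{f}-a(\cdot,0)\|_{L^{q}(\Omega)}$ may be infinite and no finite $M$ lies above it. Run the Stampacchia iteration with exponent $\min\{q,2\}>d/2$ to fix $M$; the stated $L^{q}(\Omega)$ bound then follows by applying the same estimate to the bounded solution, and it has content only when that norm is finite.
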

\begin{proof}
See \cite[Theorem 3.1]{MR5008466}.
\end{proof}

Under the additional assumption $\mathtt{f} \in L^{2}(\Omega)$, the solution $\mathtt{y}$ enjoys further Sobolev and H\"older regularity. If, in addition, $\Omega$ is convex, then $\mathtt{y}$ belongs to $H^{2}(\Omega)$.

\begin{theorem}[regularity]\label{thm:state_reg}
Let the assumptions of Theorem \ref{thm:well-posedness-semilinearPDE} hold. Assume, in addition, that $\mathtt{f} \in L^{2}(\Omega)$. Then, the unique solution $\mathtt{y} \in H_{0}^{1}(\Omega) \cap L^{\infty}(\Omega)$ of problem \eqref{eq:weak_semi_bilinear_eq} satisfies the following regularity properties:
\begin{itemize}[leftmargin=*,nosep]
\item[(i)] There exists $\kappa >4$ for $d=2$ and $\kappa >3$ for $d=3$ such that $\mathtt{y} \in W^{1,\kappa}(\Omega) \cap C^{0,\varsigma}(\bar{\Omega})$, where $0<\varsigma \leq 1-d/\kappa < 1$, and
$
\|\nabla \mathtt{y}\|_{L^{\kappa}(\Omega)}
+
\|\mathtt{y}\|_{C^{0,\varsigma}(\bar \Omega)}
\lesssim \| \mathtt{f} - a(\cdot,0)\|_{L^{2}(\Omega)} ( 1 + \| \mathtt{u} \|_{L^2(\Omega)} ).
$
\item[(ii)] If, in addition, $\Omega$ is convex, then $\mathtt{y} \in H^{1}_{0}(\Omega) \cap H^{2}(\Omega)$ and
$
 \| \mathtt{y} \|_{H^{2}(\Omega)} \lesssim \| \mathtt{f} -a(\cdot,0)\| _{L^{2}(\Omega)} (1+\|\mathtt{u} \|_{L^{2}(\Omega)}).
$
\end{itemize}
\end{theorem}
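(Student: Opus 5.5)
The strategy is to move all lower-order terms to the right-hand side and treat $\mathtt{y}$ as the solution of a Poisson problem,
\[
-\Delta \mathtt{y} = g, \qquad g := \mathtt{f} - a(\cdot,\mathtt{y}) - \mathtt{u}\mathtt{y}, \qquad \mathtt{y}=0 \text{ on } \partial\Omega .
\]
The regularity of $\mathtt{y}$ then follows from known regularity results for the Laplacian on polytopes. Everything rests on bounding $\|g\|_{L^2(\Omega)}$.

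\textbf{Bounding $g$.} By Theorem \ref{thm:well-posedness-semilinearPDE} with $q=2>d/2$, we have $\mathfrak{m}:=\|\mathtt{y}\|_{L^\infty(\Omega)}\lesssim \|\mathtt{f}-a(\cdot,0)\|_{L^2(\Omega)}$. Write
\[
a(\cdot,\mathtt{y}) = a(\cdot,0) + \int_0^1 \tfrac{\partial a}{\partial y}(\cdot,s\mathtt{y})\,\mathrm{d}s\;\mathtt{y}.
\]
Assumption \ref{A3} gives $|a(\cdot,\mathtt{y})-a(\cdot,0)|\le C_{a,\mathfrak{m}}|\mathtt{y}|$. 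Hence
\[
\|g\|_{L^2(\Omega)} \le \|\mathtt{f}-a(\cdot,0)\|_{L^2(\Omega)} + \bigl(C_{a,\mathfrak{m}}|\Omega|^{1/2} + \|\mathtt{u}\|_{L^2(\Omega)}\bigr)\|\mathtt{y}\|_{L^\infty(\Omega)} \lesssim \|\mathtt{f}-a(\cdot,0)\|_{L^2(\Omega)}\bigl(1+\|\mathtt{u}\|_{L^2(\Omega)}\bigr).
\]
The key point is that $\mathtt{u}$ is only in $L^2$, and it is the $L^\infty$ bound on $\mathtt{y}$ that makes $\mathtt{u}\mathtt{y}\in L^2$. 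The constant $C_{a,\mathfrak{m}}$ depends on $\mathfrak{m}$, which is controlled by the data. Absorbing it into the hidden constant is the one delicate bookkeeping point (the constant depends on $a$ and on a bound on the data), and I would state this explicitly.

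\textbf{Part (i).} The Dirichlet Laplacian on a Lipschitz polytope is an isomorphism $W^{1,\kappa}_0(\Omega)\to W^{-1,\kappa}(\Omega)$ for some range of $\kappa$ above $2$ (Jerison--Kenig; for polytopes, via Dauge's corner and edge analysis). Since $L^2(\Omega)\hookrightarrow W^{-1,\kappa}(\Omega)$ whenever $\kappa\le 2d/(d-2)$, i.e. any $\kappa<\infty$ for $d=2$ and $\kappa\le 6$ for $d=3$, we obtain
\[
\|\nabla\mathtt{y}\|_{L^\kappa(\Omega)}\lesssim\|g\|_{L^2(\Omega)}.
\]
Hölder continuity then follows from Morrey's embedding $W^{1,\kappa}\hookrightarrow C^{0,1-d/\kappa}$ with $\kappa>d$, together with the Poincaré inequality for the full $W^{1,\kappa}$ norm.

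\textbf{Main obstacle.} The hard step is justifying the specific thresholds $\kappa>4$ in 2D and $\kappa>3$ in 3D for nonconvex polytopes. In 2D, reentrant corners with interior angle $\omega<2\pi$ give singular exponents $\pi/\omega>1/2$, so $\nabla\mathtt{y}\in L^\kappa$ for $\kappa<2/(1-\pi/\omega)$, which exceeds $4$. In 3D, I would invoke the known result that $W^{1,p}$ regularity of the Laplacian with $L^2$ (indeed $W^{-1,p}$) data holds for some $p>3$ on polyhedral/Lipschitz domains (Dauge; Maz'ya--Rossmann), and cite it rather than reprove it.

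\textbf{Part (ii).} For convex $\Omega$, the classical $H^2$ regularity of the Dirichlet Laplacian (Grisvard) gives
\[
\|\mathtt{y}\|_{H^2(\Omega)}\lesssim\|g\|_{L^2(\Omega)},
\]
and the bound on $g$ above finishes the proof.
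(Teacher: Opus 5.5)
Your proposal is correct and is essentially the standard argument the paper delegates to \cite[Theorems 3.2 and 3.4]{MR5008466}. You rewrite the equation as $-\Delta\mathtt{y}=\mathtt{f}-a(\cdot,\mathtt{y})-\mathtt{u}\mathtt{y}$ and bound the right-hand side in $L^{2}(\Omega)$ using the $L^{\infty}(\Omega)$ estimate of Theorem \ref{thm:well-posedness-semilinearPDE} together with \ref{A3}; this is exactly where the factor $1+\|\mathtt{u}\|_{L^2(\Omega)}$ comes from. You then apply $W^{1,\kappa}$ regularity of the Dirichlet Laplacian with Morrey's embedding for (i), and Grisvard's $H^{2}$ regularity on convex domains for (ii).

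One remark: your ``main obstacle'' is not really one. The Jerison--Kenig theorem you already cite gives invertibility $W^{1,\kappa}_0(\Omega)\to W^{-1,\kappa}(\Omega)$ on any bounded Lipschitz domain for $\kappa<4+\epsilon$ when $d=2$ and $\kappa<3+\epsilon$ when $d=3$. The thresholds $\kappa>4$ and $\kappa>3$ in the statement are precisely these endpoints, so no separate corner or edge analysis is needed.
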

\begin{proof}
See \cite[Theorems 3.2 and 3.4]{MR5008466}.
\end{proof}

We conclude this section by introducing the control-to-state map $\mathcal{S}$. There exist an open set $\mathcal{A} \subset L^{2}(\Omega)$ with $\mathcal{A}_{0} \subset \mathcal{A}$ and a $C^2$ map
$
\mathcal{S}: \mathcal{A} \rightarrow H^{1}_{0}(\Omega) \cap C^{0,\varsigma}(\bar{\Omega}),
$
given by $\mathcal{S}(\mathtt{u}) = \mathtt{y}$, where $\mathtt{y}$ solves \eqref{eq:weak_semi_bilinear_eq}; $\varsigma$ is as in Theorem \ref{thm:state_reg}. The construction of $\mathcal{A}$ and the proof of this result can be found in \cite[Theorem 3.5]{MR5008466}. In addition, given $\mathtt{u} \in \mathcal{A}$ and $\mathtt{g} \in L^{2}(\Omega)$, the directional derivative $\mathtt{z} = \mathcal{S}'(\mathtt{u}) \mathtt{g} \in H^{1}_{0}(\Omega) \cap C^{0,\varsigma}(\bar{\Omega})$ solves
\[
\int_{\Omega} \nabla \mathtt{z} \cdot \nabla v \mathrm{d}x + \int_{\Omega} \left[ \frac{\partial a}{\partial y}(\cdot,\mathtt{y}) + \mathtt{u} \right]\mathtt{z} v \mathrm{d}x = -\int_{\Omega} \mathtt{g} \mathtt{y} v \mathrm{d}x \quad \forall v \in H^{1}_{0}(\Omega),
\]
where $\mathtt{y} = \mathcal{S}(\mathtt{u})$; see \cite[Theorem 3.5 and Remark 3.6]{MR5008466} for details.


\section{The optimal control problem}\label{sec:OCP}
Assuming that \ref{A1}--\ref{A3} hold, we consider the following weak version of the optimal control problem \eqref{def:cost_func}--\eqref{def:box_const}:
\begin{equation}\label{eq:weak_ocp}
    \min
    \left\lbrace J(y,u) : (y,u) \in \left( H^{1}_{0}(\Omega) \cap C^{0,\varsigma}(\bar{\Omega}) \right)
    \times \mathbb{U}_{ad}\right\rbrace
\end{equation}
subject to the \emph{semilinear elliptic} PDE
\begin{equation}\label{eq:weak_state_eq}
    (\nabla y, \nabla v)_{L^{2}(\Omega)} + (a(\cdot,y),v)_{L^{2}(\Omega)} + (uy,v)_{L^{2}(\Omega)} = (f,v)_{L^{2}(\Omega)}  \quad \forall v \in H^{1}_{0}(\Omega). 
\end{equation}
Here, $f \in L^{2}(\Omega)$ and $\varsigma$ is as in the statement of Theorem \ref{thm:state_reg}. As in \cite[Assumption 3.1]{Casas_bilinear_1}, we assume that $a_{0}(x)+\mathtt{a} \geq 0$ for a.e.~$x \in \Omega$. Since every $u\in\mathbb{U}_{ad}$ satisfies $u \geq \mathtt{a}$ a.e.~in $\Omega$, we have $\mathbb{U}_{ad}\subset\mathcal{A}_{0}\subset\mathcal{A}$. Consequently, the results of Section \ref{sec:semilinear_eq} apply. In particular, Theorem \ref{thm:state_reg} shows that the unique solution $y$ of problem \eqref{eq:weak_state_eq} belongs to $C^{0,\varsigma}(\bar{\Omega})$, so point evaluations of $y$ are well-defined in the cost functional \eqref{def:cost_func}.
 
We say that $\bar{u} \in \mathbb{U}_{ad}$ is a \emph{globally optimal control} for \eqref{eq:weak_ocp}--\eqref{eq:weak_state_eq} if $J(\mathcal{S}(\bar{u}),\bar{u}) \leq J(\mathcal{S}(u),u)$ for all $u \in \mathbb{U}_{ad}$. In addition, if $\bar{u}$ is a globally optimal control and $\bar{y} = \mathcal{S}(\bar{u})$, then the pair $(\bar{y},\bar{u})$ is called a \emph{globally optimal state-control pair}.

The next result establishes the existence of a globally optimal state--control pair.

\begin{theorem}[global existence]
The optimal control problem \eqref{eq:weak_ocp}--\eqref{eq:weak_state_eq} admits at least one globally optimal state--control pair $(\bar{y},\bar{u}) \in \left( H^{1}_{0}(\Omega) \cap C^{0,\varsigma}(\bar{\Omega}) \right) \times \mathbb{U}_{ad}$.
\end{theorem}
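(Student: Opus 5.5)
We use the direct method of the calculus of variations on the reduced functional $j(u) := J(\mathcal{S}(u),u)$ over $\mathbb{U}_{ad}$. Since $J \geq 0$, the infimum $\mathfrak{j} := \inf_{u \in \mathbb{U}_{ad}} j(u)$ is finite, and we pick a minimizing sequence $\{u_n\}_{n\in\mathbb{N}} \subset \mathbb{U}_{ad}$ with $j(u_n) \to \mathfrak{j}$. The set $\mathbb{U}_{ad}$ is bounded, closed, and convex in $L^2(\Omega)$, hence weakly sequentially compact. After passing to a subsequence, $u_n \rightharpoonup \bar u$ in $L^2(\Omega)$ for some $\bar u \in \mathbb{U}_{ad}$; in fact, along a further subsequence we also have weak-$*$ convergence in $L^\infty(\Omega)$.

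Next we set $y_n := \mathcal{S}(u_n)$. Since $\|u_n\|_{L^2(\Omega)} \le |\Omega|^{1/2}\max\{|\mathtt a|,|\mathtt b|\}$, Theorem \ref{thm:well-posedness-semilinearPDE} and Theorem \ref{thm:state_reg}(i) yield uniform bounds for $y_n$ in $H^1_0(\Omega)\cap W^{1,\kappa}(\Omega)\cap C^{0,\varsigma}(\bar\Omega)$. By Rellich's theorem and Arzel\`a--Ascoli (the embedding $C^{0,\varsigma}(\bar\Omega)\hookrightarrow C(\bar\Omega)$ is compact), we extract a subsequence with $y_n \rightharpoonup \bar y$ in $H^1_0(\Omega)$, $y_n \to \bar y$ in $L^2(\Omega)$, and $y_n \to \bar y$ uniformly in $\bar\Omega$.

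The main step is to identify $\bar y = \mathcal{S}(\bar u)$ by passing to the limit in \eqref{eq:weak_state_eq}, which is delicate because of the bilinear term. The gradient term converges by weak convergence. For the nonlinear term, the uniform bound $\|y_n\|_{L^\infty(\Omega)}\le \mathfrak{m}$ together with \ref{A3} and the mean value theorem give $|a(\cdot,y_n)-a(\cdot,\bar y)| \le C_{a,\mathfrak m}|y_n-\bar y|$, so $a(\cdot,y_n)\to a(\cdot,\bar y)$ in $L^2(\Omega)$. For the bilinear term we split
\[
u_n y_n - \bar u \bar y = u_n(y_n-\bar y) + (u_n-\bar u)\bar y .
\]
Tested against $v \in H^1_0(\Omega)$, the first piece is bounded by $\max\{|\mathtt a|,|\mathtt b|\}\,\|y_n-\bar y\|_{L^2(\Omega)}\|v\|_{L^2(\Omega)}\to0$. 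The second piece tends to zero because $\bar y v \in L^2(\Omega)$ and $u_n \rightharpoonup \bar u$ in $L^2(\Omega)$. Hence $\bar y$ solves \eqref{eq:weak_state_eq} with control $\bar u$. Since $\bar u\in\mathbb{U}_{ad}\subset\mathcal{A}_0$, the uniqueness part of Theorem \ref{thm:well-posedness-semilinearPDE} gives $\bar y=\mathcal{S}(\bar u)$, and moreover the whole subsequence converges to this limit.

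Finally, the uniform convergence implies $y_n(t)\to\bar y(t)$ for each of the finitely many $t\in\mathcal{D}$, so the tracking term converges. The map $u\mapsto\frac{\alpha}{2}\|u\|^2_{L^2(\Omega)}$ is convex and continuous, hence weakly lower semicontinuous. Therefore
\[
j(\bar u)\le\liminf_{n\to\infty} j(u_n)=\mathfrak{j},
\]
so $\bar u$ is a globally optimal control and $(\bar y,\bar u)$ is a globally optimal state--control pair in the stated space. The only step needing care is the limit in the product $u_n y_n$, which works because one factor converges strongly and the other weakly.
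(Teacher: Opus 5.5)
Your proof is correct and follows the standard direct-method argument the paper relies on: the paper simply cites \cite[Theorem 4.1]{MR5008466}, and the same compactness plus strong--weak limit passage in the bilinear term $u_n y_n$ reappears in the proof of Theorem~\ref{thm:aux_error_estimate_state}. You also correctly used the uniform $C^{0,\varsigma}(\bar\Omega)$ bound from Theorem~\ref{thm:state_reg}(i), not merely the $H^1_0(\Omega)$ bound, which is what makes the point evaluations $y_n(t)\to\bar y(t)$ legitimate without assuming convexity.
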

\begin{proof}
See \cite[Theorem 4.1]{MR5008466}.
\end{proof}


\subsection{The adjoint equation}\label{sec:adjoint_eq}
Let $u \in \mathcal{A}$, and let $y = \mathcal{S}(u)$. Let $r$ be such that
\begin{equation}\label{eq:r}
r \in (1,2) ~ \text{if } d=2, \qquad r \in \left[ \tfrac{6}{5},\tfrac{3}{2}\right) ~ \text{if } d=3, 
\end{equation}
and let $s$ be its H\"older conjugate, i.e., $r^{-1} + s^{-1} = 1$. Note that $s>d$. We introduce the \emph{adjoint problem}: Find $p \in W^{1,r}_{0}(\Omega)$ such that
\begin{equation}\label{eq:weak_adj_eq}
\int_{\Omega}\nabla w \cdot \nabla p \mathrm{d}x +  \int_{\Omega}\left[ \frac{\partial a}{\partial y}(\cdot,y) + u \right]pw \mathrm{d}x = \sum_{t \in \mathcal{D}} \langle (y(t)-y_{t})\delta_{t},w \rangle \quad \forall w \in W^{1,s}_{0}(\Omega).
\end{equation} 
Here, $\langle \cdot, \cdot \rangle$ denotes the duality pairing between $W^{-1,r}(\Omega)$ and $W^{1,s}_{0}(\Omega)$. 

\begin{remark}[on the set $\mathcal{A}$]\label{rmk:redef_A}
To establish the well-posedness of problem \eqref{eq:weak_adj_eq}, we redefine the open set $\mathcal{A}$ as in \cite[(4.8)]{MR5008466}. Under this redefinition, the inclusions $\mathbb{U}_{ad} \subset \mathcal{A}_{0} \subset \mathcal{A}$ continue to hold. From now on, $\mathcal{A}$ denotes this redefined set.
\end{remark}

\begin{theorem}[well-posedness]\label{thm:wp_adjoint}
Let $u \in \mathcal{A}$, and let $y = \mathcal{S}(u)$. Then, the weak problem \eqref{eq:weak_adj_eq} has a unique solution $p \in W^{1,r}_{0}(\Omega)$, which satisfies the stability bound
\begin{equation}
\|\nabla p\|_{L^{r}(\Omega)} \lesssim \|f-a(\cdot,0)\|_{L^{2}(\Omega)} + \sum_{t \in \mathcal{D}}|y_{t}|.
\end{equation} 
The hidden constant depends on $\#\mathcal{D}$ and on $u$, but is independent of $p$; in particular, if $u \in \mathbb{U}_{ad}$, it can be chosen uniformly with respect to $u$.
\end{theorem}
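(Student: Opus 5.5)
The plan is to view \eqref{eq:weak_adj_eq} as a linear elliptic problem with a bounded reaction coefficient and a measure right-hand side. I would first prove well-posedness for $u \in \mathcal{A}_0$, then handle general $u \in \mathcal{A}$ by perturbation, and finally bound the data.

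\emph{Step 1: coefficient and data.} Set $c := \frac{\partial a}{\partial y}(\cdot,y) + u$. Since $y = \mathcal{S}(u) \in L^\infty(\Omega)$, with $\|y\|_{L^\infty(\Omega)} \lesssim \|f-a(\cdot,0)\|_{L^2(\Omega)}$ by Theorem \ref{thm:well-posedness-semilinearPDE} with $q=2>d/2$, assumption \ref{A3} gives $\frac{\partial a}{\partial y}(\cdot,y) \in L^\infty(\Omega)$. Hence $c \in L^2(\Omega)$. By \ref{A2}, if $u \in \mathcal{A}_0$ then $c \geq a_0 + u \geq 0$. The right-hand side $\mu := \sum_{t}(y(t)-y_t)\delta_t$ lies in $W^{-1,r}(\Omega)$, because $W^{1,s}_0(\Omega) \hookrightarrow C(\bar\Omega)$ for $s>d$. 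Its norm satisfies
\[
\|\mu\|_{W^{-1,r}(\Omega)} \lesssim \sum_t |y(t)-y_t| \lesssim \#\mathcal{D}\,\|y\|_{L^\infty(\Omega)} + \sum_t|y_t|.
\]
Combining this with the $L^\infty$ bound on $y$ gives exactly the quantity on the right of the claimed stability estimate.

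\emph{Step 2: solvability for nonnegative coefficients.} For $c \geq 0$ with $c \in L^2(\Omega)$, I would follow the classical Stampacchia/Casas approach for measure data. Regularize $\mu$ by $\mu_k \in L^\infty(\Omega)$ with $\mu_k \rightharpoonup^* \mu$ (mollified Diracs) and $\|\mu_k\|_{L^1(\Omega)} \le \|\mu\|_{\mathcal{M}}$. Let $p_k \in H^1_0(\Omega)$ be the Lax--Milgram solutions, which exist by coercivity since $c \ge 0$. The key estimate is $\|\nabla p_k\|_{L^r(\Omega)} \lesssim \|\mu_k\|_{L^1(\Omega)}$, uniformly in $c\geq 0$. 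I would obtain it by duality: for $g \in L^s(\Omega)^d$, let $\phi\in W^{1,s}_0$ solve $-\Delta\phi + c\phi = -\operatorname{div} g$. Then $\|\phi\|_{L^\infty(\Omega)} \lesssim \|g\|_{L^s(\Omega)}$ by the maximum principle and Stampacchia truncation, valid since $s>d$ and $c\ge0$, and this gives
\[
\int_\Omega \nabla p_k \cdot g \,\mathrm{d}x = \int_\Omega \mu_k\phi \,\mathrm{d}x.
\]
This route avoids requiring $W^{1,s}$ regularity of $\phi$, which could fail on nonconvex Lipschitz polytopes. Passing to the limit by weak compactness in $W^{1,r}_0(\Omega)$ yields a solution. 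Testing with $w \in W^{1,s}_0(\Omega)$ is legitimate because $cpw \in L^1(\Omega)$: indeed $W^{1,r}_0(\Omega) \hookrightarrow L^{2}(\Omega)$ for the given range of $r$ (where $r\ge 6/5$ is used when $d=3$), and $w\in L^\infty(\Omega)$. Uniqueness follows by the same duality argument applied to the difference of two solutions, whose data vanish.

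\emph{Step 3: extension to $\mathcal{A}$ and uniformity.} For $u\in\mathcal{A}$, the redefinition \cite[(4.8)]{MR5008466} presumably consists of $L^2$-neighbourhoods of $\mathcal{A}_0$ on which the perturbed operator remains an isomorphism $W^{1,r}_0(\Omega)\to W^{-1,r}(\Omega)$. I would write $c = c_0 + (u-u_0)$ with $u_0 \in \mathcal{A}_0$ and $\|u-u_0\|_{L^2(\Omega)}$ small, and treat the operator as the invertible operator for $c_0$ plus a compact perturbation (a Neumann series would also do for small perturbations). Consequently the constant depends on $u$. For $u\in\mathbb{U}_{ad}$, Step 2 applies directly, and the constant depends only on $\Omega$, $r$, and $\#\mathcal{D}$, not on $c \ge 0$; this gives the claimed uniformity.

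The main obstacle is the uniform $L^\infty$ estimate for the dual problem with an unbounded-below-free but merely $L^2$ coefficient on a Lipschitz domain. If that causes trouble, I would instead cite the result underlying \cite[Theorem 4.3]{MR5008466} directly.
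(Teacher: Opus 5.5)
The paper does not prove this itself; it imports the result from \cite[Theorems 4.2 and 4.3]{MR5008466}. Your existence argument and the $c$-uniform stability bound are sound. The identity $\int_\Omega \nabla p_k\cdot g\,\mathrm{d}x=\int_\Omega \mu_k\phi\,\mathrm{d}x$ only needs $p_k,\phi\in H^1_0(\Omega)$, so you should write $\phi\in H^1_0(\Omega)$ there, not $\phi\in W^{1,s}_0(\Omega)$. Stampacchia's truncation bound $\|\phi\|_{L^\infty(\Omega)}\lesssim\|g\|_{L^s(\Omega)}$ uses only $c\ge0$, $s>d$, and the Sobolev embedding of $H^1_0(\Omega)$, so it holds on any bounded domain with a constant independent of $c$. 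What you call the main obstacle is therefore not one.

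The genuine gap is \emph{uniqueness}. The difference $\psi$ of two solutions lies only in $W^{1,r}_0(\Omega)$, and it satisfies the homogeneous equation only for test functions $w\in W^{1,s}_0(\Omega)$. To deduce $\int_\Omega\nabla\psi\cdot g\,\mathrm{d}x=0$ you must take $w=\phi$, which requires $\phi\in W^{1,s}_0(\Omega)$. That is exactly the regularity you said your route avoids. In the existence step this was harmless because $p_k\in H^1_0(\Omega)$; for an arbitrary $W^{1,r}_0$-solution it is not.

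This is not a cosmetic omission. The kernel of the $W^{1,r}_0$-problem is the annihilator of the range of $-\Delta+c\colon W^{1,s}_0(\Omega)\to W^{-1,s}(\Omega)$, and without a geometric assumption it can be nontrivial. Take $d=2$, $c=0$, and $\Omega$ with a reentrant corner of angle $\omega\in(\pi,2\pi)$. Use polar coordinates $(\rho,\theta)$ at the corner and a cutoff $\eta$ near it, and let $\psi_0\in H^1_0(\Omega)$ solve $-\Delta\psi_0=-\Delta(\eta\rho^{-\pi/\omega}\sin(\pi\theta/\omega))\in L^\infty(\Omega)$. Then $\psi=\eta\rho^{-\pi/\omega}\sin(\pi\theta/\omega)-\psi_0$ is a nonzero element of $W^{1,r}_0(\Omega)$ for $1<r<2\omega/(\omega+\pi)$ that solves the homogeneous problem.

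So you need an extra input: $-\Delta+c$ must map $W^{1,s}_0(\Omega)$ onto $W^{-1,s}(\Omega)$ for the relevant $s$. This holds, for example, on convex polytopes. There $-\Delta\colon W^{1,s}_0(\Omega)\to W^{-1,s}(\Omega)$ is an isomorphism, and $c\phi\in L^2(\Omega)\hookrightarrow W^{-1,s}(\Omega)$ (since $r\ge 2d/(d+2)$) is a harmless right-hand side. On nonconvex polytopes $r$ must be restricted. Either way, this has to come from the geometry or from the cited result.

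The same gap propagates to Step~3. The Neumann series needs the operator with coefficient $c_0\ge0$ to be an isomorphism $W^{1,r}_0(\Omega)\to W^{-1,r}(\Omega)$, which includes injectivity. The ``compact perturbation'' alternative would additionally need injectivity of the sign-indefinite operator, which you do not establish.

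A minor point: for $u\in\mathcal{A}\setminus\mathcal{A}_0$, Theorem~\ref{thm:well-posedness-semilinearPDE} does not supply the $L^\infty(\Omega)$ bound on $y$. You need a separate, $u$-dependent bound, which the statement permits.
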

\begin{proof}
See \cite[Theorems 4.2 and 4.3]{MR5008466}.
\end{proof}

The following result establishes suitable regularity properties of the adjoint variable $p$. To state this result, we introduce, for $y = \mathcal{S}(u)$, the set
\begin{equation}\label{def:set_E}
\mathcal{E} := \{t \in \mathcal{D}: y(t) \neq y_{t}\}.
\end{equation}
 
\begin{theorem}[local regularity]\label{thm:H^2-reg-p} 
Let $p \in W^{1,r}_{0}(\Omega)$ solve \eqref{eq:weak_adj_eq}, where $y = \mathcal{S}(u)$ and $u \in \mathbb{U}_{ad}$. If $\Omega$ is a convex polytope, then
$
p \in H^{2}(\Omega \setminus \cup_{t \in \mathcal{E}} \bar{B}_{t})\cap C^{0,1}(\bar{\Omega} \setminus \cup_{t \in \mathcal{E}}B_{t}).
$
Here, for each $t \in \mathcal{E}$, $B_{t} \subset \Omega$ is an open ball centered at $t$ with positive radius.
\end{theorem}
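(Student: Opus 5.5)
The plan is a localization (cut-off) argument that separates the Dirac singularities from the rest of the domain and then invokes $H^2$ regularity on the convex polytope. Only the points $t\in\mathcal{E}$ carry a nonzero Dirac mass, because $y(t)-y_t=0$ for $t\in\mathcal{D}\setminus\mathcal{E}$. So the right-hand side of \eqref{eq:weak_adj_eq} is $\sum_{t\in\mathcal{E}}(y(t)-y_t)\delta_t$.

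Fix the balls $B_t$, $t\in\mathcal{E}$, and choose smaller concentric balls $B_t'\Subset B_t$ so that the closures $\bar B_t$ are pairwise disjoint and contained in $\Omega$. Let $\eta\in C^\infty(\bar\Omega)$ satisfy $\eta\equiv 0$ on $\cup_{t\in\mathcal{E}}B_t'$ and $\eta\equiv1$ on $\bar\Omega\setminus\cup_{t\in\mathcal{E}}B_t$. The point is that $\nabla\eta$ and $\Delta\eta$ are supported in the annuli $\bar B_t\setminus B_t'$, and $\eta\,\delta_t=0$ for every $t\in\mathcal{E}$.

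Set $q:=\eta p\in W^{1,r}_0(\Omega)$. Writing $c:=\tfrac{\partial a}{\partial y}(\cdot,y)+u$, which lies in $L^\infty(\Omega)$ by \ref{A3} and since $y\in L^\infty(\Omega)$, $u\in\mathbb{U}_{ad}$, a direct computation in the distributional sense gives
\[
-\Delta q + c\,q = -2\nabla\eta\cdot\nabla p - p\,\Delta\eta =: g \quad\text{in } \Omega, \qquad q=0 \text{ on }\partial\Omega .
\]
To raise the regularity of $g$, first use interior regularity of $p$ away from the Dirac points. On an open set $\mathcal{O}\Subset\Omega\setminus\mathcal{E}$ containing all the annuli, $p$ solves $-\Delta p=-c\,p$ with $p\in L^{r^*}$. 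Iterating interior elliptic regularity (Calderón--Zygmund in $W^{2,m}$, then Sobolev embedding) a finite number of times gives $p\in W^{1,m}$ on the annuli for every $m<\infty$, and in particular $p\in H^1$ there. Hence $g\in L^2(\Omega)$.

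Next, $q\in W^{1,r}_0$ must be shown to be the variational solution in $H^1_0$ of this problem. I would do this through uniqueness for the adjoint-type operator, using the redefined set $\mathcal{A}$ of Remark~\ref{rmk:redef_A} (equivalently Theorem~\ref{thm:wp_adjoint} with a general right-hand side in $W^{-1,r}$). The $H^1_0$ solution is also a $W^{1,r}_0$ solution, so the two coincide. Convexity of $\Omega$ together with the same $H^2$ argument as in Theorem~\ref{thm:state_reg}(ii), namely $-\Delta q=g-cq\in L^2$, then gives $q\in H^2(\Omega)$. Since $q=p$ on $\Omega\setminus\cup_{t\in\mathcal{E}}\bar B_t$, we obtain $p\in H^2(\Omega\setminus\cup_{t\in\mathcal{E}}\bar B_t)$.

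The Lipschitz part $C^{0,1}(\bar\Omega\setminus\cup_{t\in\mathcal{E}}B_t)$ is the main obstacle. $H^2$ gives Lipschitz continuity only for $d=1$, and $W^{2,m}$ regularity with $m>d$ fails near reentrant-free but non-smooth boundary parts (edges and corners of the polytope). The interior region poses no difficulty, since there the bootstrap above already gives $W^{2,m}$ for all $m<\infty$. Near $\partial\Omega$ I would use that on convex polytopes the Dirichlet Laplacian satisfies $W^{1,\infty}$ estimates: for convex domains, $-\Delta q=F\in L^m$ with $m>d$ implies $\nabla q\in L^\infty$ (Fromm / Mazya type results; for convex polyhedra all edge angles are $<\pi$, so the singular exponents exceed $1$). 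Bootstrapping $F=g-cq$ into $L^m$ with $m>d$ uses $q\in H^2\hookrightarrow L^\infty$ and $g\in L^\infty$ from the interior step. This yields $q\in W^{1,\infty}(\Omega)$, and hence $p$ is Lipschitz on $\bar\Omega\setminus\cup_{t\in\mathcal{E}}B_t$, where $q=p$.

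If such a global Lipschitz estimate is not available as a citable result for $d=3$, I would fall back on local $W^{2,m}$ estimates near each edge and vertex with $m$ slightly larger than $3$, which the convexity angles permit.
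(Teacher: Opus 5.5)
Your proof is correct. The paper does not argue this statement itself; it only cites \cite[Theorem 4.5]{MR5008466}. What you give is therefore a self-contained version of the standard cut-off argument, and each step holds:
\begin{itemize}
\item only the masses at $\mathcal{E}$ survive;
\item $\eta p$ solves a problem whose right-hand side $-2\nabla\eta\cdot\nabla p-p\Delta\eta$ lives on annuli, where interior bootstrapping puts $p$ in $W^{2,m}$ for every $m<\infty$;
\item uniqueness in $W^{1,r}_0(\Omega)$ identifies $\eta p$ with the $H^1_0(\Omega)$ solution; this is the general well-posedness result \cite[Theorem 4.2]{MR5008466}, which the paper itself invokes in the proof of Theorem~\ref{thm:aux_error_estimate_adj_state};
\item convexity then yields $H^2(\Omega)$.
\end{itemize}
For the Lipschitz claim, proving $\eta p\in W^{1,\infty}(\Omega)$ globally and restricting to $\bar\Omega\setminus\cup_{t\in\mathcal{E}}B_t$, where $\eta\equiv1$, is the right move, since convexity turns $W^{1,\infty}(\Omega)$ into $C^{0,1}(\bar\Omega)$.

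Three remarks on that last step.

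First, your claim that $W^{2,m}$-regularity with $m>d$ fails at corners is not accurate for $d=2$. On a convex polygon every corner exponent $\pi/\omega$ exceeds $1$, so Grisvard's theory gives $W^{2,m}$ for some $m>2$, and hence $C^1(\bar\Omega)$ directly.

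Second, for $d=3$ your main route is valid and citable. One justification is the Green's function bound $|\nabla_x G(x,\xi)|\lesssim|x-\xi|^{-2}$ on convex polyhedra (Guzm\'an, Leykekhman, Rossmann, Schatz). It gives $\|\nabla q\|_{L^\infty(\Omega)}\lesssim\|F\|_{L^m(\Omega)}$ for $m>3$, because $|x-\cdot|^{-2}\in L^{m'}$ exactly when $m>3$.

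Third, your fallback would \emph{not} work. Near an edge of dihedral angle $\omega$, the leading singular function behaves like $r^{\pi/\omega}$. Its second derivatives lie in $L^m$ only for $m<2/(2-\pi/\omega)$, and this threshold is at most $3$ once $\omega\ge 3\pi/4$, which convex polyhedra allow. At such edges, Lipschitz continuity comes from $|\nabla r^{\pi/\omega}|\sim r^{\pi/\omega-1}$ being bounded, not from a $W^{2,m}$ embedding. The argument must therefore rest on the gradient (Green's function) estimate, not on local $W^{2,m}$ regularity with $m>3$.
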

\begin{proof}
See \cite[Theorem 4.5]{MR5008466}.
\end{proof}
 
We now define the control-to-adjoint state map as follows:
\begin{equation}\label{eq:Phi}
\Phi: \mathcal{A} \rightarrow W^{1,r}_{0}(\Omega), \qquad  u \mapsto \Phi(u) = p,
\end{equation}
where $p$ is the unique solution to \eqref{eq:weak_adj_eq} and $r$ is as in \eqref{eq:r}. Recall that, as stated in Remark \ref{rmk:redef_A}, the open set $\mathcal{A} \subset L^2(\Omega)$ has been redefined. Moreover, given $u \in \mathcal{A}$ and $g \in L^{2}(\Omega)$, the directional derivative $\eta = \Phi'(u)g \in W_{0}^{1,r}(\Omega)$ is the unique solution to
\begin{multline}\label{eq:DPhi}
 \int_{\Omega}\nabla w \cdot \nabla \eta \mathrm{d}x +  \int_{\Omega}\left[ \frac{\partial a}{\partial y}(\cdot,y) + u \right]\eta w \mathrm{d}x \\
= \sum_{t \in \mathcal{D}} \langle z(t)\delta_{t},w \rangle - \int_{\Omega}\left[ \frac{\partial^{2} a}{\partial y^{2}}(\cdot,y)z + g \right]pw \mathrm{d}x
\quad \forall w \in W_0^{1,s}(\Omega),
\end{multline} 
where $r^{-1} + s^{-1} = 1$, $y = \mathcal{S}(u)$, and $z = \mathcal{S}'(u)g$; see \cite[Section 4.2.2]{MR5008466}.


\subsection{First order optimality conditions} 

Since \eqref{eq:weak_ocp}--\eqref{eq:weak_state_eq} is not convex, we study optimality conditions for locally optimal controls. We say that $\bar{u} \in \mathbb{U}_{ad}$ is a \emph{locally optimal control} for \eqref{eq:weak_ocp}--\eqref{eq:weak_state_eq} in the sense of $L^2(\Omega)$ if there exists $\varepsilon > 0$ such that
$
J(\mathcal{S}(\bar{u}),\bar{u}) \leq J(\mathcal{S}(u),u)
$
for all $u \in \mathbb{U}_{ad}$ such that $\| u - \bar{u}\|_{L^{2}(\Omega)} \leq \varepsilon$. We say that $\bar{u} \in \mathbb{U}_{ad}$ is a \emph{strict locally optimal control} in the sense of $L^2(\Omega)$ if there exists $\varepsilon>0$ such that
$
J(\mathcal{S}(\bar{u}),\bar{u}) < J(\mathcal{S}(u),u)
$
for all $u \in \mathbb{U}_{ad} \setminus \{\bar{u}\}$ such that $\| u - \bar{u}\|_{L^{2}(\Omega)} \leq \varepsilon$.

We now introduce the reduced cost functional $j: \mathcal{A} \rightarrow \mathbb{R}$, given by $j(u) := J(\mathcal{S}(u),u)$. Since $\mathcal{S}$ is of class $C^{2}$, it follows that $j$ is of class $C^{2}$. Moreover, for any $u \in \mathcal{A}$ and $g,g_{1},g_{2} \in L^{2}(\Omega)$, the first and second derivatives of $j$ are given by
\begin{align}
\label{eq:Dj}
j'(u)g & = \int_{\Omega}(\alpha u - yp)g \mathrm{d}x,
\\
\label{eq:DDj}
j''(u)(g_{1},g_{2}) & = \int_{\Omega}(\alpha g_{2} - z_{2}p - y\eta_{2})g_{1} \mathrm{d}x = \int_{\Omega}(\alpha g_{1} - z_{1}p - y\eta_{1})g_{2} \mathrm{d}x,
\end{align}
where $z_{i} = \mathcal{S}'(u)g_{i}$, $\eta_{i} = \Phi'(u)g_{i}$, and $i \in \{1,2\}$. Recall that $\mathcal{S}$ and $\Phi$ are defined in Sections \ref{sec:semilinear_eq} and \ref{sec:adjoint_eq}, respectively. These properties follow from \cite[Lemma 4.6]{MR5008466}.
 
We recall the following first order necessary optimality conditions.
\begin{theorem}[necessary first order optimality conditions]
Every locally optimal control $\bar{u} \in \mathbb{U}_{ad}$ satisfies the following variational inequality:
\begin{equation}\label{eq:char_var_ineq}
(\alpha \bar{u}-\bar{y}\bar{p},u-\bar{u})_{L^{2}(\Omega)} \geq 0 \quad \forall u \in \mathbb{U}_{ad},
\qquad
\bar{y} = \mathcal{S}(\bar{u}), \, \bar{p} = \Phi(\bar{u}).
\end{equation}
\end{theorem}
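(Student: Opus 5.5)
The plan is the standard first-order argument for a differentiable reduced functional over a convex admissible set. It rests on three facts already available: $\mathbb{U}_{ad}$ is convex, $\mathbb{U}_{ad}\subset\mathcal{A}$, and the reduced cost $j$ is of class $C^{1}$ (indeed $C^2$) on the open set $\mathcal{A}$, with derivative given by \eqref{eq:Dj}.

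Let $\bar u$ be locally optimal with radius $\varepsilon>0$, and fix an arbitrary $u\in\mathbb{U}_{ad}$. For $\rho\in(0,1]$ set $u_\rho:=\bar u+\rho(u-\bar u)$. By convexity of the box constraints, $u_\rho\in\mathbb{U}_{ad}\subset\mathcal{A}$. Moreover $\|u_\rho-\bar u\|_{L^2(\Omega)}=\rho\|u-\bar u\|_{L^2(\Omega)}\le\varepsilon$ for $\rho$ sufficiently small. Local optimality then gives $j(u_\rho)-j(\bar u)\ge 0$ for all such $\rho$.

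Dividing by $\rho>0$ and letting $\rho\downarrow 0$, the Fréchet differentiability of $j$ at $\bar u$ (which follows from $\mathcal{S}$ being $C^2$ from $\mathcal{A}$ into $H_0^1(\Omega)\cap C^{0,\varsigma}(\bar\Omega)$) yields $j'(\bar u)(u-\bar u)\ge 0$. Finally, we insert the representation \eqref{eq:Dj} with $g=u-\bar u$, $\bar y=\mathcal{S}(\bar u)$ and $\bar p=\Phi(\bar u)$. This gives $(\alpha\bar u-\bar y\bar p,u-\bar u)_{L^2(\Omega)}\ge0$, which is \eqref{eq:char_var_ineq}.

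The only delicate point is justifying \eqref{eq:Dj}, that is, that the adjoint state $\bar p$ defined via the measure-valued problem \eqref{eq:weak_adj_eq} actually represents the derivative of $j$; this is taken from \cite[Lemma 4.6]{MR5008466}. Two ingredients make it work.

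First, the term $\int_\Omega \bar y\bar p\,g$ is well defined. Since $\bar y\in L^\infty(\Omega)$ and $\bar p\in W^{1,r}_0(\Omega)\hookrightarrow L^2(\Omega)$ (for $r\ge 6/5$ when $d=3$, and for any $r>1$ when $d=2$), we have $\bar y\bar p\in L^2(\Omega)$.

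Second, the identity $\sum_{t}(\bar y(t)-y_t)z(t)=-\int_\Omega g\bar y\bar p$, with $z=\mathcal{S}'(\bar u)g$, holds. To obtain it, test \eqref{eq:weak_adj_eq} with $w=z$ and the linearized state equation with $v=\bar p$. Using $z$ as a test function requires $z\in W^{1,s}_0(\Omega)$ with $s>d$, which holds since $g\bar y\in L^2(\Omega)$ and the regularity of Theorem~\ref{thm:state_reg}(i) applies to the linearized equation. Using $\bar p$ as a test function in the linearized equation is not immediate, because $\bar p$ is not in $H^1_0(\Omega)$; I would handle this by a density argument, approximating $\bar p$ by $H_0^1$ functions.

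With these two ingredients, the chain rule gives $j'(\bar u)g=\sum_t(\bar y(t)-y_t)z(t)+\alpha(\bar u,g)_{L^2(\Omega)}$, which is exactly \eqref{eq:Dj}.
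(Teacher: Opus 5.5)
Your argument is correct and is the standard one behind the reference the paper cites for this result (\cite[Theorem 4.7]{MR5008466}): convexity of $\mathbb{U}_{ad}\subset\mathcal{A}$ and differentiability of $j$ give $j'(\bar u)(u-\bar u)\ge 0$, and the representation \eqref{eq:Dj} turns this into \eqref{eq:char_var_ineq}. One small caveat in your sketch of \eqref{eq:Dj}: Theorem~\ref{thm:state_reg}(i) only gives $z\in W^{1,\kappa}_0(\Omega)$ for some $\kappa>4$ ($d=2$) or $\kappa>3$ ($d=3$), not necessarily $\kappa\ge s$, so to test \eqref{eq:weak_adj_eq} with $z$ you should take $r$ close to $d/(d-1)$ so that $s\le\kappa$; this is harmless because the adjoint solution does not depend on which admissible $r$ is chosen.
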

\begin{proof}
See \cite[Theorem 4.7]{MR5008466}.
\end{proof}

\subsection{Second order optimality conditions}

Let $(\bar{y},\bar{p},\bar{u}) \in (H_{0}^{1}(\Omega) \cap C^{0,\varsigma}(\bar{\Omega})) \times W_{0}^{1,r}(\Omega) \times \mathbb{U}_{ad}$ satisfy the first order optimality conditions \eqref{eq:weak_state_eq}, \eqref{eq:weak_adj_eq}, and  \eqref{eq:char_var_ineq}. Define $\bar{\mathfrak{p}}:= \alpha \bar{u} - \bar{y}\bar{p}$. By virtue of \eqref{eq:char_var_ineq}, it follows that, for a.e.~$x\in \Omega$,
\begin{equation}\label{eq:sign_con_j_prime}
     \bar{\mathfrak{p}}(x) \geq 0 \text{ if } \bar{u}(x) = \mathtt{a}, \qquad \bar{\mathfrak{p}}(x) \leq 0  \text{ if } \bar{u}(x) = \mathtt{b}, \qquad \bar{\mathfrak{p}}(x) = 0 \text{ if } \mathtt{a} < \bar{u}(x) < \mathtt{b}.
\end{equation}

We define the \emph{cone of critical directions}
\begin{equation}\label{eq:cone_critical}
C_{\bar{u}} := \left\lbrace g \in L^{2}(\Omega) \text{ satisfying \eqref{eq:sign_cond} and } \bar{\mathfrak{p}}(x) \neq 0 \implies g(x) = 0
\, \,
\mathrm{a.e.}~x \in \Omega \right \rbrace,
\end{equation}
where the condition \eqref{eq:sign_cond} is as follows:
\begin{equation}\label{eq:sign_cond}
    g(x) \geq 0 \text{ a.e. } x \in \Omega \text{ if } \bar{u}(x) = \mathtt{a}, \qquad g(x) \leq 0 \text{ a.e. } x \in \Omega \text{ if } \bar{u}(x) = \mathtt{b}.
\end{equation}

We now present necessary and sufficient second order optimality conditions.

\begin{theorem}[second order conditions]\label{thm:second_order_cond}
If $\bar{u} \in \mathbb{U}_{ad}$ is a locally optimal control, then $j''(\bar{u})g^{2} \geq 0$ for all $g \in C_{\bar{u}}$. Conversely, if $(\bar{y},\bar{p},\bar{u})$ satisfies \eqref{eq:weak_state_eq}, \eqref{eq:weak_adj_eq}, and \eqref{eq:char_var_ineq} and $j''(\bar{u})g^2>0$ for all $g \in C_{\bar{u}} \setminus \left\lbrace 0\right\rbrace$, then there exist $\mu, \sigma >0$ such that
\begin{equation}\label{eq:local_cuad_growth_j}
    j(u) \geq j(\bar{u}) + \frac{\mu}{2}\| u - \bar{u}\|^{2}_{L^{2}(\Omega)} \qquad \forall u \in \mathbb{U}_{ad}: \| u - \bar{u} \|_{L^{2}(\Omega)} \leq \sigma.
\end{equation}
In particular, $\bar{u} \in \mathbb{U}_{ad}$ is a locally optimal control for \eqref{eq:weak_ocp}--\eqref{eq:weak_state_eq} in the sense of $L^2(\Omega)$.
\end{theorem}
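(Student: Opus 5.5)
The plan follows the classical Casas--Tröltzsch two-norm-free approach for bilinear problems. The main tool is a second order Taylor expansion of $j$ around $\bar{u}$, combined with a continuity property of $j''$ of the form
\begin{equation*}
|j''(u)g^2 - j''(\bar u)g^2| \leq \epsilon(\|u-\bar u\|_{L^2(\Omega)})\,\|g\|_{L^2(\Omega)}^2, \qquad \epsilon(\tau) \to 0 \text{ as } \tau \downarrow 0,
\end{equation*}
uniformly for $u\in\mathbb{U}_{ad}$ near $\bar u$. One also needs a weak lower semicontinuity and compactness property: if $g_k\rightharpoonup g$ in $L^2(\Omega)$ and $u_k\to\bar u$, then $\liminf_k j''(u_k)g_k^2 \geq j''(\bar u)g^2$ and $j''(u_k)g_k^2 - \alpha\|g_k\|^2_{L^2(\Omega)} \to j''(\bar u)g^2-\alpha\|g\|^2_{L^2(\Omega)}$.

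Both properties rest on the structure of \eqref{eq:DDj}. We have $j''(u)g^2=\alpha\|g\|^2_{L^2(\Omega)}-\int_\Omega (zp+y\eta)g\,\mathrm{d}x$. The map $g\mapsto z=\mathcal{S}'(u)g$ is compact from $L^2(\Omega)$ into $C^{0,\varsigma}(\bar\Omega)$, since it is bounded into $W^{1,\kappa}(\Omega)$ by Theorem~\ref{thm:state_reg}. Likewise, $g\mapsto\eta$ is compact from $L^2(\Omega)$ into $L^{r^*}(\Omega)$ through $W^{1,r}_0(\Omega)$. For weakly converging $g_k$, the point values $z_k(t)$ therefore converge, and the right-hand side of \eqref{eq:DPhi} converges in $W^{-1,r}(\Omega)$. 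The products $z_kp\,g_k$ and $y\eta_k g_k$ then pass to the limit, because $p\in L^{r^*}(\Omega)\subset L^2(\Omega)$ for the chosen $r$ (this is where \eqref{eq:r} with $r\ge 6/5$ is used when $d=3$). The continuity in $u$ follows from the Lipschitz dependence of $y$, $p$, $z$, $\eta$ on $u$, using \ref{A3} and Theorem~\ref{thm:wp_adjoint} with constants uniform on $\mathbb{U}_{ad}$.

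The necessary condition is the standard one. Take $g\in C_{\bar u}$, truncate to $g_k$ (set $g_k=0$ where $\mathtt{a}+1/k<\bar u<\mathtt{b}-1/k$ fails on the wrong side, and cap $|g_k|\le k$) so that $\bar u+\rho g_k\in\mathbb{U}_{ad}$ for small $\rho$. Since $j'(\bar u)g_k=0$, because $g_k$ vanishes where $\bar{\mathfrak p}\ne0$, local optimality and Taylor expansion give $0\le j(\bar u+\rho g_k)-j(\bar u)=\tfrac{\rho^2}{2}j''(\bar u+\theta\rho g_k)g_k^2$. Dividing by $\rho^2$, letting $\rho\to0$, and then letting $g_k\to g$ in $L^2(\Omega)$ yields $j''(\bar u)g^2\ge0$.

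For sufficiency, argue by contradiction. Suppose there are $u_k\in\mathbb{U}_{ad}$ with $u_k\to\bar u$ and $j(u_k)<j(\bar u)+\tfrac1{2k}\|u_k-\bar u\|^2_{L^2(\Omega)}$. Set $\rho_k=\|u_k-\bar u\|_{L^2(\Omega)}$ and $g_k=(u_k-\bar u)/\rho_k$, and pass to a subsequence with $g_k\rightharpoonup g$. Then $g$ satisfies \eqref{eq:sign_cond}, since that set is closed and convex. Taylor expansion gives $\rho_k j'(\bar u)g_k+\tfrac{\rho_k^2}{2}j''(v_k)g_k^2<\tfrac{\rho_k^2}{2k}$, where $j'(\bar u)g_k\ge0$ by \eqref{eq:char_var_ineq}. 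Hence $j'(\bar u)g\le0$, and since also $j'(\bar u)g=\int\bar{\mathfrak p}g\ge0$ with pointwise sign conditions, we get $\bar{\mathfrak p}g=0$ a.e. Thus $g\in C_{\bar u}$. Next, $j''(v_k)g_k^2<1/k$, and the lower semicontinuity gives $j''(\bar u)g^2\le0$, which forces $g=0$. Finally, using the splitting from the first paragraph, $\alpha=\alpha\|g_k\|^2_{L^2(\Omega)}\le j''(v_k)g_k^2+o(1)-[\,j''(\bar u)0^2-0\,]\to$ a limit that is at most $0$. This is a contradiction, which yields \eqref{eq:local_cuad_growth_j} for some $\mu,\sigma>0$.

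The main obstacle is the compactness and continuity of the term $\int y\,\eta_k g_k$, given the low regularity of $\eta$ and $p$ produced by the Dirac data. I would handle it via the $W^{1,r}\hookrightarrow L^2$ embedding and the uniform-in-$\mathbb{U}_{ad}$ bounds of Theorem~\ref{thm:wp_adjoint}, in the spirit of \cite{MR5008466}.
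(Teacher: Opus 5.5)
Your proposal is correct and follows essentially the route of the proof the paper relies on (\cite[Theorem 4.8]{MR5008466}). Necessity uses the truncation argument, sufficiency uses a contradiction argument, and both are driven by the convergences $y(v_k)\to\bar y$, $z(v_k)\to\bar z$, $p(v_k)\to\bar p$, $\eta(v_k)\to\bar\eta$ together with the splitting $j''(v)g^2=\alpha\|g\|_{L^2(\Omega)}^2-\int_\Omega(zp+y\eta)g\,\mathrm{d}x$; these are exactly the ingredients the paper reuses in \emph{Step 2} of Lemma~\ref{lem:aux_result}. The one step you should make explicit is the strong convergence $\eta_k\to\bar\eta$ in $W^{1,r}_0(\Omega)$: it comes from the compactness of $L^1(\Omega)\hookrightarrow W^{-1,r}(\Omega)$, dual to the compact embedding $W^{1,s}_0(\Omega)\hookrightarrow C(\bar\Omega)$ for $s>d$, applied to the term $g_k\,p(v_k)$, and not from any compactness of $W^{1,r}_0(\Omega)\hookrightarrow L^{r^*}(\Omega)$, which fails at that critical exponent.
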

\begin{proof}
See \cite[Theorem 4.8]{MR5008466}.
\end{proof}

The following result will be important for deriving error bounds.

\begin{theorem}[equivalent second order optimality conditions]
If $(\bar{y},\bar{p},\bar{u})$ satisfies the first order optimality conditions \eqref{eq:weak_state_eq}, \eqref{eq:weak_adj_eq}, and \eqref{eq:char_var_ineq}, then
\begin{equation}\label{eq:equiv_cond}
j''(\bar{u})g^{2} > 0 \quad \forall g \in C_{\bar{u}} \setminus \left\lbrace 0 \right\rbrace
\iff
\exists \mu,\tau > 0: \, j''(\bar{u})g^{2} \geq \mu \|g\|^{2}_{L^{2}(\Omega)} \quad \forall g \in C^{\tau}_{\bar{u}},
 \end{equation}
 where $C_{\bar{u}}^{\tau}:=\left\lbrace g \in L^{2}(\Omega) \text{ satisfying \eqref{eq:sign_cond} and } |\bar{\mathfrak{p}}(x)|>\tau \implies g(x) = 0 \, \, \mathrm{a.e.}~x \in \Omega \right \rbrace$.
\end{theorem}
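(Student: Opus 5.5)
The direction ``$\Leftarrow$'' is immediate. Since $C_{\bar u}\subset C_{\bar u}^{\tau}$ for every $\tau>0$ (if $g\in C_{\bar u}$ and $|\bar{\mathfrak p}(x)|>\tau$, then $\bar{\mathfrak p}(x)\neq0$ and hence $g(x)=0$), we get $j''(\bar u)g^2\ge\mu\|g\|^2_{L^2(\Omega)}>0$ for every $g\in C_{\bar u}\setminus\{0\}$. The real content is ``$\Rightarrow$''. We argue by contradiction, following the classical strategy of Casas--Tr\"oltzsch, adapted to the bilinear, pointwise-tracking setting as in \cite{Casas_bilinear_1,MR5008466}.

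Suppose that no such $\mu,\tau$ exist. Then for every $k\in\mathbb N$ there is $g_k\in C^{1/k}_{\bar u}$ with $j''(\bar u)g_k^2<\frac1k\|g_k\|^2_{L^2(\Omega)}$. Normalize so that $\|g_k\|_{L^2(\Omega)}=1$ and pass to a subsequence with $g_k\rightharpoonup g$ weakly in $L^2(\Omega)$. We first show $g\in C_{\bar u}$.
\begin{itemize}
\item The sign conditions \eqref{eq:sign_cond} define a closed convex set, so they pass to the weak limit.
\item On $\{|\bar{\mathfrak p}|>1/k\}$ we have $g_k=0$. Combined with the sign conditions and \eqref{eq:sign_con_j_prime}, this gives $\bar{\mathfrak p}g_k\ge0$ a.e., and in fact $\int_\Omega|\bar{\mathfrak p}g_k|\,dx=\int_\Omega\bar{\mathfrak p}g_k\,dx\le\frac1k\|g_k\|_{L^1(\Omega)}\to0$.
\item Since $\bar{\mathfrak p}\in L^2(\Omega)$ (because $\bar y\in L^\infty$, $\bar p\in W^{1,r}_0\hookrightarrow L^2$ for our $r$, and $\bar u\in L^\infty$), weak convergence yields $\int_\Omega\bar{\mathfrak p}g\,dx=0$. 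As $\bar{\mathfrak p}g\ge0$ a.e., this forces $g=0$ wherever $\bar{\mathfrak p}\neq0$.
\end{itemize}
Hence $g\in C_{\bar u}$.

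The key step, and the main obstacle, is the weak lower semicontinuity of the quadratic form. We decompose $j''(\bar u)g^2=\alpha\|g\|^2_{L^2(\Omega)}-Q(g)$ with $Q(g):=\int_\Omega(z_gp+y\eta_g)g\,dx$ from \eqref{eq:DDj}, and we must show $Q(g_k)\to Q(g)$.
\begin{itemize}
\item The map $g\mapsto z=\mathcal S'(\bar u)g$ is compact from $L^2(\Omega)$ into $C(\bar\Omega)\cap H^1_0(\Omega)$, since its right-hand side $-g\bar y\in L^2$ and Theorem~\ref{thm:state_reg}-type regularity gives $W^{1,\kappa}\hookrightarrow\hookrightarrow C(\bar\Omega)$. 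Hence $z_k\to z$ uniformly, and the point values $z_k(t)\to z(t)$ converge.
\item For $\eta_k=\Phi'(\bar u)g_k$ we use the structure of \eqref{eq:DPhi}. The Dirac part converges strongly in $W^{1,r}_0$ because only finitely many scalars are involved. The remaining part has right-hand side $-(\partial^2_y a(\cdot,\bar y)z_k+g_k)\bar p$. Here $\bar p\in L^{\rho}$ for some $\rho>2$ when $r$ is close to $d/(d-1)$, so this right-hand side is bounded in $L^{q}$ with $q<2$, and we can choose $q$ with $W^{-1}$-compactness. Thus $\eta_k\to\eta$ strongly in $L^2(\Omega)$.
\item Then $\int \bar y\eta_kg_k\to\int\bar y\eta g$ (strong times weak), and similarly $\int z_k\bar pg_k\to\int z\bar pg$ since $z_k\bar p\to z\bar p$ strongly in $L^2$.
\end{itemize}
If the integrability bookkeeping for $\bar pg_k$ becomes tight in $d=3$, I would first try the duality form: use the symmetric expression in \eqref{eq:DDj} and rewrite $\int\bar y\eta_kg_k$ through the adjoint relation as point values of $z_k$ plus terms quadratic in $z_k$ and $\bar p$, all of which converge by the uniform convergence of $z_k$.

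To conclude, $Q(g_k)\to Q(g)$ and $\liminf\|g_k\|^2\ge\|g\|^2$ give
\[
j''(\bar u)g^2\le\liminf_k j''(\bar u)g_k^2\le0 .
\]
Since $g\in C_{\bar u}$, the hypothesis forces $g=0$. But then $Q(g_k)\to0$, so
\[
\alpha=\alpha\|g_k\|^2=j''(\bar u)g_k^2+Q(g_k),
\]
and the right-hand side has $\limsup\le0$, which contradicts $\alpha>0$. This proves ``$\Rightarrow$''.
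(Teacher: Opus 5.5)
Your proof is correct and is the standard Casas--Tr\"oltzsch contradiction argument. The weak limit of normalized violating directions $g_k\in C^{1/k}_{\bar u}$ lies in $C_{\bar u}$, and $g\mapsto\int_\Omega(z_g\bar p+\bar y\eta_g)g\,\mathrm{d}x$ is weakly sequentially continuous, so $j''(\bar u)$ is a Legendre form. The paper gives no argument of its own and simply cites \cite[Theorem 4.9]{MR5008466}; it invokes the same weak-to-strong convergence of $z$ and $\eta$ in Step 2 of Lemma \ref{lem:aux_result}, and your duality-form fallback is unnecessary, since $\bar p\in L^2(\Omega)$ already suffices because the unit ball of $W^{1,s}_0(\Omega)$ is relatively compact in $C(\bar\Omega)$.
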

\begin{proof}
See \cite[Theorem 4.9]{MR5008466}.
\end{proof}


\section{Regularity results for a locally optimal control}\label{sec:loc_opt_control_reg}
In this section, we establish regularity properties of a locally optimal control, which are essential for deriving error estimates for our bilinear optimal control problem with pointwise tracking.

Define $\Pi_{[\mathtt{a},\mathtt{b}]}: L^{1}(\Omega) \rightarrow \mathbb{U}_{ad}$ by $\Pi_{[\mathtt{a},\mathtt{b}]}(v):= \min\{\mathtt{b},\max\{\mathtt{a},v\}\}$ a.e.~in $\Omega$. A locally optimal control for problem \eqref{eq:weak_ocp}--\eqref{eq:weak_state_eq} admits the characterization \cite[Section 4.6]{MR2583281}:
\begin{equation}\label{eq:u_proj_form}
\bar{u}(x) = \Pi_{[\mathtt{a},\mathtt{b}]}\left(\alpha^{-1}\bar{y}(x)\bar{p}(x) \right) \quad \text{for a.e. } x \in \Omega,
\qquad
\bar{y} = \mathcal{S}(\bar{u}),\, \bar{p} = \Phi(\bar{u}).
\end{equation}

%

\begin{theorem}[$H^1(\Omega)$-regularity of $\bar{u}$]\label{thm:H1_reg}
 If $\bar{u}$ is a locally optimal control of \eqref{eq:weak_ocp}--\eqref{eq:weak_state_eq}, then $\bar{u} \in H^1(\Omega)$.
\end{theorem}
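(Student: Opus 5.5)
The plan is to use the projection formula \eqref{eq:u_proj_form}, $\bar u = \Pi_{[\mathtt a,\mathtt b]}(\alpha^{-1}\bar y\bar p)$. The projection is Lipschitz, and it maps $W^{1,1}$-type spaces into themselves with $\nabla \Pi(v) = \chi_{\{\mathtt a< v<\mathtt b\}}\nabla v$ (Stampacchia). It therefore suffices to show that the product $\bar y\bar p$ belongs to $W^{1,1}(\Omega)$, and that its gradient, after truncation by the indicator of the inactive set, lies in $L^2(\Omega)$. Since $\bar u\in L^\infty(\Omega)$, the $L^2$-part of the $H^1$ norm is trivial; everything reduces to showing $\nabla\bar u\in L^2(\Omega)$.

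\textbf{Step 1: a first Sobolev estimate.} By Theorem \ref{thm:state_reg}(i), $\bar y\in W^{1,\kappa}(\Omega)\cap C^{0,\varsigma}(\bar\Omega)$, and by Theorem \ref{thm:wp_adjoint}, $\bar p\in W^{1,r}_0(\Omega)$ for every $r<d/(d-1)$. The Sobolev embedding gives $\bar p\in L^{q}$ for $q<d/(d-2)$ when $d=3$, and for all $q<\infty$ when $d=2$. The product rule then yields
\[
\nabla(\bar y\bar p)=\bar p\nabla\bar y+\bar y\nabla\bar p\in L^{1}(\Omega),
\]
so $\bar y\bar p\in W^{1,1}(\Omega)$ and $\nabla\bar u=\alpha^{-1}\chi_{\{\mathtt a<\alpha^{-1}\bar y\bar p<\mathtt b\}}\nabla(\bar y\bar p)$ a.e.

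\textbf{Step 2: the key observation.} The obstruction to $L^2$ integrability comes only from $\nabla\bar p$ near the points $t\in\mathcal E$, where $\bar p$ behaves like a fundamental solution: $|\nabla\bar p|\sim|x-t|^{1-d}$, and also $|\bar p|\sim|\log|x-t||$ or $|x-t|^{2-d}$. Near such a point, $|\bar p|\to\infty$. Hence, if $\bar y(t)\neq0$, then $|\bar y\bar p|\to\infty$ (since $\bar y$ is continuous), so $\alpha^{-1}\bar y\bar p\notin(\mathtt a,\mathtt b)$ in a small ball $B_t$. Consequently $\bar u$ is constant there and $\nabla\bar u=0$ on $B_t$. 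Away from $\cup_{t\in\mathcal E}B_t$, Theorem \ref{thm:H^2-reg-p} (or interior regularity without convexity, localized by a cutoff and using that away from the Dirac masses the adjoint equation has an $L^r$-type right-hand side) gives $\bar p\in H^1\cap L^\infty$ there. Then $\nabla(\bar y\bar p)\in L^2$ because $\bar y\in W^{1,\kappa}\cap L^\infty$ with $\kappa>2$.

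\textbf{Step 3: the main obstacle, points with $\bar y(t)=0$.} If $t\in\mathcal E$ has $\bar y(t)=0$, the blow-up argument fails, and I must instead exploit the vanishing of $\bar y$. Here I would use $|\bar y(x)|\le C|x-t|^{\varsigma}$ from Theorem \ref{thm:state_reg}, together with the expansion $\bar p=c\,G(\cdot-t)+\text{(more regular remainder)}$ obtained by subtracting the fundamental solution $G$ times a cutoff. This gives
\[
|\bar y\nabla\bar p|\lesssim|x-t|^{\varsigma+1-d},
\]
which is in $L^2$ near $t$ only if $\varsigma>d/2-1$. That is fine for $d=2$, since $\varsigma>0$. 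For $d=3$ it requires $\varsigma>1/2$, which is attainable since $\kappa$ can be taken larger (for $\bar y\in W^{1,\kappa}$ with $\kappa>6$, $\varsigma=1-3/\kappa>1/2$). Locally, by interior regularity, $\bar y\in H^2_{loc}\subset C^{0,1/2}$, and in fact better, since $\bar y\in W^{2,2}$ gives $\nabla\bar y\in L^6$ locally, so $\bar y$ is locally $C^{0,1/2}$. To obtain a strict improvement I would bootstrap: $\Delta\bar y=a(\cdot,\bar y)+\bar u\bar y-f$ is locally in $L^2$ only, because $f\in L^2$, so I would sharpen via Lipschitz-type bounds if possible.

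The term $\bar p\nabla\bar y$ behaves like $|x-t|^{2-d}|\nabla\bar y|$. For $d=2$ it is in $L^2$ by Hölder with $\nabla\bar y\in L^\kappa$, since the logarithm lies in every $L^q$. For $d=3$ I need $|x-t|^{-1}\nabla\bar y\in L^2$, which follows from a Hardy inequality applied to $\nabla\bar y$, or from Hölder with $|x-t|^{-1}\in L^{3-\epsilon}$ and $\nabla\bar y\in L^{6}$ locally. This $d=3$, $\bar y(t)=0$ case is where I expect the real difficulty. If the exponents do not close, I would fall back on showing that $\bar y\bar p$ stays bounded there, using the cancellation $\bar y(t)=0$ and a Hardy-type inequality for $\bar y/|x-t|$.
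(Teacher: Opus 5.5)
The paper does not prove this statement itself; it defers to \cite[Theorem 5.1]{MR5008466}. Judged on its own, your framework is right: use Stampacchia's formula $\nabla\bar u=\alpha^{-1}\chi\nabla(\bar y\bar p)$, then prove local $L^2$ bounds. It closes for $d=2$ and for points $t\in\mathcal E$ with $\bar y(t)\neq 0$. It does \emph{not} close in the case you flag yourself, namely $d=3$, $t\in\mathcal E$, $\bar y(t)=0$, for the term $\bar y\nabla\bar p$.

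Your claim that $\varsigma>1/2$ is attainable is false under the standing hypothesis $f\in L^2(\Omega)$. Interior regularity stops at $\bar y\in H^2_{\mathrm{loc}}$, i.e.\ $\nabla\bar y\in L^6_{\mathrm{loc}}$ and $\bar y\in C^{0,1/2}_{\mathrm{loc}}$. Getting $\kappa>6$ would require $f\in L^q$ with $q>2$. And $\varsigma=1/2$ is exactly the borderline: $|x-t|^{\varsigma-2}$ then fails to be in $L^2$ near $t$, with a logarithmic divergence.

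The H\"older alternative you give for $\bar p\nabla\bar y$ also fails, since $\tfrac{1}{3-\epsilon}+\tfrac16>\tfrac12$. Only your Hardy route works there: $\int|\nabla\bar y|^2|x-t|^{-2}\lesssim\|\nabla\bar y\|_{H^1}^2$ after a cutoff.

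Your fallback for $\bar y\nabla\bar p$ targets the wrong quantity. Boundedness of $\bar y\bar p$ says nothing about its gradient. A first order Hardy inequality controls $\bar y/|x-t|$, whereas you need $\bar y/|x-t|^{2}\in L^2$ near $t$. As written, this case is open.

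There are two ways to close it.

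(a) Exploit the truncation you already have. On $\{\chi=1\}$ one has $|\bar y\bar p|\le\alpha\max\{|\mathtt a|,|\mathtt b|\}$, so $\chi|\bar y\nabla\bar p|\lesssim|\nabla\bar p|/|\bar p|$. Use your expansion $\bar p=(\bar y(t)-y_t)\psi G(\cdot-t)+R$ with $R\in L^\infty$ and $\nabla R\in L^s$ for all $s<\infty$ near $t$. Since $\bar y(t)\neq y_t$, this gives $|\bar p|\gtrsim|x-t|^{-1}$ in a small ball. Hence $\chi|\bar y\nabla\bar p|\lesssim|x-t|^{-1}+|x-t|\,|\nabla R|\in L^2(B_t)$. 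This argument needs no case distinction on $\bar y(t)$.

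(b) Alternatively, use a second order Hardy (Rellich-type) bound. Since $\bar y\in H^2(B_t)$ and $\bar y(t)=0$, the one-dimensional Hardy inequality along rays from $t$ gives $\int_{B_t}|\bar y|^2|x-t|^{-4}\le 4\int_{B_t}|\nabla\bar y|^2|x-t|^{-2}$. The right-hand side is finite by Hardy applied to $\nabla\bar y\in H^1$. This yields $\bar y\nabla\bar p\in L^2(B_t)$ directly.
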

\begin{proof}
 See \cite[Theorem 5.1]{MR5008466}.
\end{proof}

In \cite{MR5008466}, the global $C^{0,1}(\bar \Omega)$-regularity of a locally optimal control was established only in two dimensions. We show here that this result also holds in three dimensions, under the additional assumptions that $f - a(\cdot,0) \geq 0$ a.e.~in $\Omega$ and that $f \not \equiv a(\cdot,0)$. These assumptions guarantee that $\bar{y} > 0$ in $\Omega$, which is the key ingredient of the three-dimensional argument.

With these assumptions, we can now state the following regularity result for $\bar{u}$.

\begin{theorem}[global Lipschitz regularity of $\bar{u}$]\label{thm:C01_reg_baru}
Assume that $\Omega$ is a convex polytope and that $f,a(\cdot,0) \in L^{\eta}(\Omega)$ for some $\eta  > d$. Let $\bar{u}$ be a locally optimal control. Then, the following global regularity properties hold:
\begin{itemize}
\item[(i)]  If $d=2$ and the lower control bound $\mathtt{a}$ is strictly positive, then $\bar{u} \in C^{0,1}(\bar{\Omega})$.
\item[(ii)] If $d=3$ and $f-a(\cdot,0) \geq 0$ a.e.~in $\Omega$ and $f \not \equiv a(\cdot,0)$, then $\bar{u} \in C^{0,1}(\bar{\Omega})$.
\end{itemize}
\end{theorem}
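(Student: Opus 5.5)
We will work from the projection formula \eqref{eq:u_proj_form}, $\bar{u} = \Pi_{[\mathtt{a},\mathtt{b}]}(\alpha^{-1}\bar{y}\bar{p})$. Since $\Pi_{[\mathtt{a},\mathtt{b}]}$ is Lipschitz with constant one, it suffices to show that $\bar{y}\bar{p}$ is Lipschitz on each of two regions: away from the observation points in $\mathcal{E}$, and in small neighborhoods of them. Away from $\mathcal{E}$ the argument is direct. By \eqref{eq:u_proj_form}, $\bar u \in L^\infty(\Omega)$. Since $f,a(\cdot,0)\in L^\eta(\Omega)$ with $\eta>d$, $W^{2,\eta}$-type (or at least $W^{1,\infty}$) regularity of $\bar y$ on a convex polytope should give $\bar{y}\in C^{0,1}(\bar\Omega)$. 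To obtain this, we would write the state equation as $-\Delta \bar y = f - a(\cdot,\bar y) - \bar u\bar y \in L^\eta(\Omega)$ and invoke Lipschitz regularity for the Dirichlet Laplacian on convex domains with right-hand side in $L^\eta$, $\eta>d$. Theorem \ref{thm:H^2-reg-p} gives $\bar p \in C^{0,1}(\bar\Omega\setminus\cup_{t\in\mathcal E}B_t)$. Hence $\bar y\bar p$ is Lipschitz there, and so is $\bar u$. Points of $\mathcal D\setminus\mathcal E$ carry no Dirac mass and cause no difficulty.

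The main obstacle is the neighborhood $B_t$ of each $t\in\mathcal E$, where $\bar p$ is singular, behaving like $\log|x-t|$ for $d=2$ and like $|x-t|^{-1}$ for $d=3$. The plan is to show that $\bar u$ is constant, equal to $\mathtt a$ or $\mathtt b$, on a small ball around $t$. Locally, $\bar p = (\bar y(t)-y_t)G_t + \text{(smoother remainder)}$, where $G_t$ is the Green's function of the operator $-\Delta + \partial_y a(\cdot,\bar y)+\bar u$ with pole at $t$. Because the coefficient is bounded and nonnegative, $G_t(x)\to+\infty$ as $x\to t$. Therefore $\bar p(x)\to \operatorname{sign}(\bar y(t)-y_t)\cdot\infty$. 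If, in addition, $\bar y(t)\neq 0$, then $\bar y\bar p\to\pm\infty$ near $t$, so $\alpha^{-1}\bar y\bar p$ lies outside $[\mathtt a,\mathtt b]$ on a small ball $B_t'$, where $\bar u$ is therefore constant. Combining with the exterior region, with radii adjusted since Theorem \ref{thm:H^2-reg-p} allows any positive radius, yields global Lipschitz continuity.

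It remains to guarantee $\bar y(t)\neq0$, and this is exactly where the two cases differ. In case (ii), $f-a(\cdot,0)\ge0$ and $f\not\equiv a(\cdot,0)$. We would rewrite the equation as $-\Delta\bar y + c\,\bar y = f-a(\cdot,0)$ with $c=\int_0^1\partial_y a(\cdot,\theta\bar y)\,\mathrm d\theta+\bar u \ge a_0+\mathtt a\ge0$ bounded, and apply the strong maximum principle to obtain $\bar y>0$ in $\Omega$, in particular at every $t\in\mathcal E$. The delicate point in three dimensions is that $\bar p$ is unbounded like $|x-t|^{-1}$, so without this sign information one cannot control $\bar y\bar p$ near a zero of $\bar y$. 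In case (i), with $d=2$ and $\mathtt a>0$, we would follow the argument of \cite{MR5008466}. If $\bar y(t)=0$, then $\bar y$ is Lipschitz and vanishes at $t$ while $\bar p$ grows only logarithmically, so $\bar y\bar p$ is continuous near $t$ and tends to $0$ there. Since $\alpha^{-1}\bar y\bar p\to 0<\mathtt a$, the projection again gives $\bar u\equiv\mathtt a$ near $t$. If $\bar y(t)\neq0$, the blow-up argument above applies.

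For the local Lipschitz estimate near $t$, we first aim to establish the asymptotics of $\bar p$, namely that $\bar p-(\bar y(t)-y_t)\,\Gamma(\cdot-t)$ is bounded near $t$, with $\Gamma$ the fundamental solution. This is the step we would verify with most care.
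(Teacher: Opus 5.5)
Your proposal is correct and follows essentially the same route as the paper. It uses the projection formula and Lipschitz bounds away from $\mathcal{E}$. Near each $t\in\mathcal{E}$, it observes that $\bar y$ solves a linear equation with a bounded, nonnegative reaction coefficient, so the minimum principle gives $\bar y(t)>0$ in case (ii); the blow-up of $\bar p$ then pushes $\alpha^{-1}\bar y\bar p$ outside $[\mathtt a,\mathtt b]$, and $\bar u$ is constant near $t$.

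The differences are cosmetic. The paper phrases the argument through $\nabla\bar u=\alpha^{-1}\nabla(\bar y\bar p)\chi$ and then passes from $W^{1,\infty}(\Omega)$ to $C^{0,1}(\bar\Omega)$ via convexity, which your ``combining'' step implicitly needs. Also, state the weak minimum principle ($\bar y\ge 0$) before applying the strong one.
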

\begin{proof}
We refer the reader to \cite[Theorem 5.2]{MR5008466} for a proof of item $(i)$; we prove item $(ii)$ below.

For the sake of simplicity, we assume that $\mathcal{D} = \mathcal{E} = \{t\}$. Recall that $\mathcal{E} = \{t \in \mathcal{D}: \bar{y}(t) \neq y_{t}\}$. The general case follows by applying the same argument in pairwise disjoint neighborhoods of the finitely many points of $\mathcal{E}$ and using Theorem \ref{thm:H^2-reg-p} away from these points. We use the projection formula in conjunction with \cite[Theorem A.1]{MR1786735} to conclude that $\nabla\bar{u} = \alpha^{-1}\nabla (\bar{y}\bar{p})\chi$ in the sense of distributions. Here, $\chi$ denotes the characteristic function of the set $\{x \in \Omega: \mathtt{a}< \alpha^{-1}\bar{y}(x)\bar{p}(x)<\mathtt{b}\}$. Using this identity, we control $\nabla \bar{u}$ in $L^{\infty}(\Omega)$: $\|\nabla \bar{u}\|_{L^{\infty}(\Omega)} = \alpha^{-1}  \|\nabla(\bar{y}\bar{p})\chi\|_{L^{\infty}(\Omega)}$. Since $\bar{p}$ is singular at $t \in \mathcal{E}$, we bound $\|\nabla(\bar{y}\bar{p})\chi\|_{L^{\infty}(\Omega)}$ both away from and near $t$:
$
\|\nabla \bar{u}\|_{L^{\infty}(\Omega)} \leq \alpha^{-1} \left( \|\nabla(\bar{y}\bar{p})\chi\|_{L^{\infty}(\Omega \setminus \bar{B}_{t}(\rho))} + \|\nabla (\bar{y}\bar{p})\chi\|_{L^{\infty}(B_{t}(\rho))} \right).
$
Here, $B_{t}(\rho)$ is an open ball centered at $t$ with radius $\rho>0$ such that $B_t(\rho)\Subset\Omega$.

From the proof of \cite[Theorem 5.2]{MR5008466}, it follows that $\|\nabla (\bar{y}\bar{p})\chi\|_{L^{\infty}(\Omega \setminus \bar{B}_{t}(\rho))}$ is finite. It thus suffices to control the term $\nabla (\bar{y}\bar{p})\chi$ in $L^{\infty}(B_{t}(\rho))$. The key observation is that $\bar{y}$ can be seen as the solution to a \emph{linear} PDE with a bounded, nonnegative reaction coefficient. Indeed, let us define
\[
 \mathfrak{q}(x) := \int_{0}^{1} \frac{\partial b}{\partial y}(x,s\bar{y}(x)) \, \mathrm{d}s + a_{0}(x) + \bar{u}(x)
 \quad
 \text{a.e.~} x \in \Omega,
\]
where
$
b:\Omega \times \mathbb{R} \rightarrow \mathbb{R}
$
is given by $b(x,y) := a(x,y) - a(x,0) - a_{0}(x)y$ (cf.~\cite[Theorem 2.4]{Casas_bilinear_1}). We immediately note that $b$ satisfies the following two properties. First, $b(x,0) = 0$ for a.e.~$x \in \Omega$. Second, $\partial b/\partial y (x,y) = \partial a / \partial y (x,y) - a_{0}(x) \geq 0 $ for a.e.~$x \in \Omega$ and for all $y \in \mathbb{R}$. Since $b(x,0) = 0$, the fundamental theorem of calculus gives
\begin{equation}
\label{eq:q_identity}
 \mathfrak{q}(x)\bar{y}(x) = b(x,\bar{y}(x)) + \left[a_{0}(x) + \bar{u}(x)\right]\bar{y}(x)
 \quad \text{a.e.~} x \in \Omega.
\end{equation}
The coefficient $\mathfrak{q}$ satisfies two properties. First, $\mathfrak{q} \geq 0$ a.e.~in $\Omega$: the integral defining $\mathfrak{q}$ is nonnegative by the second property of $b$, while  $a_0 + \bar{u} \geq 0$ a.e.~in $\Omega$, because $\bar{u} \in \mathbb{U}_{ad} \subset \mathcal{A}_0$. Second, $\mathfrak{q} \in L^{\infty}(\Omega)$: setting $\mathtt{m}:= \| \bar{y} \|_{L^{\infty}(\Omega)}$ (cf.~Theorem \ref{thm:well-posedness-semilinearPDE}), we have $|s\bar{y}(x)| \leq \mathtt{m}$ for every $s \in [0,1]$, whence assumption \ref{A3} yields $|\partial b/\partial y(x,s\bar{y}(x))| = |\partial a/\partial y(x,s\bar{y}(x)) - a_0(x)| \leq C_{a,\mathtt{m}} + \| a_0 \|_{L^{\infty}(\Omega)}$ for a.e.~$x \in \Omega$ and for every $s \in [0,1]$. Since $\bar{u} \in \mathbb{U}_{ad} \subset L^{\infty}(\Omega)$ and $a_0 \in L^{\infty}(\Omega)$, the claim $\mathfrak{q} \in L^{\infty}(\Omega)$ follows. The state $\bar{y} \in H^{1}_{0}(\Omega) \cap C^{0,\varsigma}(\bar{\Omega})$ can thus be seen as the unique weak solution to
\begin{equation}
 \Delta w - \mathfrak{q}w = -(f-a(\cdot,0)) \text{ in } \Omega, \quad w = 0 \text{ on } \partial \Omega.
 \label{eq:w}
\end{equation}
We now prove that $\bar{y} > 0$ in $\Omega$. Since $f-a(\cdot,0) \geq 0$ and $\mathfrak{q} \geq 0$ in $\Omega$, and $\bar{y} \in H_0^1(\Omega)$, an application of the weak minimum principle \cite[Theorem 8.1]{MR1814364} yields $\bar{y} \geq 0$ in $\Omega$. Since $\bar{y} \in C(\bar{\Omega})$, we thus have $\inf_{\Omega} \bar{y} = 0$. We now proceed by contradiction. Assume that the infimum is attained at an interior point of $\Omega$. An application of the strong minimum principle \cite[Theorem 8.19]{MR1814364} thus shows that $\bar{y}$ is constant in $\Omega$, and since $\bar{y} = 0$ on $\partial \Omega$, we have $\bar{y} \equiv 0$ in $\Omega$. In view of \eqref{eq:w}, we conclude that $f-a(\cdot,0) \equiv 0$, which contradicts the assumption $f-a(\cdot,0) \not \equiv 0$. We have proved that $\bar{y} > 0$ in $\Omega$.

Since $t\in\mathcal{D}\subset\Omega$, $\bar{y}(t) > 0$. Therefore, the arguments developed in \cite[Theorem 5.2, Case 1]{MR5008466} yield $\|\nabla (\bar{y}\bar{p})\chi\|_{L^{\infty}(B_{t}(\rho))} < \infty$. Consequently, $\bar{u} \in W^{1,\infty}(\Omega)$. Since $\Omega$ is convex, \cite[Theorem 4.1]{MR2177410} implies that $\bar{u} \in C^{0,1}(\bar{\Omega})$, which yields the desired result.
\end{proof}


\section{Finite element approximation}\label{sec:finite_element_disc}
We now introduce the discrete setting used throughout the finite element analysis \cite{CiarletBook,MR2050138,MR2373954}. Let $\mathscr{T}_{h} = \left\lbrace T \right\rbrace$ be a conforming partition, or mesh, of $\bar \Omega$ into closed simplices $T$ of size $h_{T}=\operatorname{diam}(T)$. Here, $h:= \max \{ h_{T}: T \in \mathscr{T}_{h} \}$. We denote by $\mathbb{T} = \left\lbrace \mathscr{T}_{h}\right\rbrace_{h>0}$ a collection of conforming, quasi-uniform refinements of an initial mesh $\mathscr{T}_{0}$. Given a mesh $\mathscr{T}_{h}$, we introduce
\begin{equation}\label{eq:Vh}
    \mathbb{V}_{h} := \left\lbrace v_{h} \in C(\bar{\Omega}): v_{h}|_{T} \in \mathbb{P}_{1}(T) \, \forall T \in \mathscr{T}_{h}\right\rbrace \cap H^{1}_{0}(\Omega).
\end{equation} 
\subsection{Finite element discretization of semilinear PDEs}
\label{sec:state_finite_element}
Let $\mathtt{u} \in \mathcal{A}_{0}$, and let $\mathtt{f} \in L^{2}(\Omega)$. Recall that the set $\mathcal{A}_{0}$ is defined in \eqref{eq:mathcal_A_0}. We define the Galerkin approximation of the solution $\mathtt{y}$ of problem \eqref{eq:weak_semi_bilinear_eq} by
\begin{equation}\label{eq:weak_semi_bilinear_eq_discr}
\mathtt{y}_{h} \in \mathbb{V}_{h}:
\quad
\int_{\Omega}\nabla \mathtt{y}_{h} \cdot \nabla v_{h}\mathrm{d}x
+
\int_{\Omega}a(\cdot,\mathtt{y}_{h})v_{h}\mathrm{d}x
+
\int_{\Omega}\mathtt{u}\mathtt{y}_{h}v_{h}\mathrm{d}x = \int_{\Omega}\mathtt{f}v_{h}\mathrm{d}x
\end{equation}
for all $v_{h} \in \mathbb{V}_{h}$. Here, $a$ satisfies \ref{A1}--\ref{A3}. An application of Brouwer's fixed point theorem yields the existence of a solution to \eqref{eq:weak_semi_bilinear_eq_discr} (see \cite[Lemma 8.2.1]{thesis_mateos}), while uniqueness follows from \ref{A2} and the fact that $\mathtt{u} \in \mathcal{A}_{0}$. Moreover, we have the stability bound $\|\nabla \mathtt{y}_{h}\|_{L^{2}(\Omega)} \lesssim \|\mathtt{f}-a(\cdot,0)\|_{H^{-1}(\Omega)}$, which follows by setting $v_h = \mathtt{y}_{h}$ in \eqref{eq:weak_semi_bilinear_eq_discr}. We refer the reader to \cite[Section 8.2]{thesis_mateos} and \cite[Section 7]{MR3586845} for further details.

\subsubsection{Error bounds} We present the following error bounds.

\begin{theorem}[error bounds]\label{thm:priori_state_estimates}
Assume that $\Omega$ is a convex polytope and that $a$ satisfies \ref{A1}--\ref{A3}. Let $\mathtt{u} \in \mathcal{A}_{0}$ and $\mathtt{f} \in L^{2}(\Omega)$, and let $\mathtt{y} \in H^{1}_{0}(\Omega) \cap H^{2}(\Omega)$ and $\mathtt{y}_{h} \in \mathbb{V}_{h}$ solve \eqref{eq:weak_semi_bilinear_eq} and \eqref{eq:weak_semi_bilinear_eq_discr}, respectively. If $h$ is sufficiently small, then
\begin{align}
\label{eq:state_priori_estimate_L^2}
\|\mathtt{y}-\mathtt{y}_{h}\|_{L^{2}(\Omega)} & \lesssim h^{2} \|\mathtt{f}-a(\cdot,0)\|_{L^{2}(\Omega)},
\\
\label{eq:state_priori_estimate_L^infty}
 \|\mathtt{y}-\mathtt{y}_{h}\|_{L^{\infty}(\Omega)} & \lesssim h^{2-\frac{d}{2}} \|\mathtt{f}-a(\cdot,0)\|_{L^{2}(\Omega)}.
\end{align}
Let $\Omega_{1} \Subset \Omega_{0} \Subset \Omega$, with $\Omega_{0}$ smooth. If $\mathtt{f},a(\cdot,0) \in L^{\infty}(\Omega_{0})$ and $\mathtt{u} \in \mathbb{U}_{ad}$, then
\begin{equation}\label{eq:state_local_L^infty_est}
\| \mathtt{y} - \mathtt{y}_{h} \|_{L^{\infty}(\Omega_{1})} \lesssim h^{2}|\log h|^{2}.
\end{equation}
In all estimates, the hidden constants depend on $\mathtt{u}$ but are independent of $h$; if, in addition, $\mathtt{u} \in \mathbb{U}_{ad}$, they can be chosen uniformly with respect to $\mathtt{u}$.
\end{theorem}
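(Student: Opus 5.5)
The plan is to reduce the semilinear error analysis to linear estimates via a linearization argument, and then handle the local $L^\infty$ bound by interior (local) pointwise estimates.

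\emph{Global $L^2$ and $L^\infty$ bounds.} Introduce the auxiliary linear Galerkin projection $\hat{\mathtt{y}}_h \in \mathbb{V}_h$ defined by
\[
(\nabla \hat{\mathtt{y}}_h,\nabla v_h) + (\mathtt{q}\,\hat{\mathtt{y}}_h, v_h) = (\nabla \mathtt{y},\nabla v_h) + (\mathtt{q}\,\mathtt{y},v_h) \quad \forall v_h \in \mathbb{V}_h.
\]
Here $\mathtt{q} := \int_0^1 \partial_y b(\cdot,s\mathtt{y}) \,\mathrm{d}s + a_0 + \mathtt{u} \ge 0$, as in the proof of Theorem \ref{thm:C01_reg_baru}. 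This is the Ritz projection for a coercive linear operator with bounded, nonnegative reaction term. Since $\Omega$ is convex and $\mathtt{y} \in H^2(\Omega)$ (Theorem \ref{thm:state_reg}(ii)), the standard Aubin--Nitsche argument gives $\|\mathtt{y}-\hat{\mathtt{y}}_h\|_{L^2} \lesssim h^2\|\mathtt{y}\|_{H^2}$. Combining the interpolation estimate $\|\mathtt{y}-I_h\mathtt{y}\|_{L^\infty}\lesssim h^{2-d/2}\|\mathtt{y}\|_{H^2}$ with an inverse inequality and the $L^2$ bound then yields $\|\mathtt{y}-\hat{\mathtt{y}}_h\|_{L^\infty}\lesssim h^{2-d/2}\|\mathtt{y}\|_{H^2}$. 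By Theorem \ref{thm:state_reg}(ii) and $\mathtt{u}\in L^\infty$, $\|\mathtt{y}\|_{H^2}\lesssim \|\mathtt{f}-a(\cdot,0)\|_{L^2}$.

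Next I compare $\mathtt{y}_h$ with $\hat{\mathtt{y}}_h$. Set $e_h := \hat{\mathtt{y}}_h - \mathtt{y}_h$. Subtracting the discrete equations and using the mean value theorem gives
\[
(\nabla e_h,\nabla v_h) + ((\tilde{\mathtt{q}}+\mathtt{u})e_h,v_h) = (a(\cdot,\hat{\mathtt{y}}_h) - a(\cdot,\mathtt{y}) - \partial_y a(\cdot,\mathtt{y})(\hat{\mathtt{y}}_h-\mathtt{y}),v_h) + \text{(linear consistency terms)},
\]
with $\tilde{\mathtt{q}}\ge a_0$. The linear consistency terms involve $(\mathtt{y}-\hat{\mathtt{y}}_h)$ multiplied by bounded coefficients. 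The cleaner route is to write the full error equation for $\mathtt{y}-\mathtt{y}_h$ directly: test the difference of \eqref{eq:weak_semi_bilinear_eq} and \eqref{eq:weak_semi_bilinear_eq_discr} and use monotonicity (\ref{A2} together with $\mathtt{u}\in\mathcal{A}_0$). This yields the energy estimate $\|\nabla(\mathtt{y}-\mathtt{y}_h)\|_{L^2}\lesssim h\|\mathtt{y}\|_{H^2}$. A duality argument with the linearized adjoint operator $-\Delta + \partial_y a(\cdot,\mathtt{y}) + \mathtt{u}$ then gives the $L^2$ rate. The remainder term $a(\cdot,\mathtt{y}_h)-a(\cdot,\mathtt{y})-\partial_y a(\cdot,\mathtt{y})(\mathtt{y}_h-\mathtt{y})$ is quadratic by \ref{A3}, provided $\mathtt{y}_h$ is uniformly bounded in $L^\infty$. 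This $L^\infty$ bound on $\mathtt{y}_h$ is the main technical point. I would obtain it by bootstrapping: from the energy estimate and the interpolation/inverse estimates, $\|\mathtt{y}_h - I_h\mathtt{y}\|_{L^\infty}\lesssim h^{1-d/2}\cdot h\,\|\mathtt{y}\|_{H^2}$, which gives boundedness for $d\le 3$ once $h$ is small. This is why the statement requires $h$ sufficiently small. The quadratic remainder is then $\lesssim\|\mathtt{y}-\mathtt{y}_h\|_{L^2}\|\mathtt{y}-\mathtt{y}_h\|_{L^\infty}$ and can be absorbed. The $L^\infty$ estimate \eqref{eq:state_priori_estimate_L^infty} follows from the triangle inequality through $I_h\mathtt{y}$ together with an inverse estimate applied to $\|I_h\mathtt{y}-\mathtt{y}_h\|_{L^2}\lesssim h^2$.

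\emph{Local estimate \eqref{eq:state_local_L^infty_est}.} Since $\mathtt{f},a(\cdot,0)\in L^\infty(\Omega_0)$, $\mathtt{u}\in L^\infty$, and $\mathtt{y}\in L^\infty$, interior regularity gives $\Delta\mathtt{y}\in L^\infty(\Omega_0)$. Hence $\mathtt{y}\in W^{2,p}(\Omega_{0}')$ for all $p<\infty$ on an intermediate smooth $\Omega_1\Subset\Omega_0'\Subset\Omega_0$, with $\|\mathtt{y}\|_{W^{2,p}}\lesssim p$. I would then apply the interior maximum-norm estimates of Schatz--Wahlbin type, extended to a linear operator with $L^\infty$ reaction coefficient, to the linearized error equation:
\[
\|\mathtt{y}-\mathtt{y}_h\|_{L^\infty(\Omega_1)}\lesssim |\log h|\min_{\chi\in\mathbb{V}_h}\|\mathtt{y}-\chi\|_{L^\infty(\Omega_0')} + \|\mathtt{y}-\mathtt{y}_h\|_{L^2(\Omega_0')} + \|\text{nonlinear remainder}\|.
\]
The best-approximation term is at most $h^{2}\|\mathtt{y}\|_{W^{2,\infty}}$ if that norm is available. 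Otherwise, using $W^{2,p}$ with $p\sim|\log h|$ gives $h^{2-d/p}p\lesssim h^2|\log h|$, so the total is $h^2|\log h|^2$. The $L^2$ term is $O(h^2)$ by \eqref{eq:state_priori_estimate_L^2}. The nonlinear remainder is quadratic, hence of higher order. I expect the main obstacle to be justifying the local pointwise estimate for the semilinear, bilinear-reaction problem with only $L^\infty$ coefficients, and carefully tracking the logarithmic factors. Uniformity of all constants in $\mathtt{u}\in\mathbb{U}_{ad}$ follows because every bound depends on $\mathtt{u}$ only through $\mathtt{a}$, $\mathtt{b}$, and $a_0$.
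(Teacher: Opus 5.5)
Your overall route is the standard one the paper imports by citation, and your duality, inverse-estimate and local steps are sound. As written, however, the global part is circular. You claim the energy bound $\|\nabla(\mathtt{y}-\mathtt{y}_h)\|_{L^2(\Omega)}\lesssim h\|\mathtt{y}\|_{H^2(\Omega)}$ from testing the error equation for $\mathtt{y}-\mathtt{y}_h$ and using monotonicity, and only afterwards bootstrap it into $\|\mathtt{y}_h\|_{L^\infty(\Omega)}\lesssim 1$. But a C\'ea-type argument on $\mathtt{y}-\mathtt{y}_h$ produces the term $(a(\cdot,\mathtt{y})-a(\cdot,\mathtt{y}_h),\mathtt{y}-I_h\mathtt{y})$. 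Since \ref{A3} only gives local bounds and $a$ has no growth condition, this term is controlled only once $\|\mathtt{y}_h\|_{L^\infty(\Omega)}$ is already known to be bounded. The a priori $H^1$-stability of $\mathtt{y}_h$ does not supply that bound: discrete Sobolev inequalities give only $h^{-1/2}$ for $d=3$ and $|\log h|^{1/2}$ for $d=2$.

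The repair is to apply monotonicity to $w_h:=\mathtt{y}_h-I_h\mathtt{y}$. Subtracting the two equations gives, for all $v_h\in\mathbb{V}_h$,
\begin{multline*}
(\nabla w_h,\nabla v_h)+(a(\cdot,\mathtt{y}_h)-a(\cdot,I_h\mathtt{y})+\mathtt{u}w_h,v_h)
\\
=(\nabla(\mathtt{y}-I_h\mathtt{y}),\nabla v_h)+(a(\cdot,\mathtt{y})-a(\cdot,I_h\mathtt{y})+\mathtt{u}(\mathtt{y}-I_h\mathtt{y}),v_h).
\end{multline*}
Take $v_h=w_h$. The left-hand side is at least $\|\nabla w_h\|^2_{L^2(\Omega)}$ by \ref{A2} and $a_0+\mathtt{u}\geq 0$. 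On the right-hand side, \ref{A3} is used only on $[-M,M]$ with $M=\|\mathtt{y}\|_{L^\infty(\Omega)}\geq\|I_h\mathtt{y}\|_{L^\infty(\Omega)}$. Hence $\|\nabla w_h\|_{L^2(\Omega)}\lesssim h\|\mathtt{y}\|_{H^2(\Omega)}$ with no information on $\mathtt{y}_h$. After that, your bootstrap, the duality argument with the quadratic remainder, and the inverse estimate all go through. Your discarded $\hat{\mathtt{y}}_h$ would work equally well as the comparison function, since linear theory bounds it in $L^\infty(\Omega)$.

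Three smaller points. First, for the global bounds $\mathtt{u}\in\mathcal{A}_0$ lies only in $L^2(\Omega)$, so do not invoke $\mathtt{u}\in L^\infty(\Omega)$ there. Theorem~\ref{thm:state_reg}(ii) still bounds $\|\mathtt{y}\|_{H^2(\Omega)}$, and the dual solution is in $H^2(\Omega)$ once you first show it is bounded. Second, in two dimensions the discrete Sobolev factor is $|\log h|^{1/2}$, not $h^{1-d/2}=1$; this is harmless. Third, the obstacle you flag in the local estimate, namely extending Schatz--Wahlbin to $L^\infty$ coefficients, can be avoided. Split the error through the Ritz projection $R_h$ of $-\Delta$. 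Then $R_h\mathtt{y}-\mathtt{y}_h$ is a discrete Poisson solution with right-hand side $O(h^2)$ in $L^2(\Omega)$, hence $O(h^2)$ in $L^\infty(\Omega)$. Schatz--Wahlbin is then needed only for $\mathtt{y}-R_h\mathtt{y}$ and the Laplacian.
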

\begin{proof}
The bounds \eqref{eq:state_priori_estimate_L^2} and \eqref{eq:state_priori_estimate_L^infty} follow essentially from the arguments in the proofs of \cite[Lemma 4]{MR2009948} and \cite[Theorem 1]{MR2009948}, respectively. Finally, \eqref{eq:state_local_L^infty_est} follows from slight modifications of the arguments in the proof of \cite[Theorem 4.1]{MR4438718}.
\end{proof}
%
\subsubsection{The discrete control-to-state map}\label{sec:discrete_Sh_mapping}

We now examine differentiability properties of the map $\mathtt{u} \mapsto \mathtt{y}_{h}$, where $\mathtt{y}_{h}$ denotes the solution of \eqref{eq:weak_semi_bilinear_eq_discr}. These properties are necessary to derive first order optimality conditions for the proposed schemes.

\begin{theorem}[differentiability of $\mathtt{u} \mapsto \mathtt{y}_{h}$]\label{thm:diff_dicrete_S}
Let $a: \Omega \times \mathbb{R} \rightarrow \mathbb{R}$ satisfy \ref{A1}--\ref{A3}. Then, for every $h>0$, there exists an open set $\mathcal{A}_{h} \subset L^{2}(\Omega)$ such that $\mathcal{A}_{0} \subset \mathcal{A}_{h} \subset \mathcal{A}$ and, for all $\mathtt{u} \in \mathcal{A}_{h}$, problem \eqref{eq:weak_semi_bilinear_eq_discr}  has a unique solution $\mathtt{y}_{h} \in \mathbb{V}_{h}$. Moreover, there exists a map $\mathcal{S}_{h}: \mathcal{A}_{h} \rightarrow \mathbb{V}_{h}$ of class $C^{2}$ that satisfies the following properties:
\begin{itemize}[leftmargin=*,nosep]
\item[(i)] $\mathcal{S}_{h}(\mathtt{u}) = \mathtt{y}_{h} \in \mathbb{V}_{h}$, where $\mathtt{y}_{h}$ is the unique solution to \eqref{eq:weak_semi_bilinear_eq_discr},
\item[(ii)] for every $\mathtt{g} \in L^{2}(\Omega)$,
$\mathtt{z}_{h} = \mathcal{S}'_{h}(\mathtt{u})\mathtt{g} \in \mathbb{V}_{h}$ is the unique solution to the problem
\begin{equation}
\label{eq:z_h_equation}
\int_{\Omega} \nabla \mathtt{z}_{h} \cdot \nabla v_{h} \mathrm{d}x + \int_{\Omega}\left[ \frac{\partial a}{\partial y}(\cdot,\mathtt{y}_{h}) + \mathtt{u} \right]\mathtt{z}_{h}v_{h}\mathrm{d}x = -\int_{\Omega} \mathtt{g} \mathtt{y}_{h}v_{h} \mathrm{d}x
\quad
\forall v_{h} \in \mathbb{V}_{h}.
\end{equation}
\end{itemize}
\end{theorem}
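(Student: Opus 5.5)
We follow the continuous construction of \cite[Theorem 3.5]{MR5008466}, adapted to the finite-dimensional space $\mathbb{V}_{h}$. The core tool is the implicit function theorem applied to
\[
F_{h}: \mathbb{V}_{h}\times L^{2}(\Omega)\to \mathbb{V}_{h}', \qquad \langle F_{h}(y_{h},\mathtt{u}),v_{h}\rangle := (\nabla y_{h},\nabla v_{h})_{L^2(\Omega)} + (a(\cdot,y_{h}) + \mathtt{u}y_{h} - \mathtt{f},v_{h})_{L^2(\Omega)} .
\]

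\emph{Step 1: smoothness of $F_h$.} We show that $F_{h}$ is of class $C^{2}$. Since $\mathbb{V}_{h}$ is finite dimensional, $\mathbb{V}_{h}\hookrightarrow L^{\infty}(\Omega)$ and all norms on it are equivalent. Consequently, on bounded subsets of $\mathbb{V}_{h}$ the Nemytskii operator $y_{h}\mapsto a(\cdot,y_{h})$ is $C^{2}$ into $L^{2}(\Omega)$, by \ref{A1} and \ref{A3}; the local Lipschitz bound on $\partial^2 a/\partial y^2$ gives the required continuity of the second derivative. The term $(\mathtt{u},y_h)\mapsto \mathtt{u}y_h$ is a continuous bilinear map $L^{2}(\Omega)\times\mathbb{V}_{h}\to L^{2}(\Omega)$, hence smooth.

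\emph{Step 2: invertibility of the linearization at points of $\mathcal{A}_0$.} The partial derivative $\partial_{y}F_{h}(y_{h},\mathtt{u})$ is the operator associated with the bilinear form $(\nabla z_{h},\nabla v_{h}) + ([\partial a/\partial y(\cdot,y_{h})+\mathtt{u}]z_{h},v_{h})$. For $\mathtt{u}\in\mathcal{A}_{0}$, assumption \ref{A2} gives $\partial a/\partial y(\cdot,y_{h})+\mathtt{u}\ge a_{0}+\mathtt{u}\ge0$, so the form is coercive on $\mathbb{V}_{h}$ and the operator is an isomorphism. By the implicit function theorem, each $\mathtt{u}_0\in\mathcal{A}_{0}$, with discrete solution $\mathtt{y}_{h,0}$ (which exists and is unique by Brouwer's theorem and \ref{A2}, as noted above), has an $L^{2}$-ball $B_{\mathtt{u}_0}$ and a $C^{2}$ map $B_{\mathtt{u}_0}\ni \mathtt{u}\mapsto y_{h}(\mathtt{u})$ solving \eqref{eq:weak_semi_bilinear_eq_discr}.

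\emph{Step 3: gluing into a global map on $\mathcal{A}_h$ (main obstacle).} We want to set $\mathcal{A}_{h}:=\mathcal{A}\cap\bigcup_{\mathtt{u}_0\in\mathcal{A}_0}B_{\mathtt{u}_0}$, and this requires uniqueness of discrete solutions on these balls for $\mathtt{u}\notin\mathcal{A}_{0}$, where coercivity is lost. We would mimic the construction of $\mathcal{A}$ in \cite{MR5008466}: first take $B_{\mathtt{u}_0}$ of radius $\epsilon$ small, then exploit the following perturbation. For $\mathtt{u}$ with $\|\mathtt{u}-\mathtt{u}_0\|_{L^{2}(\Omega)}<\epsilon$, we have
\[
\Bigl|\int_{\Omega}(\mathtt{u}-\mathtt{u}_0)z_h^2\,\mathrm{d}x\Bigr|\le \epsilon\|z_h\|_{L^{4}(\Omega)}^2\lesssim\epsilon\|\nabla z_h\|_{L^2(\Omega)}^2 ,
\]
uniformly in $h$ via the Sobolev embedding. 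Hence the linearized form remains coercive, and any two solutions $y_h^1,y_h^2$ can be compared through the mean value form
\[
\int_{\Omega}\nabla(y_h^1-y_h^2)\cdot\nabla v_h\,\mathrm{d}x+\int_{\Omega}\Bigl[\int_0^1\frac{\partial a}{\partial y}(\cdot,y_h^2+\theta(y_h^1-y_h^2))\,\mathrm{d}\theta+\mathtt{u}\Bigr](y_h^1-y_h^2)v_h\,\mathrm{d}x=0 ,
\]
whose coefficient is at least $a_0+\mathtt{u}_0+(\mathtt{u}-\mathtt{u}_0)$. This forces $y_h^1=y_h^2$. Existence for such $\mathtt{u}$ follows likewise from a Brouwer/a priori bound argument using the same coercivity. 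Intersecting with $\mathcal{A}$ then gives $\mathcal{A}_0\subset\mathcal{A}_h\subset\mathcal{A}$.

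\emph{Step 4: the derivative formula.} With uniqueness in hand, the local $C^{2}$ maps agree on overlaps and define $\mathcal{S}_{h}$, which gives (i). Differentiating the identity $F_{h}(\mathcal{S}_{h}(\mathtt{u}),\mathtt{u})=0$ in the direction $\mathtt{g}$ yields \eqref{eq:z_h_equation}, and the unique solvability of \eqref{eq:z_h_equation} is exactly the invertibility established in Steps 2 and 3, which gives (ii).
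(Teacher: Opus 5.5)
Your proposal is correct and takes essentially the same route as the paper. Both arguments apply the implicit function theorem at points of $\mathcal{A}_0$ and define $\mathcal{A}_h$ as the union of small $L^2(\Omega)$-balls centered at points of $\mathcal{A}_0$, intersected with $\mathcal{A}$; the radius restriction $\varepsilon < C_{4\hookrightarrow 2}^{-2}$ in \eqref{def:A_h} is exactly what your $L^4(\Omega)$ Sobolev perturbation argument needs to keep coercivity, uniqueness of the discrete solution, and unique solvability of \eqref{eq:z_h_equation} (state the smallness explicitly as $\varepsilon\, C_{4\hookrightarrow 2}^{2} < 1$ rather than hiding the constant in $\lesssim$).
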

\begin{proof}
The proof follows from arguments similar to those developed in the proof of \cite[Theorem 4.1]{MR4956345}. In our setting, the open set $\mathcal{A}_{h}$ is defined as follows:
\begin{equation}\label{def:A_h}
\mathcal{A}_{h} \subset L^{2}(\Omega),
\quad
\mathcal{A}_{h}:= \bigcup \left \{ B_{\varepsilon^{h}_{\bar{\mathtt{u}}}}(\bar{\mathtt{u}}): \bar{\mathtt{u}} \in \mathcal{A}_{0} \right \} \cap \mathcal{A}, \quad 0 < \varepsilon^{h}_{\bar{\mathtt{u}}} < C^{-2}_{4 \hookrightarrow 2}.
\end{equation}
Here, $B_{\varepsilon^{h}_{\bar{\mathtt{u}}}}(\bar{\mathtt{u}})$ is the open ball in $L^{2}(\Omega)$ centered at $\bar{\mathtt{u}} \in \mathcal{A}_{0}$ of radius $\varepsilon^{h}_{\bar{\mathtt{u}}}$ and $C_{4 \hookrightarrow 2}$ is the best constant in the Sobolev embedding $H^{1}_{0}(\Omega) \hookrightarrow L^{4}(\Omega)$. Finally, the well-posedness of problem \eqref{eq:z_h_equation} follows from the arguments presented in \cite[Remark 3.6]{MR5008466}.
\end{proof}
%
\subsection{Finite element discretization of the adjoint equation}

Let $u \in \mathcal{A}_h$ and $y = \mathcal{S}(u)$. We define the Galerkin approximation of the solution $p$ to the adjoint equation \eqref{eq:weak_adj_eq} by
\begin{equation}\label{eq:weak_adj_eq_discr}
q_{h} \in \mathbb{V}_{h}: \int_{\Omega}\nabla w_{h} \cdot \nabla q_{h}\mathrm{d}x + \int_{\Omega}\left[\frac{\partial a}{\partial y}(\cdot,y)+ u\right]q_{h}w_{h}\mathrm{d}x = \sum_{t \in \mathcal{D}}\langle (y(t) - y_{t})\delta_{t},w_{h} \rangle
\end{equation}
for all $w_{h} \in \mathbb{V}_{h}$. If $u \in \mathcal{A}_0$, it is clear that \eqref{eq:weak_adj_eq_discr} has a unique solution. If $u \in \mathcal{A}_h \setminus \mathcal{A}_0$, the definition of $\mathcal{A}_h$ yields the existence of $\bar{u} \in \mathcal{A}_0$ such that $\| u - \bar{u} \|_{L^2(\Omega)} < C_{4 \hookrightarrow 2}^{-2}$. Standard computations, as in \cite[Remark 3.6]{MR5008466}, show that \eqref{eq:weak_adj_eq_discr} also has a unique solution in this case.

\subsubsection{Error bounds}
We present the following error bounds.

\begin{theorem}[error bounds]\label{thm:error_estimate_p}
Assume that $\Omega$ is a convex polytope and that $a$ satisfies \ref{A1}--\ref{A3}. Let $u \in \mathcal{A}_{0}$, $p = \Phi(u)$, and let $q_{h} \in \mathbb{V}_h$ solve \eqref{eq:weak_adj_eq_discr}. Then,
\begin{equation}\label{eq:adj_priori_estimate_L^2}
\|p-q_{h}\|_{L^{2}(\Omega)}
\lesssim h^{2-\frac{d}{2}} \left[\|f-a(\cdot,0)\|_{L^{2}(\Omega)} + \sum_{t \in \mathcal{D}}|y_{t}| \right].
\end{equation}
If, in addition, $u \in \mathbb{U}_{ad}$, then there exists $h_{\sharp} > 0$ such that
\begin{equation}\label{eq:adj_priori_estimate_L^1}
 \|p-q_{h}\|_{L^{1}(\Omega)} \lesssim h^{2} |\log h|^{2} \quad \forall h < h_{\sharp}.
\end{equation}
The hidden constants depend on $u$ but are independent of $h$; if, in addition, $u \in \mathbb{U}_{ad}$, they can be chosen uniformly with respect to $u$.
\end{theorem}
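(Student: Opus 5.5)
The adjoint problem \eqref{eq:weak_adj_eq} is a linear elliptic problem with an $L^\infty(\Omega)$ reaction coefficient $c:=\frac{\partial a}{\partial y}(\cdot,y)+u$ and a measure right-hand side. We first check that $c$ is admissible. By Theorem \ref{thm:well-posedness-semilinearPDE} and \ref{A3}, $c \in L^\infty(\Omega)$. Since $u\in\mathcal{A}_0$ and \ref{A2} holds, $c \geq a_0+u \geq 0$, with bounds uniform over $u\in\mathbb{U}_{ad}$. We then treat the two estimates by duality, writing $e_h:=p-q_h$. Note that \eqref{eq:weak_adj_eq_discr} is exactly the Galerkin discretization of \eqref{eq:weak_adj_eq}, because $y$, and not a discrete state, appears in both the coefficient and the data. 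Hence we have Galerkin orthogonality:
\[
\int_\Omega\nabla w_h\cdot\nabla e_h\,\mathrm{d}x+\int_\Omega c\,e_hw_h\,\mathrm{d}x=0 \qquad \forall w_h\in\mathbb{V}_h.
\]

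\textbf{$L^2(\Omega)$ bound.} Let $\phi\in H^1_0(\Omega)\cap H^2(\Omega)$ solve $-\Delta\phi+c\phi=e_h$. By convexity of $\Omega$, $\|\phi\|_{H^2(\Omega)}\lesssim\|e_h\|_{L^2(\Omega)}$. Let $\phi_h\in\mathbb{V}_h$ be its Galerkin approximation. Testing the continuous and discrete adjoint equations with $\phi$ and $\phi_h$ gives
\[
\|e_h\|^2_{L^2(\Omega)}=\sum_{t\in\mathcal{D}}(y(t)-y_t)(\phi-\phi_h)(t).
\]
Standard $L^\infty$ estimates for linear problems on convex polytopes (the linear analogue of \eqref{eq:state_priori_estimate_L^infty}) give $\|\phi-\phi_h\|_{L^\infty(\Omega)}\lesssim h^{2-d/2}\|e_h\|_{L^2(\Omega)}$. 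It remains to bound $|y(t)|$. By Theorem \ref{thm:well-posedness-semilinearPDE} with $q=2$, $|y(t)|\le\|y\|_{L^\infty(\Omega)}\lesssim \|f-a(\cdot,0)\|_{L^2(\Omega)}$. Dividing by $\|e_h\|_{L^2(\Omega)}$ yields \eqref{eq:adj_priori_estimate_L^2}.

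\textbf{$L^1(\Omega)$ bound.} Now let $\phi$ solve $-\Delta\phi+c\phi=\operatorname{sign}(e_h)\in L^\infty(\Omega)$. The same duality argument gives
\[
\|e_h\|_{L^1(\Omega)}=\sum_{t\in\mathcal{D}}(y(t)-y_t)(\phi-\phi_h)(t),
\]
so it suffices to show $|(\phi-\phi_h)(t)|\lesssim h^2|\log h|^2$ at each $t\in\mathcal{D}$. Since $\mathcal{D}\subset\Omega$ is finite, we pick a smooth $\Omega_0\Subset\Omega$ containing $\mathcal{D}$ and $\Omega_1\Subset\Omega_0$ with $\mathcal{D}\subset\Omega_1$. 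Then we apply the local estimate \eqref{eq:state_local_L^infty_est} in its linear form, i.e.\ the argument of \cite[Theorem 4.1]{MR4438718}, with $a$ replaced by the linear term $c\,\cdot$, forcing $\operatorname{sign}(e_h)\in L^\infty(\Omega_0)$, and $\|\operatorname{sign}(e_h)\|_{L^\infty}\le 1$.

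\textbf{Main obstacle.} The hardest step is verifying that this local pointwise estimate applies with constants independent of $h$, even though the data $\operatorname{sign}(e_h)$ depends on $h$. The estimate combines an interior pointwise bound $\|\phi-\phi_h\|_{L^\infty(\Omega_1)}\lesssim|\log h|\inf_{\chi_h}\|\phi-\chi_h\|_{L^\infty(\Omega_0)}+\|\phi-\phi_h\|_{L^2(\Omega)}$ with local $W^{2,\infty}$-type regularity. Interior regularity for $L^\infty$ data gives $\phi\in W^{2,\varrho}(\Omega_0)$ for all $\varrho<\infty$, with norm $\lesssim\varrho$. Choosing $\varrho\sim|\log h|$ then gives the $h^2|\log h|$ interpolation error, hence the $|\log h|^2$ factor overall. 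The global term is $O(h^2)$ by the $H^2$ bound. All constants depend only on $\|c\|_{L^\infty(\Omega)}$ and $\|\operatorname{sign}(e_h)\|_{L^\infty}\le1$. This gives the required uniformity in $h$ for $h<h_\sharp$, and uniformity in $u\in\mathbb{U}_{ad}$ follows from the uniform bound on $\|c\|_{L^\infty(\Omega)}$.
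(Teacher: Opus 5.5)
Your route is essentially the paper's. The paper defers to Casas' duality argument for \eqref{eq:adj_priori_estimate_L^2}, and to a duality argument with $L^\infty$ data plus interior pointwise estimates for \eqref{eq:adj_priori_estimate_L^1}; this is what you reconstruct. Your $L^1(\Omega)$ part, where $u\in\mathbb{U}_{ad}$, is fine.

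There is, however, one false step. You assert that $c=\frac{\partial a}{\partial y}(\cdot,y)+u\in L^\infty(\Omega)$ for $u\in\mathcal{A}_0$. Theorem~\ref{thm:well-posedness-semilinearPDE} and \ref{A3} only bound the first summand. The set $\mathcal{A}_0$ imposes only the lower bound $a_0+u\geq 0$ on $u\in L^2(\Omega)$, so in general you only know $c\in L^2(\Omega)$ and $c\geq0$. This is precisely why the statement separates $u\in\mathcal{A}_0$ (for \eqref{eq:adj_priori_estimate_L^2}) from $u\in\mathbb{U}_{ad}$ (for \eqref{eq:adj_priori_estimate_L^1}). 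It is also why the paper stresses that Casas' argument needs only $c\geq0$ and $c\in L^2(\Omega)$. As written, your bound $\|\phi\|_{H^2(\Omega)}\lesssim\|e_h\|_{L^2(\Omega)}$, with a constant depending on $\|c\|_{L^\infty(\Omega)}$, covers only $u\in\mathcal{A}_0\cap L^\infty(\Omega)$.

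The repair uses results already in the paper.
\begin{itemize}
\item Apply Theorem~\ref{thm:state_reg}(ii) to the dual problem with $a\equiv0$ and reaction coefficient $c\in L^2(\Omega)$, $c\geq0$. This gives $\|\phi\|_{H^2(\Omega)}\lesssim(1+\|c\|_{L^2(\Omega)})\|e_h\|_{L^2(\Omega)}$. Behind it is the Stampacchia bound $\|\phi\|_{L^\infty(\Omega)}\lesssim\|e_h\|_{L^2(\Omega)}$, which is independent of $c$, so $c\phi\in L^2(\Omega)$.
\item Theorem~\ref{thm:priori_state_estimates} is already stated for coefficients in $\mathcal{A}_0$, so its linear version of \eqref{eq:state_priori_estimate_L^infty} applies to $\phi-\phi_h$.
\item The testing step is legitimate. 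First, $\phi\in H^2(\Omega)\hookrightarrow W^{1,s}(\Omega)$; for $d=3$ this uses $s\leq 6$, i.e., $r\geq 6/5$. Second, $c\,e_h\phi\in L^1(\Omega)$, because $e_h\in W^{1,r}_0(\Omega)\hookrightarrow L^2(\Omega)$ and $\phi\in L^\infty(\Omega)$.
\end{itemize}
The resulting constant depends on $u$ through $\|u\|_{L^2(\Omega)}$, which the statement allows.
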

\begin{proof}
Let $c_0$ be given by $c_{0}(x)= \partial a / \partial y (x,y(x)) + u(x)$ for a.e.~$x \in \Omega$, where $y = \mathcal{S}(u)$. By \ref{A2} and $u \in \mathcal{A}_0$, $c_{0}\geq a_{0} + u \geq 0$ a.e.~in $\Omega$. The bounds \eqref{eq:adj_priori_estimate_L^2} and \eqref{eq:adj_priori_estimate_L^1} follow from slight modifications of the proofs of \cite[Theorem 3]{MR0812624} and \cite[Lemma 5.3]{MR3973329}, respectively. Note that \cite[Theorem 3]{MR0812624} extends to $c_{0} \in L^{2}(\Omega)$. These arguments use $c_{0}$ only through $c_{0} \geq 0$ and $c_{0} \in L^{2}(\Omega)$ for \eqref{eq:adj_priori_estimate_L^2}, and additionally $c_{0} \in L^{\infty}(\Omega)$ for \eqref{eq:adj_priori_estimate_L^1}. The estimates thus hold for any coefficient with these properties.
\end{proof}

The following result is essential for deriving error estimates for the discretization of \eqref{eq:weak_ocp}--\eqref{eq:weak_state_eq} using the variational discretization approach (see Section \ref{sec:error_est_SD}).

\begin{theorem}[error bound]
Assume that $\Omega$ is a convex polytope and that $a$ satisfies \ref{A1}--\ref{A3}. Let $u \in \mathbb{U}_{ad}$, $p = \Phi(u)$, and $q_{h}$ be the solution of \eqref{eq:weak_adj_eq_discr}. Then, there exists $h_{\circ} > 0$ such that
\begin{equation}\label{eq:error_est_adjoint_out_balls}
\|p-q_{h}\|_{L^{2}(\Omega \setminus \cup_{t \in \mathcal{D}}\bar{B}_{t})} \lesssim h^{2} |\log h| \quad \forall h < h_{\circ},
\end{equation}
where the hidden constant is independent of $u \in \mathbb{U}_{ad}$ and $h$. Here, $B_{t} \subset \Omega$ denotes a suitable open ball centered at $t \in \mathcal{D}$ of strictly positive radius.
\end{theorem}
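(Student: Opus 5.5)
The plan is a duality argument on the subdomain away from the observation points. We combine it with the interior pointwise and $L^1$-type estimates already available (Theorem \ref{thm:priori_state_estimates} and \eqref{eq:adj_priori_estimate_L^1}) and with local regularity of $p$ away from the singularities (Theorem \ref{thm:H^2-reg-p}).

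Set $c_0 := \partial a/\partial y(\cdot,y)+u \in L^\infty(\Omega)$ with $c_0\ge 0$, and write $e:=p-q_h$. Let $\Omega_B:=\Omega\setminus\cup_{t\in\mathcal{D}}\bar B_t$, and take $\phi\in L^2(\Omega)$ supported in $\Omega_B$ with $\|\phi\|_{L^2}=1$. Let $w\in H^1_0(\Omega)\cap H^2(\Omega)$ solve the dual problem $-\Delta w+c_0 w=\phi$. Convexity gives $\|w\|_{H^2(\Omega)}\lesssim 1$, uniformly in $u\in\mathbb{U}_{ad}$ because $\|c_0\|_{L^\infty}$ is uniformly bounded. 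Let $w_h$ be its Galerkin approximation. Galerkin orthogonality for $e$ holds against $\mathbb{V}_h$, since both $p$ and $q_h$ share the same right-hand side, and the Dirac data tested by $w_h$ are legitimate because $w_h$ is continuous. This yields
\[
(e,\phi)=\int_\Omega \nabla e\cdot\nabla w+c_0 e w = \int_\Omega \nabla e\cdot\nabla (w-w_h)+c_0 e(w-w_h),
\]
which also equals $\sum_{t}(y(t)-y_t)(w-w_h)(t)$ after using the equation for $p$ tested with $w-w_h$. The latter identity needs $w-w_h\in W^{1,s}_0$; this holds because $w\in W^{2,2}\hookrightarrow W^{1,s}$ for $s$ close to $d$ in 2D, and via an approximation argument otherwise. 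Hence
\[
|(e,\phi)|\le \sum_{t\in\mathcal{D}}|y(t)-y_t|\,|(w-w_h)(t)| \lesssim \max_{t\in\mathcal{D}}|(w-w_h)(t)|,
\]
where $|y(t)-y_t|$ is bounded uniformly via Theorem \ref{thm:well-posedness-semilinearPDE}.

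The key step is to bound the pointwise error of $w$ at the observation points. Since $\phi$ vanishes on $\cup_t B_t$, the dual solution $w$ satisfies $-\Delta w+c_0w=0$ in each $B_t$. Interior regularity therefore gives $w\in W^{2,\infty}$, or at least $W^{2,q}$ for every $q<\infty$, on a smaller ball $B_t'$, with norm bounded by $\|w\|_{H^2(\Omega)}\lesssim 1$. I would then use a local (interior) maximum-norm estimate of Schatz--Wahlbin type:
\[
\|w-w_h\|_{L^\infty(B_t'')}\lesssim |\log h|\,\inf_{\chi\in\mathbb{V}_h}\|w-\chi\|_{L^\infty(B_t')}+\|w-w_h\|_{L^2(\Omega)}.
\]
The first term is $\lesssim h^2|\log h|$ by local interpolation, with an extra log factor possibly absorbed if only $W^{2,q}$ is available; I would take $q$ of order $|\log h|$ to keep a single logarithm. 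The second term is $\lesssim h^2$ by the standard $L^2$ estimate, i.e.\ the linear analogue of \eqref{eq:state_priori_estimate_L^2}. Taking the supremum over $\phi$ yields \eqref{eq:error_est_adjoint_out_balls}, with $h_\circ$ chosen so that the local estimates apply, namely once the mesh resolves the balls.

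The main obstacle is securing the local $L^\infty$ estimate with only one logarithmic factor and with constants uniform in $u$. The coefficient $c_0$ is merely $L^\infty$, which limits local regularity of $w$ to $W^{2,q}$ for $q<\infty$. I would first try the Schatz--Wahlbin interior estimates directly, which tolerate bounded lower-order coefficients. If needed, I would bound the interpolation error in $L^\infty$ by $h^{2-d/q}\|w\|_{W^{2,q}}$ and track the constant $\|w\|_{W^{2,q}}\lesssim q$ from Calderón--Zygmund, so that choosing $q\sim|\log h|$ gives $h^2|\log h|$. Uniformity follows because every constant depends on $u$ only through $\|c_0\|_{L^\infty}$, which is bounded on $\mathbb{U}_{ad}$.
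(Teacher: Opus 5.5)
Your strategy is the same as the paper's. It is a duality argument with data supported off the balls; the paper takes $\mathfrak f=(1-\chi_B)(p-q_h)$ directly instead of a supremum over $\phi$, which is equivalent. The error is reduced to $\sum_{t\in\mathcal D}(y(t)-y_t)(\varphi-\varphi_h)(t)$, and a local maximum-norm estimate for the dual error is applied near each $t$. Galerkin orthogonality, the representation identity (which also uses orthogonality of $w-w_h$ against $q_h\in\mathbb V_h$), uniformity in $u$, and the global $L^2$ term are all fine. The difference is that the paper does not derive the key local bound $\|\varphi-\varphi_h\|_{L^\infty(B_t(\varrho_1))}\lesssim h^2|\log h|\,\|\mathfrak f\|_{L^2(\Omega)}$; it imports it from \cite[Lemma 4.4(ii)]{MR3973329}. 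Your own derivation of that bound is where the proposal fails.

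\textbf{Regularity.} $w\in W^{2,\infty}(B_t')$ does not follow. Since $u\in\mathbb U_{ad}$ is an arbitrary bounded function and $a$ is only Carath\'eodory in $x$, $c_0$ is merely in $L^\infty$. Near $t$ you only know $\Delta w=c_0w\in L^\infty$, which gives $W^{2,q}$ for $q<\infty$ but not bounded second derivatives. For instance, take $w=1+\epsilon x_1x_2\log|x|+K|x|^2$ with $\epsilon\neq0$ and $K\geq|\epsilon|/2$. It solves $-\Delta w+c_0w=0$ near $0$ with $c_0=\Delta w/w$ bounded and nonnegative, yet $\partial_{12}w$ blows up like $\epsilon\log|x|$.

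\textbf{Logarithm count.} In the $W^{2,q}$ fallback, taking $q\sim|\log h|$ gives $\inf_{\chi}\|w-\chi\|_{L^\infty(B_t')}\lesssim h^{2-d/q}\,q\sim h^2|\log h|$. This is the best-approximation error itself. The Schatz--Wahlbin estimate then multiplies it by a further $|\log h|$, so you actually obtain $|(w-w_h)(t)|\lesssim h^2|\log h|^2$. The two logarithms come from different sources: the piecewise-linear interior estimate, and the Calder\'on--Zygmund growth of the second derivatives of $w$. In the example above, the best-approximation error is genuinely of order $h^2|\log h|$, so nothing can be absorbed along this route.

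\textbf{Consequence.} As written, your argument proves $\|p-q_h\|_{L^2(\Omega\setminus\cup_{t}\bar B_t)}\lesssim h^2|\log h|^2$, not the stated single-logarithm bound. To get $h^2|\log h|$ this way, you need $w\in W^{2,\infty}$ near $\mathcal D$. One way is to assume H\"older continuity of $c_0$ near $\mathcal D$, so that Schauder theory gives $w\in C^{2,\gamma}$ locally. Otherwise you must take the local estimate from the literature, as the paper does. Note that the paper only uses \eqref{eq:error_est_adjoint_out_balls} in the proof of Theorem \ref{thm:ee_hsquare_semidisc}, where $h^2|\log h|^2$ is all that is needed, so your weaker bound would suffice there.
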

\begin{proof}
The proof follows from an adaptation of \cite[Lemma 5.5]{MR3973329}. Let $0 < \varrho_{1} < \varrho_{0} < \varrho$ be such that $B_t(\varrho) \Subset \Omega$ and $\bar{B}_t(\varrho) \cap \mathcal{D} = \{ t \}$ for every $t \in \mathcal{D}$, with $\varrho$ small enough such that $\{B_{t}(\varrho)\}_{t \in \mathcal{D}}$ are pairwise disjoint. Set $B = \cup \{ B_t(\varrho): t \in \mathcal{D} \}$ and $\Omega_i:= \cup \{ B_t(\varrho_i): t \in \mathcal{D} \}$, for $i \in \{0,1\}$. Note that $\mathcal{D} \subset \Omega_1 \Subset \Omega_0 \Subset B \Subset \Omega$. Let $\varphi \in H_0^1(\Omega)$ solve the associated dual problem with right-hand side $\mathfrak{f}:=(1-\chi_{B})(p-q_{h})$, where $\chi_B$ denotes the characteristic function of $B$, and let $\varphi_h$ be its Galerkin approximation. Testing with $p$ (via a density argument) and $q_h$, respectively, we obtain
\[
 \|p-q_{h}\|^{2}_{L^{2}(\Omega\setminus B)} = \sum_{t \in \mathcal{D}}\langle
(y(t)-y_{t})\delta_{t}, \varphi-\varphi_{h}\rangle.
\]
Since $\mathfrak{f}$ vanishes on each $B_t(\varrho_0)$, the local bound of \cite[Lemma 4.4(ii)]{MR3973329} applies on $B_t(\varrho_1)$. As a result, $\|\varphi-\varphi_{h}\|_{L^{\infty}(B_t(\varrho_1))} \lesssim h^{2}|\log h| \| \mathfrak{f} \|_{L^{2}(\Omega)}$. Consequently, $ \|p-q_{h}\|_{L^{2}(\Omega\setminus B)} \lesssim h^{2}|\log h| \sum_{t \in \mathcal{D}} |y(t)-y_{t}|$. Note that
$\sum_{t\in\mathcal{D}}|y(t)-y_t|\lesssim 1$ uniformly in $u$ and $h$.
\end{proof}
%
\subsection{Auxiliary error bounds}
Let $u_{h} \in \mathbb{U}_{ad}$ and set $y_{h} = \mathcal{S}_{h}(u_{h})$, where $\mathcal{S}_h$ is the discrete control-to-state map defined in Theorem \ref{thm:diff_dicrete_S}. We define the discrete adjoint state $p_{h} \in \mathbb{V}_{h}$ as the solution of
\begin{equation}
\label{eq:discr_adj_eq}
\int_{\Omega} \nabla w_{h} \cdot \nabla p_{h}\mathrm{d}x
+
\int_{\Omega}\left[\frac{\partial a}{\partial y}(\cdot,y_{h}) + u_{h}\right]p_{h}w_{h}\mathrm{d}x
=
\sum_{t \in \mathcal{D}}\langle (y_{h}(t) - y_{t})\delta_{t},w_{h} \rangle
\end{equation}
for all $w_{h} \in \mathbb{V}_{h}$. We next derive an auxiliary error estimate that will be important for deriving a priori error bounds for both finite element schemes we propose for \eqref{eq:weak_ocp}--\eqref{eq:weak_state_eq}. The proof of this estimate relies on the auxiliary state $y(u_h)$, defined as follows. For $u_h\in\mathbb{U}_{ad}$, $y(u_h) := \mathcal{S}(u_h) \in H_0^1(\Omega) \cap L^{\infty}(\Omega)$; that is, $y(u_h)$ solves
\begin{equation}\label{eq:yuh}
\int_{\Omega} \nabla y(u_h) \cdot \nabla v \mathrm{d}x
+
\int_{\Omega} a(\cdot,y(u_h))v \mathrm{d}x
+
\int_{\Omega} u_h y(u_h) v \mathrm{d}x = \int_{\Omega} f v \mathrm{d}x
\end{equation}
for all $v \in H_0^1(\Omega)$. The existence and uniqueness of $y(u_h)$ follow from \cite[Theorem 3.5]{MR5008466}. More importantly, for any sequence $\{u_{h}\}_{h>0} \subset \mathbb{U}_{ad}$, we have
\begin{equation}
\label{eq:estimates_uniform_in_h}
 \| \nabla y(u_h) \|_{L^2(\Omega)} \leq \| f - a(\cdot,0) \|_{H^{-1}(\Omega)},
 \quad
 \| y(u_h) \|_{L^{\infty}(\Omega)} \leq C_{\infty} \| f - a(\cdot,0) \|_{L^q(\Omega)},
\end{equation}
where $q > d/2$ and $C_{\infty}>0$ is independent of $h$. Using these bounds, assumption \ref{A3}, and the fact that $\{u_h \}_{h>0}$ is uniformly bounded in $L^2(\Omega)$, we obtain
\begin{equation}\label{eq:H2_bound_yuh}
|y(u_{h})|_{H^{2}(\Omega)} \leq C\|f-a(\cdot,0)\|_{L^{2}(\Omega)},
\end{equation}
where $C>0$ is independent of $h$.

\begin{theorem}[auxiliary error bound]\label{thm:aux_error_estimate_adj_state}
Assume that $\Omega$ is a convex polytope and that $a$ satisfies \ref{A1}--\ref{A3}. Let $u \in \mathbb{U}_{ad}$, and let $\{u_{h}\}_{h>0} \subset \mathbb{U}_{ad}$. Let $p = \Phi(u)$, and let $p_{h} \in \mathbb{V}_{h}$ be the solution of \eqref{eq:discr_adj_eq}. Then, there exists $h_{\circledast} > 0$ such that
\begin{equation}\label{eq:fully_est_p_L2}
\|p - p_{h}\|_{L^{2}(\Omega)} \lesssim \|u - u_{h}\|_{L^{2}(\Omega)} + h^{2 - \tfrac{d}{2}} \quad \forall h<h_{\circledast},
\end{equation}
where the hidden constant is independent of $h$ and uniform with respect to $u_h \in \mathbb{U}_{ad}$.
\end{theorem}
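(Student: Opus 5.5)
We split the error through the auxiliary adjoint state associated with $u_h$. Let $y(u_h)=\mathcal{S}(u_h)$ be as in \eqref{eq:yuh}, and let $\hat p := \Phi(u_h)$ be the continuous adjoint state with control $u_h$ and state $y(u_h)$. Let $\hat q_h\in\mathbb{V}_h$ be its Galerkin approximation, i.e., the solution of \eqref{eq:weak_adj_eq_discr} with $(u,y)$ replaced by $(u_h,y(u_h))$. We then write
\[
p-p_h = (p-\hat p) + (\hat p - \hat q_h) + (\hat q_h - p_h)
\]
and bound the three terms separately.

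\emph{First term.} The function $p-\hat p$ is a continuous stability estimate with respect to the control. It solves an adjoint-type problem with coefficient $\partial a/\partial y(\cdot,y)+u\geq 0$. Its right-hand side is
\[
\sum_{t}(y(t)-y(u_h)(t))\delta_t-\Big[\tfrac{\partial a}{\partial y}(\cdot,y)-\tfrac{\partial a}{\partial y}(\cdot,y(u_h))+u-u_h\Big]\hat p.
\]
Estimating $p-\hat p$ in $L^2(\Omega)$ by duality (test with the $H^2\cap H^1_0$ solution $\varphi$ of a problem with datum $p-\hat p$, using $H^2\hookrightarrow C(\bar\Omega)$), we obtain
\[
\|p-\hat p\|_{L^2}\lesssim \sum_t|y(t)-y(u_h)(t)|+\big(\|y-y(u_h)\|_{L^\infty}+\|u-u_h\|_{L^2}\big)\|\hat p\|_{L^2}.
\]
Here $\|\hat p\|_{L^2}\lesssim 1$ uniformly by Theorem \ref{thm:wp_adjoint} and Sobolev embedding. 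It remains to show $\|y-y(u_h)\|_{L^\infty}\lesssim\|u-u_h\|_{L^2}$. The difference solves the linear equation $-\Delta e+\mathfrak{c}e+ue=-(u-u_h)y(u_h)$, where $\mathfrak{c}$ is the mean-value coefficient of $a$, which is nonnegative after adding $a_0$, and bounded by \ref{A3} and \eqref{eq:estimates_uniform_in_h}. The right-hand side is in $L^2$ with norm $\lesssim\|u-u_h\|_{L^2}$, since $y(u_h)$ is uniformly bounded. Theorem \ref{thm:well-posedness-semilinearPDE}, with $q=2>d/2$, gives the $L^\infty$ bound. All constants are uniform since $u,u_h\in\mathbb{U}_{ad}$.

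\emph{Second term.} Since $u_h\in\mathbb{U}_{ad}$, estimate \eqref{eq:adj_priori_estimate_L^2} of Theorem \ref{thm:error_estimate_p} applies directly with uniform constants, giving $\|\hat p-\hat q_h\|_{L^2}\lesssim h^{2-d/2}$.

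\emph{Third term.} This is the discrete perturbation and is the main obstacle. Set $e_h=\hat q_h-p_h\in\mathbb{V}_h$. Subtracting \eqref{eq:discr_adj_eq} from the equation for $\hat q_h$ gives, for all $w_h\in\mathbb{V}_h$,
\[
\int\nabla w_h\cdot\nabla e_h+\int\Big[\tfrac{\partial a}{\partial y}(\cdot,y_h)+u_h\Big]e_h w_h=\sum_t (y(u_h)(t)-y_h(t))w_h(t)-\int\Big[\tfrac{\partial a}{\partial y}(\cdot,y(u_h))-\tfrac{\partial a}{\partial y}(\cdot,y_h)\Big]\hat q_h w_h .
\]
Two ingredients are needed. (a) By \eqref{eq:state_priori_estimate_L^infty}, applied uniformly for $u_h\in\mathbb{U}_{ad}$, we have $\|y(u_h)-y_h\|_{L^\infty}\lesssim h^{2-d/2}$; in particular $y_h$ is uniformly bounded for $h$ small, so \ref{A3} gives a Lipschitz bound on $\partial a/\partial y$. (b) We need a discrete $L^2$ stability estimate for $e_h$ against point sources and $L^1$-type data. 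For this we use duality again: let $\psi_h\in\mathbb{V}_h$ be the discrete solution, with coefficient $\partial a/\partial y(\cdot,y_h)+u_h\geq0$, of the problem with datum $e_h$. Then
\[
\|e_h\|_{L^2}^2\le \sum_t|y(u_h)(t)-y_h(t)|\,\|\psi_h\|_{L^\infty}+\|y(u_h)-y_h\|_{L^\infty}\|\hat q_h\|_{L^2}\|\psi_h\|_{L^2}.
\]
We bound $\|\psi_h\|_{L^\infty}\lesssim\|e_h\|_{L^2}$ by comparing $\psi_h$ with its continuous counterpart in $H^2$, using \eqref{eq:state_priori_estimate_L^infty} (linear case) and the embedding $H^2\hookrightarrow L^\infty$. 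We also use $\|\hat q_h\|_{L^2}\lesssim1$, which follows from the second-term estimate and the bound on $\hat p$. Together these give $\|e_h\|_{L^2}\lesssim h^{2-d/2}$ for $h<h_\circledast$, where $h_\circledast$ is chosen so that the state estimates of Theorem \ref{thm:priori_state_estimates} hold. The delicate points are ensuring every constant is uniform in $u_h$ and handling the dependence of the coefficient on $y_h$; both are resolved by the uniform bounds \eqref{eq:estimates_uniform_in_h}--\eqref{eq:H2_bound_yuh}. Summing the three bounds yields \eqref{eq:fully_est_p_L2}.
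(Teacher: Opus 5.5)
Your proof is correct, but it is organized differently from the paper's.

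\textbf{The paper's route.} The paper uses only two terms. It introduces $\mathsf{p}(u_h)$, the \emph{continuous} adjoint state whose coefficient $\frac{\partial a}{\partial y}(\cdot,y_h)+u_h$ and Dirac amplitudes $y_h(t)-y_t$ are built from the \emph{discrete} state $y_h$. Then $p_h$ is exactly the Galerkin approximation of $\mathsf{p}(u_h)$, and $\|\mathsf{p}(u_h)-p_h\|_{L^2(\Omega)}\lesssim h^{2-d/2}$ follows from \eqref{eq:adj_priori_estimate_L^2}. The difference $p-\mathsf{p}(u_h)$ is handled by the $W^{1,r}(\Omega)$-stability of the continuous adjoint problem. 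There, $\|y-y_h\|_{L^\infty(\Omega)}$ is split through $y(u_h)$ into the same two state bounds you use.

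\textbf{What your route buys and what it costs.} Choosing $\hat p=\Phi(u_h)$ lets you apply Theorem~\ref{thm:error_estimate_p} literally as stated, since the coefficient has the form $\frac{\partial a}{\partial y}(\cdot,\mathcal{S}(u_h))+u_h$. The paper instead must invoke its remark that this estimate uses only nonnegativity and integrability of the coefficient. The price is your third term. The state-discretization error now sits inside a discrete problem, so you need an extra discrete duality argument and the bound $\|\psi_h\|_{L^\infty(\Omega)}\lesssim\|e_h\|_{L^2(\Omega)}$. That bound itself relies on an $L^\infty$ finite element estimate for a linear problem with a general bounded nonnegative coefficient. So you also need a \emph{general coefficient} version of a stated result, just a different one.

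Similarly, your $L^2$-duality treatment of $p-\hat p$ is more elementary than the paper's appeal to the $W^{1,r}$ well-posedness theory. It is legitimate because $H^2(\Omega)\hookrightarrow W^{1,s}(\Omega)$, which holds for $d=3$ precisely because $r\ge 6/5$. However, it yields only $L^2$ control, whereas the paper obtains $\|\nabla(p-\mathsf{p}(u_h))\|_{L^r(\Omega)}$; the weaker control suffices here.

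Overall, the paper's decomposition is shorter, while yours is longer but keeps every step within standard $H^2$-regularity and duality tools.
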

\begin{proof}
We begin with the triangle inequality $\|p-p_{h}\|_{L^{2}(\Omega)} \leq \|p-\mathsf{p}(u_h)\|_{L^{2}(\Omega)} + \|\mathsf{p}(u_h)-p_{h}\|_{L^{2}(\Omega)}$, where $\mathsf{p}(u_h) \in W^{1,r}_{0}(\Omega)$ is the unique solution of
\begin{equation}
\label{eq:rhouh}
\int_{\Omega} \nabla w \cdot \nabla \mathsf{p}(u_h) \mathrm{d}x + \int_{\Omega} \left[ \frac{\partial a}{\partial y}(\cdot,y_{h}) + u_{h}\right]\mathsf{p}(u_h) w \mathrm{d}x = \sum_{t \in \mathcal{D}}\langle (y_{h}(t)-y_{t})\delta_{t},w \rangle
\end{equation}
for all $w \in W^{1,s}_{0}(\Omega)$. Here, $r$ is as in \eqref{eq:r}. We first control $\|\mathsf{p}(u_h)-p_{h}\|_{L^{2}(\Omega)}$. Since $p_h\in\mathbb{V}_h$ is precisely the Galerkin approximation of $\mathsf{p}(u_h)$, the bound \eqref{eq:adj_priori_estimate_L^2} and the observation following its proof yield
\begin{equation}\label{eq:aux_est_adj_error_1}
\|\mathsf{p}(u_h) - p_{h}\|_{L^{2}(\Omega)} \lesssim h^{2-\tfrac{d}{2}} \sum_{t \in \mathcal{D}}|y_{h}(t)-y_{t}|
\lesssim h^{2-\tfrac{d}{2}}\left( \|y_{h}\|_{L^{\infty}(\Omega)} + \sum_{t \in \mathcal{D}}|y_{t}|\right).
\end{equation}
Note that $\|y_{h} \|_{L^{\infty}(\Omega)}$ is uniformly bounded with respect to $h$. In fact, since $y_{h} \in \mathbb{V}_{h}$ is precisely the finite element approximation of $y(u_h)$, defined as the solution of \eqref{eq:yuh}, the error estimate \eqref{eq:state_priori_estimate_L^infty} and the uniform $L^{\infty}(\Omega)$ bound in \eqref{eq:estimates_uniform_in_h} yield
\begin{equation}
 \|y_{h}\|_{L^{\infty}(\Omega)} \leq \|y(u_h) - y_{h}\|_{L^{\infty}(\Omega)}  + \| y(u_h) \|_{L^{\infty}(\Omega)} \lesssim h^{2-\tfrac{d}{2}} + 1.
 \label{eq:yh_uniformly_bounded}
\end{equation}

We now estimate $\|p-\mathsf{p}(u_h)\|_{L^{2}(\Omega)}$. To this end, we define $\psi:=p-\mathsf{p}(u_h)$. Subtracting the equations satisfied by $p$ and $\mathsf{p}(u_h)$ yields
\begin{multline*}
\psi \in W^{1,r}_{0}(\Omega):
\quad
\int_{\Omega} \nabla w \cdot \nabla \psi \mathrm{d}x
+
\int_{\Omega}\left[ \frac{\partial a}{\partial y}(\cdot,y) + u\right]\psi w\mathrm{d}x
=  \sum_{t \in \mathcal{D}}  \langle (y(t)-y_{h}(t))\delta_{t},w \rangle
\\
- \int_{\Omega} \left[ \frac{\partial a}{\partial y}(\cdot,y) - \frac{\partial a}{\partial y}(\cdot,y_{h})\right]\mathsf{p}(u_h)w \mathrm{d}x - \int_{\Omega} (u-u_{h}) \mathsf{p}(u_h)w\mathrm{d}x \quad \forall w \in W^{1,s}_{0}(\Omega).
\end{multline*}
Since $u \in \mathbb{U}_{ad} \subset \mathcal{A}_{0}$, \cite[Theorem 4.2]{MR5008466} guarantees the well-posedness of this problem and yields the stability estimate
\begin{multline}\label{eq:aux_est_adj_error_2}
\|\nabla \psi \|_{L^{r}(\Omega)} \lesssim \sum_{t \in \mathcal{D}}|y(t)-y_{h}(t)| + \left\|\left[ \frac{\partial a}{\partial y}(\cdot,y)-\frac{\partial a}{\partial y}(\cdot,y_{h})\right]\mathsf{p}(u_h) \right\|_{W^{-1,r}(\Omega)}  \\
+ \|(u-u_{h})\mathsf{p}(u_h)\|_{W^{-1,r}(\Omega)} =: \textbf{I}_{h} + \textbf{II}_{h} + \textbf{III}_{h}.
\end{multline}
We estimate the three terms on the right-hand side separately. For $\mathbf{I}_{h}$, we have
\begin{equation}
\label{eq:I_h}
 \textbf{I}_{h} \lesssim \|y-y(u_h)\|_{L^{\infty}(\Omega)} + \|y(u_h)-y_{h}\|_{L^{\infty}(\Omega)} \lesssim  \|u-u_{h}\|_{L^{2}(\Omega)} + h^{2-\tfrac{d}{2}}.
\end{equation}
Here, we have used an $L^{\infty}(\Omega)$ stability estimate for $y-y(u_h)$, obtained by considering the linear problem satisfied by $y-y(u_h)$ and applying \cite[Theorem B.2]{MR1786735}. We have also used that $y_{h}$ is the finite element approximation of $y(u_h)$ and the error estimate \eqref{eq:state_priori_estimate_L^infty}. We now bound $\textbf{II}_{h}$. To do this, we use the stability estimate for the problem that $\mathsf{p}(u_h)$ solves together with the uniform boundedness of $\|y_h\|_{L^\infty(\Omega)}$ and obtain
\[
\|\nabla \mathsf{p}(u_h)\|_{L^r(\Omega)}
\lesssim
\sum_{t\in\mathcal D}|y_h(t)-y_t|
\lesssim 1,
\]
uniformly with respect to $h$. The $L^\infty(\Omega)$ boundedness of $y$, the uniform $L^\infty(\Omega)$ boundedness of $y_h$, the local Lipschitz continuity of $\partial a/\partial y$ with respect to the second variable, and the embeddings $W^{1,r}_{0}(\Omega) \hookrightarrow L^{2}(\Omega)$ and $W^{1,s}_{0}(\Omega) \hookrightarrow C(\bar{\Omega})$ then yield
$
\textbf{II}_{h} \lesssim \|y - y_{h} \|_{L^{\infty}(\Omega)}\|\nabla \mathsf{p}(u_h)\|_{L^{r}(\Omega)} \lesssim \|u-u_{h}\|_{L^{2}(\Omega)} + h^{2-d/2}.
$
The last estimate follows from the bounds used for $\textbf{I}_{h}$. A bound for $\mathbf{III}_{h}$ is analogous and simpler. Substituting the bounds for $\mathbf{I}_{h}, \mathbf{II}_{h}$, and $\mathbf{III}_{h}$ into \eqref{eq:aux_est_adj_error_2} yields
\begin{equation}\label{eq:aux_est_adj_error_3}
\|p - \mathsf{p}(u_h) \|_{L^{2}(\Omega)}
=
\|\psi\|_{L^{2}(\Omega)}
\lesssim
\|\nabla\psi\|_{L^{r}(\Omega)} \lesssim \|u-u_{h}\|_{L^{2}(\Omega)} + h^{2-\tfrac{d}{2}}.
\end{equation}
Combining \eqref{eq:aux_est_adj_error_1}, \eqref{eq:yh_uniformly_bounded}, and \eqref{eq:aux_est_adj_error_3} with the triangle inequality yields \eqref{eq:fully_est_p_L2}.
\end{proof}

\subsection{Discretization of the control problem}\label{sec:disc_ocp}
We propose two discretization schemes for approximating solutions to the optimal control problem \eqref{eq:weak_ocp}--\eqref{eq:weak_state_eq}: a fully discrete scheme, where the admissible control set is discretized using piecewise constant functions, and a semidiscrete scheme, based on the so-called variational discretization approach \cite{MR2122182}, where the set $\mathbb{U}_{ad}$ is not discretized.

Before describing these schemes, we introduce the discrete reduced cost functional 
$
j_{h}: \mathcal{A}_{h} \rightarrow \mathbb{R},
$
defined by
$
j_{h}(\mathtt{u}):=J(\mathcal{S}_{h}(\mathtt{u}),\mathtt{u}).
$
Since $\mathcal{S}_{h}(\mathtt{u}) \in \mathbb{V}_{h} \subset C(\bar{\Omega})$ for every $\mathtt{u} \in \mathcal{A}_{h}$ (cf.~Theorem \ref{thm:diff_dicrete_S}), $j_h$ is well-defined.

We now derive differentiability results for $j_{h}$.

\begin{lemma}[differentiability of $j_{h}$]\label{lem:jh_diff} For every $h > 0$, $j_{h}: \mathcal{A}_{h} \rightarrow \mathbb{R}$ is of class $C^{2}$. Moreover, for every $\mathtt{u} \in \mathcal{A}_{h}$ and $\mathtt{g} \in L^2(\Omega)$, the following identity holds:
\begin{equation}
\label{eq:Djh}
j'_{h}(\mathtt{u})\mathtt{g} = \int_{\Omega}( \alpha \mathtt{u} - \mathtt{y}_{h} \phi_{h}) \mathtt{g} \mathrm{d}x.
\end{equation}
Here, $\mathtt{y}_{h} = \mathcal{S}_{h}(\mathtt{u}) \in \mathbb{V}_h$ and $\phi_{h} \in \mathbb{V}_h$ is the unique solution of
\begin{equation}
\label{eq:phi_h}
\int_{\Omega} \nabla \phi_{h} \cdot \nabla w_{h} \mathrm{d}x
+
\int_{\Omega}\left[\frac{\partial a}{\partial y}(\cdot,\mathtt{y}_{h})+ \mathtt{u} \right]\phi_{h}w_h\mathrm{d}x
=
\sum_{t \in \mathcal{D}}\langle (\mathtt{y}_{h}(t) - y_{t})\delta_{t},w_{h} \rangle
\end{equation}
for all $w_{h} \in \mathbb{V}_{h}$, i.e., $\phi_{h}$ solves \eqref{eq:discr_adj_eq} with $u_{h} = \mathtt{u}$ and
$y_{h} = \mathtt{y}_{h}$.
\end{lemma}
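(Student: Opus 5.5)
The proof will follow the standard adjoint calculus in finite dimensions, with Theorem \ref{thm:diff_dicrete_S} supplying the regularity of $\mathcal{S}_h$. First, $C^2$ regularity: $j_h = J\circ(\mathcal{S}_h,\mathrm{id})$ with $\mathcal{S}_h:\mathcal{A}_h\to\mathbb{V}_h$ of class $C^2$. The map $\mathbb{V}_h\times L^2(\Omega)\ni(v_h,\mathtt{u})\mapsto J(v_h,\mathtt{u})$ is a quadratic polynomial in the finitely many point values $v_h(t)$, $t\in\mathcal{D}$, plus $\tfrac{\alpha}{2}\|\mathtt{u}\|^2_{L^2(\Omega)}$. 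Point evaluation is a continuous linear functional on the finite dimensional space $\mathbb{V}_h$, since all norms are equivalent and $\mathbb{V}_h\subset C(\bar\Omega)$. Hence $J$ is $C^\infty$ on $\mathbb{V}_h\times L^2(\Omega)$, and the chain rule gives $j_h\in C^2(\mathcal{A}_h)$.

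Next, the first derivative. By the chain rule and Theorem \ref{thm:diff_dicrete_S}(ii),
\[
j_h'(\mathtt{u})\mathtt{g}=\sum_{t\in\mathcal{D}}(\mathtt{y}_h(t)-y_t)\mathtt{z}_h(t)+\alpha\int_\Omega \mathtt{u}\mathtt{g}\,\mathrm{d}x,
\qquad \mathtt{z}_h=\mathcal{S}_h'(\mathtt{u})\mathtt{g}.
\]
I then eliminate $\mathtt{z}_h$ using $\phi_h$. Testing \eqref{eq:phi_h} with $w_h=\mathtt{z}_h$ rewrites the point sum as $\int_\Omega\nabla\phi_h\cdot\nabla\mathtt{z}_h\,\mathrm{d}x+\int_\Omega[\partial a/\partial y(\cdot,\mathtt{y}_h)+\mathtt{u}]\phi_h\mathtt{z}_h\,\mathrm{d}x$. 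Testing \eqref{eq:z_h_equation} with $v_h=\phi_h$ shows that this bilinear expression equals $-\int_\Omega\mathtt{g}\mathtt{y}_h\phi_h\,\mathrm{d}x$. The form is symmetric, so the two left-hand sides coincide, and \eqref{eq:Djh} follows.

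The main obstacle is the existence and uniqueness of $\phi_h$ for $\mathtt{u}\in\mathcal{A}_h\setminus\mathcal{A}_0$, where the reaction coefficient may be negative. Since \eqref{eq:phi_h} is a square linear system in $\mathbb{V}_h$, it suffices to show that the homogeneous problem has only the trivial solution. This holds because the bilinear form
\[
b_h(v,w):=\int_\Omega\nabla v\cdot\nabla w\,\mathrm{d}x+\int_\Omega\Big[\frac{\partial a}{\partial y}(\cdot,\mathtt{y}_h)+\mathtt{u}\Big]vw\,\mathrm{d}x
\]
is exactly the one in \eqref{eq:z_h_equation}. Theorem \ref{thm:diff_dicrete_S}(ii) asserts that this problem is uniquely solvable for every right-hand side of the form $-\mathtt{g}\mathtt{y}_h$. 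Its proof, via \cite[Remark 3.6]{MR5008466}, actually establishes coercivity of $b_h$ on $\mathbb{V}_h$. To see this, write $\mathtt{u}=\bar{\mathtt{u}}+(\mathtt{u}-\bar{\mathtt{u}})$ with $\bar{\mathtt{u}}\in\mathcal{A}_0$ and $\|\mathtt{u}-\bar{\mathtt{u}}\|_{L^2(\Omega)}<C^{-2}_{4\hookrightarrow2}$. By \ref{A2}, $\partial a/\partial y(\cdot,\mathtt{y}_h)+\bar{\mathtt{u}}\ge a_0+\bar{\mathtt{u}}\ge0$. The perturbation satisfies
\[
\Big|\int_\Omega(\mathtt{u}-\bar{\mathtt{u}})v^2\,\mathrm{d}x\Big|\le\|\mathtt{u}-\bar{\mathtt{u}}\|_{L^2(\Omega)}\|v\|_{L^4(\Omega)}^2<\|\nabla v\|^2_{L^2(\Omega)}
\]
for $v\neq0$. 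Hence $b_h(v,v)>0$ for every $v\neq0$. The coercivity constant is not uniform in $\mathtt{u}$, but that does not matter here: positivity on the finite dimensional space $\mathbb{V}_h$ gives injectivity, hence invertibility of the symmetric system. Therefore $\phi_h$ exists and is unique, which completes the argument.
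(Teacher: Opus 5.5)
Your proposal is correct and follows the same route as the paper: $C^2$ regularity by the chain rule from Theorem~\ref{thm:diff_dicrete_S}, the formula $j_h'(\mathtt{u})\mathtt{g}=\sum_{t\in\mathcal{D}}(\mathtt{y}_h(t)-y_t)\mathtt{z}_h(t)+\alpha(\mathtt{u},\mathtt{g})_{L^2(\Omega)}$, and elimination of $\mathtt{z}_h$ by testing \eqref{eq:z_h_equation} with $\phi_h$ and \eqref{eq:phi_h} with $\mathtt{z}_h$. Your additional positivity argument for the bilinear form on $\mathbb{V}_h$ when $\mathtt{u}\in\mathcal{A}_h\setminus\mathcal{A}_0$ is sound and supplies the unique solvability of \eqref{eq:phi_h}, which the paper takes for granted here (it only alludes to standard computations after \eqref{eq:weak_adj_eq_discr}).
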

\begin{proof}
Since $\mathcal{S}_{h}$ is of class $C^{2}$ (cf.~Theorem \ref{thm:diff_dicrete_S}), the chain rule implies that $j_{h}$ is also of class $C^{2}$. We now derive \eqref{eq:Djh}. Let $\mathtt{g} \in L^{2}(\Omega)$. Basic computations show that
$
j'_{h}(\mathtt{u})\mathtt{g} = \sum_{t \in \mathcal{D}}(\mathtt{y}_{h}(t)-y_{t}) \mathtt{z}_{h}(t) + (\alpha \mathtt{u},\mathtt{g})_ {L^{2}(\Omega)},
$
where $\mathtt{z}_{h} = \mathcal{S}'_{h}(\mathtt{u})\mathtt{g}$; see item $(ii)$ in Theorem \ref{thm:diff_dicrete_S}. Setting $v_{h} = \phi_{h}$ in \eqref{eq:z_h_equation} and $w_{h} = \mathtt{z}_{h}$ in \eqref{eq:phi_h}, we obtain
$
\sum_{t \in \mathcal{D}}(\mathtt{y}_{h}(t)-y_{t}) \mathtt{z}_{h}(t) = - \int_{\Omega} \mathtt{g}  \mathtt{y}_{h}  \phi_{h} \mathrm{d}x.
$
Combining these identities yields \eqref{eq:Djh}.
\end{proof}

\subsubsection{A fully discrete scheme}\label{sec:FD_scheme}
We propose the following fully discrete approximation of problem \eqref{eq:weak_ocp}--\eqref{eq:weak_state_eq}: Minimize $J(y_{h},u_{h})$ subject to
\begin{equation}\label{eq:fully_eq}
 \int_{\Omega} \nabla y_{h} \cdot \nabla v_{h} \mathrm{d}x
 +
 \int_{\Omega} a(\cdot,y_{h})v_{h} \mathrm{d}x + \int_{\Omega}u_{h}y_{h}v_{h} \mathrm{d}x = \int_{\Omega} fv_{h} \mathrm{d}x \quad \forall v_{h} \in \mathbb{V}_{h},
\end{equation}
and the control constraint $u_{h} \in \mathbb{U}_{ad,h}$, where $\mathbb{U}_{ad,h}:=\mathbb{U}_{h} \cap \mathbb{U}_{ad}$ and $\mathbb{U}_{h} = \{ v_{h} \in L^{\infty}(\Omega): v_{h}|_{T} \in \mathbb{P}_{0}(T) ~ \forall T \in \mathscr{T}_{h} \}$. Note that $y_{h} = \mathcal{S}_{h}(u_{h})$. We recall that $\mathcal{S}_{h}$ and $\mathbb{V}_{h}$ are defined in Theorem \ref{thm:diff_dicrete_S} and \eqref{eq:Vh}, respectively.

Since $\mathbb{U}_{ad,h} \subset \mathbb{U}_{ad} \subset \mathcal{A}_0 \subset \mathcal{A}_h$, $\mathcal{S}_h$ is well-defined on $\mathbb{U}_{ad,h}$ and the existence of a discrete solution follows from the compactness of $\mathbb{U}_{ad,h}$ and the continuity of $j_h$. By Lemma \ref{lem:jh_diff}, the first order optimality conditions for the fully discrete optimal control problem read as follows: If $\bar{u}_{h} \in \mathbb{U}_{ad,h}$ is a locally optimal control, then
\begin{equation}\label{eq:fully_var_ineq}
j'_{h}(\bar{u}_{h})(u_{h}-\bar{u}_{h}) = \int_{\Omega} (\alpha \bar{u}_{h} - \bar{y}_{h}\bar{p}_{h})(u_{h}-\bar{u}_{h}) \mathrm{d}x \geq 0 \quad \forall u_{h} \in \mathbb{U}_{ad,h}.
\end{equation}
Here, $\bar{y}_{h} = \mathcal{S}_{h}(\bar{u}_{h})$ and $\bar{p}_{h} \in \mathbb{V}_{h}$ solves \eqref{eq:discr_adj_eq} with $y_{h} = \bar{y}_{h}$ and $u_{h} = \bar{u}_{h}$.

\subsubsection{A semidiscrete scheme}
\label{sec:SD_scheme}

In this scheme, the state equation is discretized by finite elements, while the control space is left undiscretized. The resulting scheme is as follows \cite{MR2122182}: Minimize $J(y_{h},\mathfrak{u})$ subject to
\begin{equation}\label{eq:state_eq_semidisc}
\int_{\Omega} \nabla y_{h} \cdot \nabla v_{h} \mathrm{d}x + \int_{\Omega}a(\cdot,y_{h})v_{h} \mathrm{d}x + \int_{\Omega}\mathfrak{u}y_{h}v_{h} \mathrm{d}x = \int_{\Omega}fv_{h} \mathrm{d}x \quad \forall v_{h} \in \mathbb{V}_{h},
\end{equation}
and the control constraint $\mathfrak{u} \in \mathbb{U}_{ad}$. The existence of a discrete solution follows from standard arguments. In view of the characterization of $j'_{h}$ in Lemma \ref{lem:jh_diff}, the first order conditions are as follows: If $\bar{\mathfrak{u}} \in \mathbb{U}_{ad}$ is a locally optimal control, then
\begin{equation}\label{eq:first_opt_cond_semidiscrete}
j'_{h}(\bar{\mathfrak{u}})(u-\bar{\mathfrak{u}}) = \int_{\Omega}(\alpha \bar{\mathfrak{u}}-\bar{y}_{h}\bar{p}_{h})(u-\bar{\mathfrak{u}})\mathrm{d}x \geq 0 \quad \forall u \in \mathbb{U}_{ad},
\end{equation}
where $\bar{y}_{h} = \mathcal{S}_{h}(\bar{\mathfrak{u}})$ and $\bar{p}_{h} \in \mathbb{V}_{h}$ solves \eqref{eq:discr_adj_eq} with $y_{h} = \bar{y}_{h}$ and $u_{h} =  \bar{\mathfrak{u}}$.
By the inequality \eqref{eq:first_opt_cond_semidiscrete}, $\bar{\mathfrak{u}}$ admits the projection characterization \cite[Section 4.6]{MR2583281}:
\begin{equation}\label{eq:disc_opt_ct_SD}
\bar{\mathfrak{u}}(x) = \Pi_{[\mathtt{a},\mathtt{b}]}(\alpha^{-1}\bar{y}_{h}(x)\bar{p}_{h}(x))  \quad \text{for a.e.}~x \in \Omega.
\end{equation}
Since $\bar{\mathfrak{u}}$ depends implicitly on $h$, we will use the notation $\bar{\mathfrak{u}}_{h}$ from now on.

\section{Convergence}\label{sec:conv_discretizations}
In this section, we analyze convergence properties of the fully discrete and semidiscrete schemes, beginning with an auxiliary result.

\begin{theorem}[error bounds and convergence]\label{thm:aux_error_estimate_state}
Assume that $\Omega$ is a convex polytope and that \ref{A1}--\ref{A3} hold. Let $u \in \mathbb{U}_{ad}$, and let $\{ u_{h} \}_{h >0}$ be such that $u_{h} \in \mathbb{U}_{ad,h} \subset \mathbb{U}_{ad}$ for every $h>0$. Let $y = \mathcal{S}(u)$ be the solution to \eqref{eq:weak_state_eq} and $y_{h} = \mathcal{S}_{h}(u_{h})$ be the solution to \eqref{eq:fully_eq}. If $h$ is sufficiently small, then
\begin{equation}\label{eq:fully_state_estimates}
\|\nabla(y-y_{h})\|_{L^{2}(\Omega)} \lesssim h + \|u-u_{h}\|_{L^{2}(\Omega)}, \quad \|y-y_{h}\|_{L^{\infty}(\Omega)} \lesssim h^{2-\frac{d}{2}} + \|u-u_{h}\|_{L^{2}(\Omega)}.  
\end{equation}
The hidden constants are independent of $h$ and uniform with respect to $u, u_{h} \in \mathbb{U}_{ad}$. Moreover, if $u_{h} \rightharpoonup u$ in $L^{2}(\Omega)$ as $h \rightarrow 0$, then $y_{h} \rightarrow y$ in $H^{1}_{0}(\Omega) \cap C(\bar{\Omega})$ as $h \rightarrow 0$ and $j(u) \leq \liminf_{h \rightarrow 0}j_{h}(u_{h})$.
\end{theorem}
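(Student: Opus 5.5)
The plan is to insert the auxiliary state $y(u_h)=\mathcal{S}(u_h)$ from \eqref{eq:yuh} and split
\[
y-y_h=(y-y(u_h))+(y(u_h)-y_h).
\]
The first difference is a purely continuous perturbation in the control. The second is a pure Galerkin error, since $y_h=\mathcal{S}_h(u_h)$ is exactly the finite element approximation \eqref{eq:weak_semi_bilinear_eq_discr} of $y(u_h)$ with $\mathtt{u}=u_h\in\mathbb{U}_{ad}$.

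\textbf{Galerkin error.} Theorem \ref{thm:priori_state_estimates} applies with constants uniform in $u_h\in\mathbb{U}_{ad}$. The bound \eqref{eq:state_priori_estimate_L^infty} gives $\|y(u_h)-y_h\|_{L^\infty(\Omega)}\lesssim h^{2-d/2}$. For the gradient I would use the standard energy argument: strong monotonicity (from \ref{A2} and $a_0+u_h\ge0$) gives quasi-optimality. The difference $a(\cdot,y(u_h))-a(\cdot,y_h)$ is controlled via \ref{A3}, using the uniform $L^\infty$ bounds \eqref{eq:estimates_uniform_in_h} and \eqref{eq:yh_uniformly_bounded}. Combined with the uniform $H^2$ bound \eqref{eq:H2_bound_yuh}, this yields $\|\nabla(y(u_h)-y_h)\|_{L^2(\Omega)}\lesssim h$. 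An Aubin--Nitsche/C\'ea-type estimate is cleaner here, so I would adapt the proof of \cite[Lemma 4]{MR2009948}.

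\textbf{Continuous perturbation.} Set $e:=y-y(u_h)$. By the mean value theorem, $a(\cdot,y)-a(\cdot,y(u_h))=\mathfrak{c}\,e$ with $\mathfrak{c}\ge a_0$, and $\mathfrak{c}$ is bounded by \ref{A3} and the uniform $L^\infty$ bounds. Subtracting the two state equations then gives
\[
-\Delta e+(\mathfrak{c}+u)e=(u_h-u)\,y(u_h),
\]
with nonnegative reaction coefficient $\mathfrak{c}+u\ge a_0+u\ge0$. Testing with $e$ gives
\[
\|\nabla e\|_{L^2}\lesssim\|(u-u_h)y(u_h)\|_{H^{-1}}\lesssim\|u-u_h\|_{L^2}\|y(u_h)\|_{L^\infty}.
\]
For the $L^\infty$ norm, the right-hand side lies in $L^2$ with $2>d/2$, so \cite[Theorem B.2]{MR1786735} (as already used for $\mathbf{I}_h$ above) gives $\|e\|_{L^\infty}\lesssim\|u-u_h\|_{L^2}$. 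Both bounds are uniform in $u,u_h\in\mathbb{U}_{ad}$. Adding the two contributions yields \eqref{eq:fully_state_estimates}.

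\textbf{Convergence under weak convergence.} This is the main obstacle, since $\|u-u_h\|_{L^2}$ no longer tends to zero. Here I would show $y(u_h)\to y$ strongly in $H^1_0(\Omega)\cap C(\bar\Omega)$ by compactness. By Theorem \ref{thm:state_reg} and \eqref{eq:H2_bound_yuh}, $\{y(u_h)\}$ is bounded in $H^2(\Omega)$ (or $W^{1,\kappa}\cap C^{0,\varsigma}$), which embeds compactly into $H^1_0(\Omega)\cap C(\bar\Omega)$. So a subsequence converges strongly to some $\hat y$.

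To pass to the limit in \eqref{eq:yuh}, note that $u_h y(u_h)v\rightharpoonup u\hat y v$: it is the product of a weakly convergent and a strongly ($L^\infty$) convergent factor. The nonlinear term converges by \ref{A3}. Hence $\hat y=\mathcal{S}(u)=y$, and uniqueness upgrades subsequence convergence to convergence of the whole family. Combined with the Galerkin estimates of the second step (which tend to zero), this gives $y_h\to y$ in $H^1_0(\Omega)\cap C(\bar\Omega)$.

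Finally, $y_h(t)\to y(t)$ for all $t\in\mathcal{D}$, and weak lower semicontinuity of $\|\cdot\|_{L^2}^2$ gives $j(u)\le\liminf_{h\to0}j_h(u_h)$.
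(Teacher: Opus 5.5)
Your proposal is correct and follows essentially the same route as the paper. You split through the auxiliary state $y(u_h)=\mathcal{S}(u_h)$ and handle the Galerkin part with a C\'ea-type energy estimate together with \eqref{eq:H2_bound_yuh} and \eqref{eq:state_priori_estimate_L^infty}; you treat $y-y(u_h)$ via its linear equation with nonnegative reaction coefficient and \cite[Theorem B.2]{MR1786735}; and you obtain the convergence under $u_h \rightharpoonup u$, and the final $\liminf$ inequality, from uniform $H^2$ compactness plus uniqueness, exactly as the paper does.
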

\begin{proof}
We present a proof of the first error bound. The $L^{\infty}(\Omega)$ error bound follows directly from the estimates used to derive \eqref{eq:I_h} in the proof of Theorem \ref{thm:aux_error_estimate_adj_state}. We begin with a simple application of the triangle inequality
\begin{equation}\label{eq:triangle_ineq_y-yh}
\|\nabla( y - y_{h})\|_{L^{2}(\Omega)} \leq \|\nabla(y-y(u_h))\|_{L^{2}(\Omega)} + \|\nabla(y(u_h)- y_{h})\|_{L^{2}(\Omega)},
\end{equation}
where $y(u_h) \in H^{1}_{0}(\Omega) \cap L^{\infty}(\Omega)$ is the unique solution of \eqref{eq:yuh}. Since $y_{h}$ is the finite element approximation of $y(u_h)$, standard arguments, together with the uniform $L^{\infty}(\Omega)$-boundedness of $\{ y(u_h) \}_{h>0}$ and $\{ y_h \}_{h>0}$ (see \eqref{eq:estimates_uniform_in_h} and \eqref{eq:yh_uniformly_bounded}, respectively), and the uniform $L^2(\Omega)$-boundedness of $\{u_h\}_{h>0}\subset\mathbb{U}_{ad}$, yield
\begin{equation}\label{eq:yuh-yh}
 \|\nabla(y(u_h) - y_{h})\|_{L^{2}(\Omega)} \lesssim h |y(u_h)|_{H^2(\Omega)} \lesssim h \| f - a(\cdot,0)\|_{L^2(\Omega)},
\end{equation}
where the last bound follows from \eqref{eq:H2_bound_yuh}. From the linear problem satisfied by $y-y(u_{h})$, we obtain $\|\nabla(y-y(u_{h}))\|_{L^{2}(\Omega)} \lesssim \|(u-u_{h}) y(u_h)\|_{L^2(\Omega)} \lesssim \|u-u_{h}\|_{L^{2}(\Omega)}$, where the last bound follows from the uniform $L^{\infty}(\Omega)$-boundedness of $\{ y(u_h) \}_{h>0}$. Combining this bound with \eqref{eq:yuh-yh} and \eqref{eq:triangle_ineq_y-yh} yields the first bound in \eqref{eq:fully_state_estimates}.

We now prove the last two assertions. By \eqref{eq:triangle_ineq_y-yh} and \eqref{eq:yuh-yh}, we have
\begin{equation}
\label{eq:noname}
\|\nabla(y-y_h)\|_{L^2(\Omega)}
\lesssim
\|\nabla(y-y(u_h))\|_{L^2(\Omega)}
+
h\|f-a(\cdot,0)\|_{L^2(\Omega)}.
\end{equation}
We next show that $y(u_h) \rightharpoonup y$ in $H^2(\Omega)$ and, in fact, that $y(u_h) \to y$ in $H_0^1(\Omega) \cap C(\bar\Omega)$. By \eqref{eq:estimates_uniform_in_h} and \eqref{eq:H2_bound_yuh}, $\{y(u_h)\}_{h>0}$ is uniformly bounded in $H^2(\Omega)\cap H_0^1(\Omega)$. Let $\{y(u_{h_k})\}_{k\in\mathbb{N}}$ be an arbitrary subsequence. Up to a further subsequence, not relabeled, there exists $y_\star\in H^2(\Omega)\cap H_0^1(\Omega)$ such that $y(u_{h_k})\rightharpoonup y_\star$ in $H^2(\Omega)$. Since $H^2(\Omega)\cap H_0^1(\Omega)$ is compactly embedded into $H_0^1(\Omega)\cap C(\bar\Omega)$, passing, if necessary, to a further subsequence, again not relabeled, we also have $y(u_{h_k}) \to y_{\star}$ in $H_0^1(\Omega) \cap C(\bar \Omega)$ as $k \uparrow \infty$. Passing to the limit in \eqref{eq:yuh}, using that $u_{h_k} \rightharpoonup u$ in $L^2(\Omega)$ and that $y(u_{h_k}) \to y_{\star}$ in $H_0^1(\Omega) \cap C(\bar \Omega)$ as $k \uparrow \infty$, we conclude that $y_{\star}$ solves \eqref{eq:weak_state_eq}. By uniqueness, $y_{\star} = y$. Since the original subsequence was arbitrary, we conclude that $y(u_h)\rightharpoonup y$ in $H^2(\Omega)$ and $y(u_h)\to y$ in $H_0^1(\Omega)\cap C(\bar\Omega)$ as $h \to 0$. Combining this result with \eqref{eq:noname} yields $y_{h} \rightarrow y$ in $H^{1}_{0}(\Omega)$ as $h \rightarrow 0$. The desired convergence in $C(\bar\Omega)$ follows from the triangle inequality, the convergence $y(u_h)\to y$ in $C(\bar\Omega)$, and the estimate $\|y(u_h)-y_h\|_{L^\infty(\Omega)}
\lesssim h^{2-d/2}.$
Finally, the strong convergence $y_h\to y$ in $C(\bar\Omega)$ and the weak lower semicontinuity of $\|\cdot\|_{L^2(\Omega)}^2$ yield $j(u)\leq\liminf_{h\to0}j_h(u_h)$.
\end{proof}

\subsection{The fully discrete scheme: convergence of discretizations}

We show that a sequence $\{\bar{u}_{h}\}_{h>0}$ of globally optimal controls of the fully discrete scheme admits a subsequence converging to a globally optimal control of \eqref{eq:weak_ocp}--\eqref{eq:weak_state_eq}.

\begin{theorem}[convergence of global solutions]\label{thm:conv_global_sol}
Assume that $\Omega$ is a convex polytope and that \ref{A1}--\ref{A3} hold. Let $\{\bar{u}_{h}\}_{h>0}$ be such that $\bar{u}_{h} \in \mathbb{U}_{ad,h}$ is a globally optimal control of the fully discrete scheme. Then, there exists a nonrelabeled subsequence of $\{\bar{u}_{h}\}_{h>0}$ such that $\bar{u}_{h} \rightharpoonup \bar{u}$ in $L^{2}(\Omega)$ as $h \rightarrow 0$ and $\bar{u}$ is a globally optimal control of  \eqref{eq:weak_ocp}--\eqref{eq:weak_state_eq}. In addition, we have the following properties:
\begin{equation}
\label{eq:conv_properties_global_solutions}
\lim_{h \rightarrow 0}\|\bar{u}-\bar{u}_{h}\|_{L^{2}(\Omega)} = 0, \quad \lim_{h \rightarrow 0}j_{h}(\bar{u}_{h}) = j(\bar{u}).
\end{equation}     
\end{theorem}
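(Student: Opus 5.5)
The argument follows the standard Casas--Tröltzsch route: compactness of the discrete global minimizers, a $\liminf$ bound from Theorem \ref{thm:aux_error_estimate_state}, a recovery sequence built from projections, and the strong convexity of the Tikhonov term to upgrade weak to strong convergence.

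\emph{Step 1 (weak limit).} Since $\mathbb{U}_{ad,h}\subset\mathbb{U}_{ad}$ and $\mathbb{U}_{ad}$ is bounded, closed and convex in $L^2(\Omega)$, the family $\{\bar u_h\}_{h>0}$ is bounded. Hence some nonrelabeled subsequence satisfies $\bar u_h\rightharpoonup\bar u$ in $L^2(\Omega)$. The set $\mathbb{U}_{ad}$ is weakly closed, so $\bar u\in\mathbb{U}_{ad}$. Theorem \ref{thm:aux_error_estimate_state} then gives $\bar y_h=\mathcal S_h(\bar u_h)\to\bar y=\mathcal S(\bar u)$ in $H_0^1(\Omega)\cap C(\bar\Omega)$, together with $j(\bar u)\le\liminf_{h\to0} j_h(\bar u_h)$.

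\emph{Step 2 (global optimality via a recovery sequence).} Fix an arbitrary $u\in\mathbb{U}_{ad}$ and let $u_h:=\Pi_h u$ be its $L^2(\Omega)$-projection onto $\mathbb{U}_h$, i.e., elementwise averages. Averages of a function with values in $[\mathtt a,\mathtt b]$ stay in $[\mathtt a,\mathtt b]$, so $u_h\in\mathbb{U}_{ad,h}$. Moreover $u_h\to u$ strongly in $L^2(\Omega)$, by density of smooth functions and quasi-uniformity of the meshes. Theorem \ref{thm:aux_error_estimate_state} gives $\mathcal S_h(u_h)\to\mathcal S(u)$ in $C(\bar\Omega)$. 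Combined with $\|u_h\|_{L^2(\Omega)}\to\|u\|_{L^2(\Omega)}$, this yields $j_h(u_h)\to j(u)$. Discrete global optimality gives $j_h(\bar u_h)\le j_h(u_h)$. Therefore
\[
j(\bar u)\le\liminf_{h\to0} j_h(\bar u_h)\le\limsup_{h\to0} j_h(\bar u_h)\le\lim_{h\to0} j_h(u_h)=j(u).
\]
Hence $\bar u$ is globally optimal. Choosing $u=\bar u$ in the same chain forces $\lim_{h\to0} j_h(\bar u_h)=j(\bar u)$, which is the second property in \eqref{eq:conv_properties_global_solutions}.

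\emph{Step 3 (strong convergence).} This is the step requiring the most care, although it is short. From Step 1, $\bar y_h\to\bar y$ uniformly, so the tracking terms satisfy
\[
\tfrac12\sum_{t\in\mathcal D}(\bar y_h(t)-y_t)^2\to\tfrac12\sum_{t\in\mathcal D}(\bar y(t)-y_t)^2 .
\]
Subtracting this limit from $j_h(\bar u_h)\to j(\bar u)$ and using $\alpha>0$ gives $\|\bar u_h\|_{L^2(\Omega)}\to\|\bar u\|_{L^2(\Omega)}$. Weak convergence together with convergence of norms in the Hilbert space $L^2(\Omega)$ yields $\|\bar u-\bar u_h\|_{L^2(\Omega)}\to0$, completing \eqref{eq:conv_properties_global_solutions}.

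The only point needing attention beyond this is in Step 2: verifying that $\Pi_h$ preserves the box constraints and converges strongly. Both follow from elementwise averaging, so I expect no real obstacle.
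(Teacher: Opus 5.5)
Your proposal is correct and follows essentially the same route as the paper, which defers to the standard argument of \cite[Theorem 39]{MR3586845}. That argument uses the same ingredients as yours: weak compactness of $\{\bar u_h\}$, the $\liminf$ inequality and uniform state convergence from Theorem \ref{thm:aux_error_estimate_state}, a recovery sequence in $\mathbb{U}_{ad,h}$ obtained by elementwise projection, and convergence of the Tikhonov term to upgrade weak to strong convergence. The only difference is that the paper also invokes the $H^1(\Omega)$-regularity of Theorem \ref{thm:H1_reg}; your density argument for $\Pi_h u \to u$ in $L^2(\Omega)$ shows that this regularity is not needed for this result.
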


\begin{proof}
Since $\{\bar{u}_{h}\}_{h>0}$ is uniformly bounded in $L^{2}(\Omega)$, there exists a nonrelabeled subsequence $\{\bar{u}_{h}\}_{h>0}$ such that $\bar{u}_{h} \rightharpoonup \bar{u}$ in $L^{2}(\Omega)$ as $h \rightarrow 0$. Arguing as in \cite[Theorem 39]{MR3586845}, and using the convergence properties of Theorem \ref{thm:aux_error_estimate_state} together with the regularity result of Theorem \ref{thm:H1_reg}, we conclude that $\bar{u}$ is a globally optimal control of \eqref{eq:weak_ocp}--\eqref{eq:weak_state_eq} and that the properties in \eqref{eq:conv_properties_global_solutions} hold. For brevity, we omit the details.
\end{proof}

\begin{theorem}[convergence of local solutions]\label{thm:conv_local_sol}
Assume that $\Omega$ is a convex polytope and that \ref{A1}--\ref{A3} hold. Let $\bar{u} \in \mathbb{U}_{ad}$ be a strict locally optimal control of \eqref{eq:weak_ocp}--\eqref{eq:weak_state_eq}. Then, there exist $h_{\square}>0$ and a sequence of locally optimal controls $\{\bar{u}_{h}\}_{0<h\leq h_{\square}}$ of the fully discrete scheme such that \eqref{eq:conv_properties_global_solutions} holds.
\end{theorem}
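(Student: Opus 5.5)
This follows the localization technique of Casas--Tröltzsch (cf.~\cite[Theorem 39]{MR3586845}). Since $\bar{u}$ is a strict local minimizer, we fix $\varepsilon>0$ such that $j(\bar{u})<j(u)$ for all $u\in\mathbb{U}_{ad}\setminus\{\bar{u}\}$ with $\|u-\bar{u}\|_{L^2(\Omega)}\le\varepsilon$. We then consider the auxiliary localized discrete problems
\[
\min\, j_h(u_h) \quad \text{subject to} \quad u_h\in \mathbb{U}_{ad,h}^{\varepsilon}:=\{u_h\in\mathbb{U}_{ad,h}:\ \|u_h-\bar{u}\|_{L^2(\Omega)}\le\varepsilon\}.
\]
This set is nonempty for $h$ small: the $L^2$-projection $P_h\bar{u}$ onto piecewise constants lies in $\mathbb{U}_{ad,h}$, and since $\bar{u}\in H^1(\Omega)$ by Theorem~\ref{thm:H1_reg}, we have $\|\bar{u}-P_h\bar{u}\|_{L^2(\Omega)}\lesssim h$. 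The set is also a compact subset of the finite-dimensional space $\mathbb{U}_h$, and $j_h$ is continuous on it. Hence a global minimizer $\bar{u}_h$ of the auxiliary problem exists.

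Next we show $\bar{u}_h\to\bar{u}$ strongly in $L^2(\Omega)$. The family $\{\bar{u}_h\}$ is bounded, so any subsequence has a further subsequence with $\bar{u}_h\rightharpoonup\tilde{u}$ in $L^2(\Omega)$. The limit $\tilde{u}$ lies in $\mathbb{U}_{ad}$ and satisfies $\|\tilde{u}-\bar{u}\|_{L^2(\Omega)}\le\varepsilon$, since closed convex sets are weakly closed. Theorem~\ref{thm:aux_error_estimate_state} gives $j(\tilde{u})\le\liminf j_h(\bar{u}_h)$. Using the optimality of $\bar{u}_h$ against $P_h\bar{u}$, we obtain $j_h(\bar{u}_h)\le j_h(P_h\bar{u})$. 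The right-hand side converges to $j(\bar{u})$: by the same theorem, $\mathcal{S}_h(P_h\bar{u})\to\bar{y}$ in $C(\bar\Omega)$, so the point evaluations converge, and $\|P_h\bar{u}\|_{L^2(\Omega)}\to\|\bar{u}\|_{L^2(\Omega)}$. Therefore
\[
j(\tilde{u})\le\liminf j_h(\bar{u}_h)\le\limsup j_h(\bar{u}_h)\le j(\bar{u}).
\]
Strict local optimality then forces $\tilde{u}=\bar{u}$, and all inequalities become equalities. Since the limit does not depend on the subsequence, the whole family converges weakly and $j_h(\bar{u}_h)\to j(\bar{u})$.

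The main obstacle is upgrading weak to strong convergence. Because $\bar{y}_h:=\mathcal{S}_h(\bar{u}_h)\to\bar{y}$ in $C(\bar\Omega)$, the tracking terms $\tfrac12\sum_{t}(\bar{y}_h(t)-y_t)^2$ converge to their continuous counterparts. Combined with $j_h(\bar{u}_h)\to j(\bar{u})$, this gives $\|\bar{u}_h\|_{L^2(\Omega)}\to\|\bar{u}\|_{L^2(\Omega)}$. Weak convergence together with convergence of norms in the Hilbert space $L^2(\Omega)$ yields strong convergence, which establishes \eqref{eq:conv_properties_global_solutions}.

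Finally, strong convergence implies that, for all $h\le h_\square$, we have $\|\bar{u}_h-\bar{u}\|_{L^2(\Omega)}<\varepsilon/2$, so the constraint is inactive. Any $u_h\in\mathbb{U}_{ad,h}$ with $\|u_h-\bar{u}_h\|_{L^2(\Omega)}\le\varepsilon/2$ belongs to $\mathbb{U}_{ad,h}^{\varepsilon}$, and hence $j_h(\bar{u}_h)\le j_h(u_h)$. This means $\bar{u}_h$ is a locally optimal control of the fully discrete scheme, which completes the plan.
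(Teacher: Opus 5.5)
Your proposal is correct and takes the same route as the paper. The paper's proof only refers to the argument of \cite[Theorem 5.3]{MR4438718}, which is exactly this localization technique: auxiliary discrete problems posed on $\mathbb{U}_{ad,h}$ intersected with the closed $\varepsilon$-ball around $\bar{u}$, identification of the weak limit via Theorem~\ref{thm:aux_error_estimate_state} and strict local optimality, convergence of norms to obtain strong convergence, and inactivity of the localizing constraint for small $h$.
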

\begin{proof}
The result follows by arguing as in the proof of \cite[Theorem 5.3]{MR4438718}. For brevity, we omit the details.
\end{proof}

\subsection{The semidiscrete scheme: convergence of discretizations}\label{sec:semidiscete_convergence}
We now establish the analogues of Theorems \ref{thm:conv_global_sol}
and \ref{thm:conv_local_sol} for the semidiscrete scheme.
%
%
\begin{theorem}[convergence of the semidiscrete scheme]\label{thm:conv_semidisc}
Assume that $\Omega$ is a convex polytope and that \ref{A1}--\ref{A3} hold.
\begin{itemize}[leftmargin=*,nosep]
\item[(i)] Let $\{\bar{\mathfrak{u}}_{h}\}_{h>0}$ be such that $\bar{\mathfrak{u}}_{h} \in \mathbb{U}_{ad}$ is a globally optimal control of the semidiscrete scheme. Then, there exists a nonrelabeled subsequence $\{\bar{\mathfrak{u}}_{h}\}_{h >0}$ such that $\bar{\mathfrak{u}}_{h} \rightharpoonup \bar{u}$ in $L^{2}(\Omega)$ as $h \rightarrow 0$, $\bar{u}$ is a globally optimal control of \eqref{eq:weak_ocp}--\eqref{eq:weak_state_eq}, and \eqref{eq:conv_properties_global_solutions} holds.
\item[(ii)] Let $\bar{u} \in \mathbb{U}_{ad}$ be a strict locally optimal control of \eqref{eq:weak_ocp}--\eqref{eq:weak_state_eq}. Then, there exist $h_{\Delta}>0$ and a sequence of locally optimal controls $\{\bar{\mathfrak{u}}_{h}\}_{0<h\leq h_{\Delta}}$ of the semidiscrete scheme such that \eqref{eq:conv_properties_global_solutions} holds.
\end{itemize}
\end{theorem}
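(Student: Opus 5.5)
The proof follows the fully discrete case (Theorems \ref{thm:conv_global_sol} and \ref{thm:conv_local_sol}). The one real change is that the semidiscrete controls range over all of $\mathbb{U}_{ad}$ rather than $\mathbb{U}_{ad,h}$. Theorem \ref{thm:aux_error_estimate_state} was stated for $u_h\in\mathbb{U}_{ad,h}$, but its proof only uses $u_h\in\mathbb{U}_{ad}$: it relies on the uniform bounds \eqref{eq:estimates_uniform_in_h}, \eqref{eq:H2_bound_yuh} and \eqref{eq:yh_uniformly_bounded}. So the estimates \eqref{eq:fully_state_estimates}, the convergence $y_h\to y$ in $H_0^1(\Omega)\cap C(\bar\Omega)$ under $u_h\rightharpoonup u$, and the liminf inequality $j(u)\le\liminf j_h(u_h)$ all hold for arbitrary $\{u_h\}\subset\mathbb{U}_{ad}$. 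I will state this first.

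\emph{Item (i).} $\{\bar{\mathfrak{u}}_h\}$ is bounded in $L^2(\Omega)$, so a subsequence satisfies $\bar{\mathfrak{u}}_h\rightharpoonup\bar u$, and $\bar u\in\mathbb{U}_{ad}$ because $\mathbb{U}_{ad}$ is closed and convex. For any $u\in\mathbb{U}_{ad}$ the constant sequence $u_h=u$ is admissible for the semidiscrete problem. Using \eqref{eq:fully_state_estimates} with $u_h=u$ gives $\mathcal S_h(u)\to\mathcal S(u)$ in $C(\bar\Omega)$, hence $j_h(u)\to j(u)$. Then
\[
j(\bar u)\le\liminf j_h(\bar{\mathfrak{u}}_h)\le\limsup j_h(\bar{\mathfrak{u}}_h)\le\lim j_h(u)=j(u),
\]
so $\bar u$ is globally optimal. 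Taking $u=\bar u$ gives $j_h(\bar{\mathfrak{u}}_h)\to j(\bar u)$. Since $\bar y_h\to\bar y$ uniformly, the tracking terms converge, and therefore $\|\bar{\mathfrak{u}}_h\|_{L^2}\to\|\bar u\|_{L^2}$. Weak convergence together with convergence of norms gives strong convergence, which is \eqref{eq:conv_properties_global_solutions}. Unlike the fully discrete case, no interpolant of $\bar u$, and hence no use of Theorem \ref{thm:H1_reg}, is needed.

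\emph{Item (ii).} I would use the standard localization argument of \cite[Theorem 5.3]{MR4438718}. Let $\varepsilon>0$ be such that $\bar u$ is the unique minimizer of $j$ on $\mathbb{U}_{ad}\cap\bar B_\varepsilon(\bar u)$. Consider the auxiliary problems $\min j_h(u)$ over $u\in\mathbb{U}_{ad}\cap\bar B_\varepsilon(\bar u)$. This set is weakly compact and $j_h$ is weakly lower semicontinuous on it: weak convergence of controls gives convergence of $\mathcal S_h$ in the finite-dimensional space $\mathbb V_h$, by the argument of Theorem \ref{thm:aux_error_estimate_state} at fixed $h$. Hence a minimizer $\bar{\mathfrak{u}}_h$ exists. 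Repeating the argument of (i) with comparison control $u=\bar u$, every weak limit point $\tilde u$ lies in $\bar B_\varepsilon(\bar u)$ and satisfies $j(\tilde u)\le j(\bar u)$. Strict local optimality forces $\tilde u=\bar u$, so the whole family converges weakly. As before, $j_h(\bar{\mathfrak{u}}_h)\to j(\bar u)$, and the norm argument gives strong convergence. Consequently there is $h_\Delta>0$ such that $\|\bar{\mathfrak{u}}_h-\bar u\|_{L^2}<\varepsilon/2$ for $h\le h_\Delta$. Then the ball constraint is inactive, so $\bar{\mathfrak{u}}_h$ minimizes $j_h$ over $\mathbb{U}_{ad}\cap B_{\varepsilon/2}(\bar{\mathfrak{u}}_h)$ and is a local solution of the semidiscrete scheme.

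The main point to check carefully is that the uniform estimates of Theorem \ref{thm:aux_error_estimate_state} really hold for general $u_h\in\mathbb{U}_{ad}$, and in particular the uniform $H^2$ bound \eqref{eq:H2_bound_yuh}. This is the only step that differs in substance from the fully discrete case; everything else is routine.
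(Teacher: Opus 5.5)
Your proposal is correct and follows essentially the same route as the paper. The paper's proof just notes that the proof of Theorem \ref{thm:aux_error_estimate_state} applies verbatim to sequences in $\mathbb{U}_{ad}$, and then adapts \cite[Theorem 39]{MR3586845} for (i) and the localization argument of \cite[Theorem 5.3]{MR4438718} for (ii). You fill in the details the paper omits, including the correct observation that, because every $u\in\mathbb{U}_{ad}$ is itself admissible for the semidiscrete problem, no discrete approximation of the comparison control (and hence no appeal to Theorem \ref{thm:H1_reg}) is needed.
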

\begin{proof}
The proofs follow from adapting \cite[Theorem 39]{MR3586845} and \cite[Theorem 5.3]{MR4438718}. Note that the proof of Theorem \ref{thm:aux_error_estimate_state} applies verbatim to $\{\mathfrak{\bar{u}}_{h}\}_{h>0} \subset \mathbb{U}_{ad}$.
\end{proof}

\section{Error estimates}\label{sec:error_estimates}
We derive error estimates for the fully discrete and semidiscrete schemes introduced in Sections \ref{sec:FD_scheme} and \ref{sec:SD_scheme}, respectively. Throughout this section, we assume that $\Omega$ is a convex polytope.

\subsection{Error estimates for the fully discrete scheme}\label{sec:error_estimates_fully} 
 Let $\{\bar{u}_{h}\}_{h>0}$ be a sequence of locally optimal controls for the fully discrete scheme such that $\bar{u}_h \in \mathbb{U}_{ad,h}$ and $\bar{u}_{h} \rightarrow \bar{u}$ in $L^{2}(\Omega)$ as $h \rightarrow 0$, where $\bar{u}$ is a locally optimal control of \eqref{eq:weak_ocp}--\eqref{eq:weak_state_eq} satisfying \eqref{eq:equiv_cond}; see Theorems \ref{thm:conv_global_sol} and \ref{thm:conv_local_sol}.
Our goal is to derive the error estimate
\begin{equation}\label{eq:error_estimate_control}
 \|\bar{u}-\bar{u}_{h}\|_{L^{2}(\Omega)} \lesssim h |\log h| \quad \forall h < h_{\ddagger}, \quad h_{\ddagger} > 0.
\end{equation}
We proceed by contradiction, following \cite[Lemma 4.2]{MR2350349}. Assume that \eqref{eq:error_estimate_control} does not hold. Then, there exists a sequence $\{h_{k}\}_{k \in \mathbb{N}} \subset \mathbb{R}^{+}$ with $h_k \to 0$ as $k \to \infty$ such that
\begin{equation}\label{eq:contradict_assumption}
\lim_{k \to \infty}\|\bar{u}-\bar{u}_{h_{k}}\|_{L^{2}(\Omega)} = 0,
\qquad
\lim_{k \to \infty}\dfrac{\|\bar{u}-\bar{u}_{h_{k}}\|_{L^{2}(\Omega)}}{h_{k}|\log h_{k}|}  = +\infty.
\end{equation}
The first limit follows from $\bar{u}_{h}\to\bar{u}$ in $L^{2}(\Omega)$. We now establish the following bound.

\begin{lemma}[auxiliary bound]\label{lem:aux_result}
Assume that \ref{A1}--\ref{A3} hold and that $\bar{u} \in \mathbb{U}_{ad}$ satisfies the second order optimality condition \eqref{eq:equiv_cond}. Let $\{h_k \}_{k\in\mathbb{N}} \subset \mathbb{R}^{+}$ be a sequence satisfying \eqref{eq:contradict_assumption}. Then, there exists $k_{\dagger} \in \mathbb{N}$ such that
\begin{equation}\label{eq:aux_control_error_bound}
\mathfrak{C} \|\bar{u}-\bar{u}_{h_{k}}\|^{2}_{L^{2}(\Omega)} \leq [j'(\bar{u}_{h_{k}})-j'(\bar{u})](\bar{u}_{{h}_{k}}-\bar{u})
\qquad
\forall k \geq k_{\dagger},
\end{equation} 
where $\mathfrak{C} = 2^{-1}\min\{\mu,\alpha\}$, $\alpha$ is the regularization parameter, and $\mu>0$ is as in \eqref{eq:equiv_cond}.
\end{lemma}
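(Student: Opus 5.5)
This is the standard argument of Casas and Tröltzsch (cf.~\cite[Lemma 4.2]{MR2350349}), adapted to the bilinear, pointwise-tracking setting. Set $g_k := (\bar{u}_{h_k}-\bar{u})/\|\bar{u}_{h_k}-\bar{u}\|_{L^2(\Omega)}$. Passing to a subsequence, $g_k \rightharpoonup g$ in $L^2(\Omega)$.

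\emph{Step 1: $g \in C_{\bar u}$.} Each $g_k$ satisfies \eqref{eq:sign_cond}, because $\bar u_{h_k}\in\mathbb{U}_{ad}$. This sign condition describes a closed convex set, so it passes to the weak limit $g$. It remains to show that $\bar{\mathfrak{p}}g = 0$ a.e.; by \eqref{eq:sign_con_j_prime} we have $\bar{\mathfrak{p}}g \ge 0$, so it suffices to prove $\int_\Omega \bar{\mathfrak{p}} g\,\mathrm{d}x \le 0$. Write
\[
\int_\Omega \bar{\mathfrak{p}} g_k
= \frac{1}{\|\bar u_{h_k}-\bar u\|}\Big[(\alpha\bar u_{h_k}-\bar y_{h_k}\bar p_{h_k},\bar u_{h_k}-\bar u) + (\alpha(\bar u-\bar u_{h_k}) - \bar y\bar p + \bar y_{h_k}\bar p_{h_k},\bar u_{h_k}-\bar u)\Big].
\]
The second bracket tends to zero: use Theorems~\ref{thm:aux_error_estimate_state} and \ref{thm:aux_error_estimate_adj_state}, together with the uniform $L^\infty$ bounds on $\bar y_{h_k}$ and $\bar p_{h_k}$ in $L^2$, to get $\bar y_{h_k}\bar p_{h_k}\to\bar y\bar p$ in $L^2$.

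For the first bracket, split $\bar u_{h_k}-\bar u = (\bar u_{h_k}-P_{h_k}\bar u)+(P_{h_k}\bar u-\bar u)$, where $P_h$ is the $L^2$ projection onto $\mathbb{U}_h$; note $P_h\bar u\in\mathbb{U}_{ad,h}$. By \eqref{eq:fully_var_ineq}, the first piece is $\le 0$. The second piece is bounded by $\|\alpha\bar u_{h_k}-\bar y_{h_k}\bar p_{h_k}\|_{L^2}\,\|P_{h_k}\bar u-\bar u\|_{L^2} \lesssim h_k$, since $\bar u\in H^1(\Omega)$ by Theorem~\ref{thm:H1_reg}. After dividing by $\|\bar u_{h_k}-\bar u\|$, this tends to zero by \eqref{eq:contradict_assumption}. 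Here the $|\log h|$ factor is harmless slack. A sharper route is to subtract $P_h(\bar y\bar p)$ and use orthogonality, but it is not needed. Weak convergence then gives $\int_\Omega\bar{\mathfrak{p}} g\le 0$.

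\emph{Step 2: the second-order expansion.} By the mean value theorem there is $\hat u_k$ on the segment between $\bar u$ and $\bar u_{h_k}$, hence $\hat u_k\in\mathbb{U}_{ad}$, with
\[
[j'(\bar u_{h_k})-j'(\bar u)](\bar u_{h_k}-\bar u) = \|\bar u_{h_k}-\bar u\|^2\, j''(\hat u_k)g_k^2 .
\]
Using \eqref{eq:DDj}, $j''(\hat u_k)g_k^2 = \alpha + \int(-z_k p_k - y_k\eta_k)g_k$. Under the weak convergence $g_k\rightharpoonup g$, the derivatives $z_k=\mathcal S'(\hat u_k)g_k$ converge strongly in $C(\bar\Omega)$, because the solution maps are compact via the $H^2$ / $W^{1,\kappa}$ bounds. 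Likewise $\eta_k\to\eta$ strongly in $L^2$, using $W^{1,r}\hookrightarrow L^2$ compactly, and $y_k, p_k$ converge strongly. Hence
\[
\liminf_k j''(\hat u_k)g_k^2 = \alpha + \int(-zp-y\eta)g = j''(\bar u)g^2 + \alpha(1-\|g\|^2).
\]
The main obstacle is justifying this strong convergence of the singular adjoint derivative $\eta_k$, which carries the Dirac data $z_k(t)\delta_t$. I will use the $W^{1,r}$ stability of \eqref{eq:DPhi}, uniform in $\mathbb{U}_{ad}$ (Theorem~\ref{thm:wp_adjoint}), plus compactness.

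\emph{Step 3: conclusion.} Since $g\in C_{\bar u}\subset C_{\bar u}^\tau$, condition \eqref{eq:equiv_cond} gives $j''(\bar u)g^2\ge\mu\|g\|^2$. Therefore the liminf is at least $\alpha + (\mu-\alpha)\|g\|^2 \ge \min\{\mu,\alpha\}$, using $\|g\|\le1$. Consequently $j''(\hat u_k)g_k^2\ge \tfrac12\min\{\mu,\alpha\}=\mathfrak C$ for all $k\ge k_\dagger$, which is \eqref{eq:aux_control_error_bound}.
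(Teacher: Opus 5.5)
Your proposal is correct and follows essentially the same route as the paper: you normalize $g_k$, show $g\in C_{\bar u}$ via the $L^2$ projection onto $\mathbb{U}_h$, the discrete variational inequality and $\bar u\in H^1(\Omega)$, and then pass to the limit in $j''(\hat u_k)g_k^2$ after the mean value theorem to obtain the lower bound $\min\{\mu,\alpha\}$. Two small points need tightening. First, because you extracted a subsequence, the bound for the whole sequence $k\geq k_\dagger$ requires the usual subsequence-of-a-subsequence argument, which the paper states explicitly. Second, compactness of $W^{1,r}_0(\Omega)\hookrightarrow L^2(\Omega)$ for $d=3$ requires choosing $r\in(6/5,3/2)$ rather than $r=6/5$.
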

\begin{proof}
We omit the index $k$ to simplify the notation and write $h$ and $\bar{u}_{h}$ in place of $h_k$ and $\bar{u}_{h_k}$, respectively. Applying the mean value theorem, we obtain
\begin{equation}\label{eq:mvt_auxiliary_error_control}
[j'(\bar{u}_{h})-j'(\bar{u})](\bar{u}_{h}-\bar{u})
=
j''(\hat{u}_{h})(\bar{u}_{h}-\bar{u})^{2},
\quad
\hat{u}_{h}
=
\bar{u}+\theta_{h}(\bar{u}_{h}-\bar{u}),
\quad
\theta_{h}\in(0,1).
\end{equation}
Define $g_{h} := (\bar{u}_{h}-\bar{u})/\|\bar{u}_{h}-\bar{u}\|_{L^{2}(\Omega)}$. Then, $\|g_{h} \|_{L^2(\Omega)} = 1$. Thus, up to a nonrelabeled subsequence, $g_{h} \rightharpoonup g$ in $L^{2}(\Omega)$ as $h \rightarrow 0$. We now proceed in two steps.

\emph{Step 1.} We prove that $g \in C_{\bar{u}}$. Since $\bar{u}_{h} \in \mathbb{U}_{ad,h}$, $g_{h}$ satisfies the sign conditions \eqref{eq:sign_cond}. This fact, together with the weak convergence $g_{h} \rightharpoonup g$ in $L^{2}(\Omega)$ as $h \rightarrow 0$, implies that $g$ satisfies \eqref{eq:sign_cond}. It remains to prove that $g(x)=0$ for a.e.~$x\in\Omega$ such that $\bar{\mathfrak{p}}(x)\neq0$, where $\bar{\mathfrak{p}} = \alpha\bar{u} - \bar{y}\bar{p}$ (cf.~\eqref{eq:cone_critical}). For this purpose, we introduce $\bar{\mathfrak{p}}_{h} := \alpha \bar{u}_{h} - \bar{y}_{h} \bar{p}_{h}$, where $\bar{y}_{h} = \mathcal{S}_{h}(\bar{u}_{h})$ and $\bar{p}_{h} \in \mathbb{V}_{h}$ is the unique solution of \eqref{eq:discr_adj_eq} with $y_h=\bar y_h$ and $u_h=\bar u_h$. We now estimate $\|\bar{\mathfrak{p}} - \bar{\mathfrak{p}}_{h}\|_{L^{2}(\Omega)}$. Using the second bound in \eqref{eq:stability_weak_problem}, Theorems \ref{thm:aux_error_estimate_adj_state} and \ref{thm:aux_error_estimate_state}, and $\|\bar{u}-\bar{u}_{h}\|_{L^{2}(\Omega)} \rightarrow 0$ as $h \rightarrow 0$, we obtain
\begin{multline*}
 \|\bar{\mathfrak{p}}-\bar{\mathfrak{p}}_{h}\|_{L^{2}(\Omega)} \leq \alpha\|\bar{u}-\bar{u}_{h}\|_{L^{2}(\Omega)} +  \|\bar{y}\|_{L^{\infty}(\Omega)} \|\bar{p}-\bar{p}_{h}\|_{L^{2}(\Omega)} \\
+ \|\bar{p}_{h}\|_{L^{2}(\Omega)} \|\bar{y}-\bar{y}_{h}\|_{L^{\infty}(\Omega)} \lesssim \|\bar{u}-\bar{u}_{h}\|_{L^{2}(\Omega)} + h^{2 - \tfrac{d}{2}} \rightarrow 0, \quad h \rightarrow 0.
\end{multline*}
Hence, $\bar{\mathfrak{p}}_{h} \rightarrow \bar{\mathfrak{p}}$ in $L^{2}(\Omega)$ as $h \rightarrow 0$. Combining this with $g_h \rightharpoonup g$ in $L^{2}(\Omega)$, we obtain
\[
\int_{\Omega} \bar{\mathfrak{p}} g \, \mathrm{d}x = \lim_{h \rightarrow 0} \frac{1}{\|\bar{u}_{h}-\bar{u}\|_{L^{2}(\Omega)}}\left[ \int_{\Omega}\bar{\mathfrak{p}}_{h}(\bar{u}_{h}-\Pi_{h}(\bar{u}))\mathrm{d}x + \int_{\Omega}\bar{\mathfrak{p}}_{h}(\Pi_{h}(\bar{u})-\bar{u})\mathrm{d}x\right].
\]
Here, $\Pi_{h}$ denotes the $L^{2}$-orthogonal projection onto $\mathbb{U}_{h}$. Since $\Pi_{h}(\bar{u}) \in \mathbb{U}_{ad,h}$, the discrete variational inequality \eqref{eq:fully_var_ineq} yields $(\bar{\mathfrak{p}}_{h},\bar{u}_{h} - \Pi_{h}(\bar{u}))_{L^{2}(\Omega)} \leq 0$. Moreover, $\bar{\mathfrak{p}}_{h} \rightarrow \bar{\mathfrak{p}}$ in $L^{2}(\Omega)$ as $h \rightarrow 0$ implies the existence of $\mathfrak{h}>0$ such that $\{\bar{\mathfrak{p}}_{h}\}_{0<h\leq\mathfrak{h}}$ is uniformly bounded in $L^2(\Omega)$. Since $\bar{u} \in H^{1}(\Omega)$ by Theorem \ref{thm:H1_reg}, we have $\|\Pi_{h}(\bar{u})-\bar{u}\|_{L^{2}(\Omega)} \lesssim h|\bar{u}|_{H^{1}(\Omega)}$. These estimates and properties, in view of \eqref{eq:contradict_assumption}, yield
\begin{equation*}
	\begin{split}
\int_{\Omega} \bar{\mathfrak{p}} g \mathrm{d}x
& \leq
 \lim_{h \rightarrow 0}\frac{1}{\|\bar{u}_{h}-\bar{u}\|_{L^{2}(\Omega)}} \left| \int_{\Omega} \bar{\mathfrak{p}}_{h}(\Pi_{h}(\bar{u}) - \bar{u})\mathrm{d}x \right|
\lesssim
\lim_{h \rightarrow 0} \frac{\|\Pi_{h}(\bar{u})-\bar{u}\|_{L^{2}(\Omega)}}{\|\bar{u}_{h}-\bar{u}\|_{L^{2}(\Omega)}}
\\
 & \leq C\lim_{h \rightarrow 0}
 \frac{h}{\|\bar{u}_{h}-\bar{u}\|_{L^{2}(\Omega)}}
 \leq C
 \lim_{h \rightarrow 0}
 \frac{h |\log h | }{\|\bar{u}_{h}-\bar{u}\|_{L^{2}(\Omega)}}
 = 0.
\end{split}
\end{equation*}
Since $g$ satisfies \eqref{eq:sign_cond}, \eqref{eq:sign_con_j_prime} yields $\bar{\mathfrak{p}}(x)g(x) \geq 0$ for a.e.~$x\in\Omega$, so that $|\bar{\mathfrak{p}}(x)g(x)| = \bar{\mathfrak{p}}(x)g(x)$. The previous estimate yields $\int_{\Omega}|\bar{\mathfrak{p}}g|\,\mathrm{d}x \leq 0$, whence $\bar{\mathfrak{p}}(x)g(x) = 0$ for a.e. $x \in \Omega$; that is, $g(x)=0$ for a.e.~$x\in\Omega$ such that $\bar{\mathfrak p}(x)\neq0$. This shows that $g \in C_{\bar{u}}$.

\emph{Step 2.} We now prove the required lower bound for $j''(\hat{u}_{h})g_h^2$. By \eqref{eq:DDj},
\begin{equation}
j''(\hat{u}_{h})g^{2}_{h} = \alpha \|g_{h}\|^{2}_{L^{2}(\Omega)}
-
\int_{\Omega} \left[ z(\hat{u}_{h})p(\hat{u}_{h})+ y(\hat{u}_{h}) \eta(\hat{u}_{h}) \right] g_{h} \,\mathrm{d}x.
\end{equation}
Here, $y(\hat{u}_{h}) = \mathcal{S}(\hat{u}_{h})$, $z(\hat{u}_{h}) = \mathcal{S}'(\hat{u}_{h})g_{h}$, $p(\hat{u}_{h}) = \Phi(\hat{u}_{h})$, and $\eta(\hat{u}_{h}) = \Phi'(\hat{u}_{h})g_{h}$. Arguing as in \cite[Theorem 4.8]{MR5008466}, we obtain $y(\hat{u}_{h}) \rightarrow \bar{y}$ and $z(\hat{u}_{h}) \rightarrow \bar{z}$ in $H^{1}_{0}(\Omega) \cap C^{0,\varsigma}(\bar{\Omega})$, and $p(\hat{u}_{h}) \rightarrow \bar{p}$ and $\eta(\hat{u}_{h}) \rightarrow \bar{\eta}$ in $W^{1,r}_{0}(\Omega)$ as $h \rightarrow 0$, where $\bar{y} = \mathcal{S}(\bar{u})$, $\bar{z} = \mathcal{S}'(\bar{u})g$, $\bar{p} = \Phi(\bar{u})$, and $\bar{\eta} = \Phi'(\bar{u})g$. Since $\|g_{h}\|_{L^{2}(\Omega)}=1$, the first term equals $\alpha$. The above convergences imply $z(\hat{u}_{h})p(\hat{u}_{h}) \rightarrow \bar{z}\bar{p}$ and $y(\hat{u}_{h})\eta(\hat{u}_{h}) \rightarrow \bar{y}\bar{\eta}$ in $L^{2}(\Omega)$. Combining these convergences with $g_{h} \rightharpoonup g$ in $L^{2}(\Omega)$, \eqref{eq:DDj}, and \eqref{eq:equiv_cond}, we obtain
\begin{equation*}
\lim_{h \rightarrow 0}j''(\hat{u}_{h})g^{2}_{h} = \alpha - (\bar{z}\bar{p}+\bar{y}\bar{\eta},g)_{L^{2}(\Omega)} = \alpha + j''(\bar{u})g^{2} - \alpha \|g\|^{2}_{L^{2}(\Omega)} \geq \alpha + (\mu - \alpha) \|g\|^{2}_{L^{2}(\Omega)}.
\end{equation*}
Since $\|g\|_{L^{2}(\Omega)} \leq 1$, $\lim_{h \rightarrow 0}j''(\hat{u}_{h})g^{2}_{h} \geq \min\{\mu,\alpha\}>0$ for the extracted subsequence. Since every subsequence admits a further subsequence to which the above argument applies, we conclude that $j''(\hat{u}_{h})g^{2}_{h} > 2^{-1} \min\{\mu,\alpha\}$ for $h$ sufficiently small. Combining this bound with the definition of $g_h$ and  \eqref{eq:mvt_auxiliary_error_control} and returning to the original notation, we deduce that there exists $k_{\dagger}\in\mathbb{N}$ such that \eqref{eq:aux_control_error_bound} holds for every $k\geq k_{\dagger}$.
\end{proof}

We now provide auxiliary bounds that will be used for both schemes.

\begin{lemma}[auxiliary bounds for $j'$ and $j'_{h}$]\label{lem:estimates_for_j'_j'h} Assume that \ref{A1}--\ref{A3} hold and that $f, a(\cdot,0) \in L^{\infty}(\Omega)$. Let $u_{1},u_{2} \in \mathbb{U}_{ad}$. Then, there exists $h_{\odot}>0$, independent of $u_1,u_2\in\mathbb U_{ad}$, such that for every $h<h_{\odot}$,
\begin{align}
 & |j'(u_{1})g - j'_{h}(u_{1})g| \lesssim h^{2}|\log h|^{2} \|g\|_{L^{\infty}(\Omega)} \quad \forall g \in L^{\infty}(\Omega),
 \label{eq:aux_j-jh_estimate_1} \\
 & |j'_{h}(u_{1})g - j'_{h}(u_{2})g| \lesssim \|u_{1}-u_{2}\|_{L^{2}(\Omega)} \|g\|_{L^{2}(\Omega)} \quad \forall g \in L^{2}(\Omega).
 \label{eq:aux_j-jh_estimate_2}
\end{align}
The hidden constants are independent of $h$ and of $u_{1},u_{2} \in \mathbb{U}_{ad}$.
\end{lemma}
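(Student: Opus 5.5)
For \eqref{aux_j-jh_estimate_1} we use the representations \eqref{eq:Dj} and \eqref{eq:Djh}. With $y=\mathcal{S}(u_1)$, $p=\Phi(u_1)$, $y_h=\mathcal{S}_h(u_1)$ and $\phi_h$ the solution of \eqref{eq:phi_h} with $\mathtt{u}=u_1$, the $\alpha u_1 g$ terms cancel and
\[
|j'(u_1)g-j_h'(u_1)g| \le \|g\|_{L^\infty(\Omega)}\left(\|(y-y_h)p\|_{L^1(\Omega)}+\|y_h(p-\phi_h)\|_{L^1(\Omega)}\right).
\]
So it is enough to bound $\|y-y_h\|_{L^\infty}\|p\|_{L^1}$ and $\|y_h\|_{L^\infty}\|p-\phi_h\|_{L^1}$ by $h^2|\log h|^2$, uniformly in $u_1\in\mathbb{U}_{ad}$. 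Since $\phi_h$ is built from $y_h$ rather than $y$, we insert the intermediate Galerkin solution $q_h$ of \eqref{eq:weak_adj_eq_discr} (continuous $y$, $u=u_1$) and write $p-\phi_h=(p-q_h)+(q_h-\phi_h)$. The bound \eqref{eq:adj_priori_estimate_L^1} gives $\|p-q_h\|_{L^1}\lesssim h^2|\log h|^2$ uniformly in $u_1$. The factors $\|y_h\|_{L^\infty}$ and $\|p\|_{L^1}$ are uniformly bounded by \eqref{eq:yh_uniformly_bounded} and Theorem~\ref{thm:wp_adjoint}.

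The main obstacle is that Theorem~\ref{thm:priori_state_estimates} gives only $h^{2-d/2}$ for $\|y-y_h\|_{L^\infty(\Omega)}$ globally. We need $h^2|\log h|^2$, which requires global $L^\infty$ estimates of pointwise type. Since $f,a(\cdot,0)\in L^\infty(\Omega)$ and $\Omega$ is convex, $y\in W^{2,\mu}$ for suitable $\mu$ (or at least the right-hand side $f-a(\cdot,y)-u_1y$ is bounded). We then invoke the global pointwise FEM estimates for convex polytopes, $\|y-y_h\|_{L^\infty}\lesssim h^2|\log h|^{2}$, adapting the argument behind \eqref{eq:state_local_L^infty_est} to the whole domain by linearizing the semilinear term and using $a$'s local Lipschitz bounds \ref{A3} with uniform $L^\infty$ bounds. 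If only the local version is available, we split instead. Near $\mathcal{D}$ we use \eqref{eq:state_local_L^infty_est} together with $\|p\|_{L^1}$. Away from $\mathcal{D}$, $p$ lies in $L^\infty$ by Theorem~\ref{thm:H^2-reg-p}, applied with balls around all of $\mathcal{D}$, and we estimate $\|y-y_h\|_{L^1}\lesssim\|y-y_h\|_{L^2}\lesssim h^2$ by \eqref{eq:state_priori_estimate_L^2}. This splitting avoids global $L^\infty$ rates entirely, and we would use it as the primary route.

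For $q_h-\phi_h\in\mathbb{V}_h$, subtracting the two discrete problems gives a discrete adjoint problem with data $\sum_t (y(t)-y_h(t))\delta_t - [\partial_y a(\cdot,y)-\partial_y a(\cdot,y_h)]q_h$. Its $L^1$ norm is controlled by discrete stability in $W^{1,r}$ (or by duality, testing with a discrete $W^{1,s}$-stable solution). This yields the bound $\sum_t|y(t)-y_h(t)|+\|y-y_h\|_{L^2}\|q_h\|_{L^2}$. The point values fall under \eqref{eq:state_local_L^infty_est}, since $\mathcal{D}\Subset\Omega$, and $\|y-y_h\|_{L^2}\lesssim h^2$, so this term is also $h^2|\log h|^2$.

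For \eqref{aux_j-jh_estimate_2}, write $y_{h,i}=\mathcal{S}_h(u_i)$ and let $\phi_{h,i}$ be the corresponding discrete adjoints. Then
\[
|(j_h'(u_1)-j_h'(u_2))g|\le \alpha\|u_1-u_2\|_{L^2}\|g\|_{L^2}+\|y_{h,1}\phi_{h,1}-y_{h,2}\phi_{h,2}\|_{L^2}\|g\|_{L^2}.
\]
We bound $\|y_{h,1}-y_{h,2}\|_{L^\infty}\lesssim\|u_1-u_2\|_{L^2}$ by comparing each $y_{h,i}$ with $y(u_i)$ as in \eqref{eq:I_h}. The $h^{2-d/2}$ terms must not appear here, so we instead work with the discrete linear difference equation and discrete $L^\infty$ stability via discrete Green's function bounds. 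Similarly, $\|\phi_{h,1}-\phi_{h,2}\|_{L^2}\lesssim\|u_1-u_2\|_{L^2}$ follows from discrete $W^{1,r}$ stability of the difference adjoint problem, mirroring the estimate of $\psi$ in Theorem~\ref{thm:aux_error_estimate_adj_state}. Uniform bounds on $\|\phi_{h,i}\|_{L^2}$ and $\|y_{h,i}\|_{L^\infty}$ complete the argument.
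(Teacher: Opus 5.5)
Your proposal is correct, and most of it coincides with the paper. The treatment of $(y-y_h)p$ by splitting near and away from $\mathcal{D}$ is exactly how the paper bounds $\mathfrak{E}_h$: the local bound \eqref{eq:state_local_L^infty_est} near $\mathcal{D}$, and boundedness of $p$ together with \eqref{eq:state_priori_estimate_L^2} away from it. Your proof of \eqref{eq:aux_j-jh_estimate_2} is the paper's product splitting with discrete Lipschitz bounds from the discrete linear difference problems. You are right that going through $y(u_i)$ would leave additive $h^{2-d/2}$ terms.

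The genuine difference is the intermediate adjoint.
\begin{itemize}
\item You insert the Galerkin approximation $q_h$ of $p=\Phi(u_1)$. Then \eqref{eq:adj_priori_estimate_L^1} is used exactly as stated, with the fixed coefficient $\partial_y a(\cdot,y)+u_1$. The cost is that the remainder $q_h-\phi_h$ is discrete and needs a discrete stability estimate.
\item The paper inserts the continuous adjoint $\phi_1$ driven by $y_{1,h}$. The remainder $p_1-\phi_1$ is then handled by the continuous $W^{1,r}$ stability already used for $\psi$ in Theorem \ref{thm:aux_error_estimate_adj_state}. The cost is that \eqref{eq:adj_priori_estimate_L^1} must be applied with the $h$-dependent coefficient $\partial_y a(\cdot,y_{1,h})+u_1$. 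That is what the remark at the end of the proof of Theorem \ref{thm:error_estimate_p} supplies.
\end{itemize}
The data of the difference problem are the same in both routes: point errors $y(t)-y_h(t)$, and $\|y-y_h\|_{L^2(\Omega)}$ times an $L^2$ adjoint norm.

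For your discrete step, use the duality option rather than discrete $W^{1,r}$ stability, which on convex polytopes is a deep result you would have to import. Let $e_h=q_h-\phi_h$, and let $z_h\in\mathbb{V}_h$ be the discrete solution with coefficient $\partial_y a(\cdot,y_h)+u_1\geq 0$ and right-hand side $\operatorname{sign}(e_h)$. Then
$\|e_h\|_{L^1(\Omega)}=\sum_{t\in\mathcal{D}}(y(t)-y_h(t))z_h(t)-([\partial_y a(\cdot,y)-\partial_y a(\cdot,y_h)]q_h,z_h)_{L^2(\Omega)}$.
The bound $\|z_h\|_{L^\infty(\Omega)}\lesssim 1$ follows from the continuous $L^\infty$ bound and the linear analogue of \eqref{eq:state_priori_estimate_L^infty}. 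The same device gives the discrete Lipschitz bounds needed for \eqref{eq:aux_j-jh_estimate_2}.

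The paper also keeps its bounds for $\mathfrak{E}_h$ and $\mathfrak{D}_h$ in terms of $\|g\|_{L^2(\Omega)}$ and reuses them in Theorems \ref{thm:improved_error_estimate} and \ref{thm:ee_hsquare_semidisc}. Your $L^1$/$L^\infty$ bookkeeping suffices for the lemma as stated.

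Drop your first option, the global bound $\|y-y_h\|_{L^\infty(\Omega)}\lesssim h^2|\log h|^2$ on convex polytopes. It fails in general on quasi-uniform meshes: corner and edge singularities $r^{\pi/\omega}$ with $\omega$ close to $\pi$ cap the pointwise rate at about $h^{\pi/\omega}$. The near/away splitting you adopt is therefore necessary, not just a convenience.
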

\begin{proof}
We first prove \eqref{eq:aux_j-jh_estimate_1}. Let $g \in L^{\infty}(\Omega)$. Using \eqref{eq:Dj} and \eqref{eq:Djh}, we write $j'(u_{1})g = (\alpha u_{1} - y_{1}p_{1},g)_{L^{2}(\Omega)}$ and $j'_{h}(u_{1})g = (\alpha u_{1} - y_{1,h}\phi_{1,h}, g)_{L^{2}(\Omega)}$. Here, $y_{1} = \mathcal{S}(u_{1})$, $p_{1} = \Phi(u_{1})$,  $y_{1,h} = \mathcal{S}_{h}(u_{1})$, and $\phi_{1,h} \in \mathbb{V}_{h}$ is the unique solution of \eqref{eq:phi_h} with $\mathtt{y}_{h}$ and $\mathtt{u}$ replaced by $y_{1,h}$ and $u_{1}$, respectively. Let $\phi_{1} \in W^{1,r}_{0}(\Omega)$ be the solution of
\begin{equation*}
\int_{\Omega} \nabla w \cdot \nabla \phi_{1} \mathrm{d}x
+
\int_{\Omega} \left[ \frac{\partial a}{\partial y}(\cdot,y_{1,h}) + u_{1}\right]\phi_{1}w \mathrm{d}x = \sum_{t \in \mathcal{D}}\langle (y_{1,h}(t)-y_{t})\delta_{t},w \rangle
\end{equation*}
for all $w \in W^{1,s}_{0}(\Omega)$. This problem is well-posed because the reaction coefficient is nonnegative and uniformly bounded in $L^{\infty}(\Omega)$ \cite[Theorem 4.2]{MR5008466}; the corresponding stability bound holds with a constant independent of $h$ and $u_{1} \in \mathbb{U}_{ad}$. We then write
\begin{equation}
\begin{aligned}
\label{eq:decomposition}
j'(u_{1})g - j'_{h}(u_{1})g & =  (y_{1,h}[\phi_{1,h}-\phi_{1}],g)_{L^{2}(\Omega)} + (y_{1,h}[\phi_{1} - p_{1}],g)_{L^{2}(\Omega)} \\
& + (p_{1}[y_{1,h}-y_{1}],g)_{L^{2}(\Omega)}  =: \mathfrak{M}_{h} + \mathfrak{D}_{h} + \mathfrak{E}_{h}.
\end{aligned}
\end{equation}
We begin by estimating $\mathfrak{E}_{h}$. Let $\Omega_1$, $\Lambda_{1}$, and $\Omega_{0}$ be smooth domains such that $\mathcal{D} \subset \Omega_{1} \Subset \Lambda_{1} \Subset \Omega_{0} \Subset \Omega$. Invoking the estimates \eqref{eq:state_priori_estimate_L^2} and \eqref{eq:state_local_L^infty_est}, the embedding $W^{1,r}_{0}(\Omega) \hookrightarrow L^{2}(\Omega)$, and the definitions of $y_{1}$ and $y_{1,h}$, we obtain
\begin{align*}
\|p_{1}[y_{1,h}-y_{1}]\|^{2}_{L^{2}(\Omega)} & \leq \|p_{1}\|^{2}_{L^{2}(\Lambda_{1})} \|y_{1,h}-y_{1}\|^{2}_{L^{\infty}(\Lambda_{1})} + \|p_{1}\|^{2}_{L^{\infty}(\Omega \setminus \Lambda_{1})} \|y_{1,h}-y_{1}\|^{2}_{L^{2}(\Omega)}  \\
& \lesssim h^{4}|\log h|^{4} \|\nabla p_{1}\|^{2}_{L^{r}(\Omega)} + h^{4} \|p_{1}\|^{2}_{L^{\infty}(\Omega \setminus \Lambda_{1})} \|f-a(\cdot,0)\|^{2}_{L^{2}(\Omega)}.  
\end{align*} 
Since $p_{1}$ is uniformly bounded with respect to $u_1\in\mathbb U_{ad}$ in $W^{1,r}_{0}(\Omega)$ (cf.~Theorem \ref{thm:wp_adjoint}) and, by the proof of Theorem \ref{thm:H^2-reg-p}, in $H^2(\Omega\setminus\Lambda_{1}) \hookrightarrow L^{\infty}(\Omega\setminus\Lambda_{1})$, we conclude that $\|p_{1}[y_{1,h}-y_{1}]\|_{L^{2}(\Omega)} \lesssim h^{2}|\log h|^{2}$. Consequently,
\begin{equation}\label{eq:est_1h}
|\mathfrak{E}_{h}| \leq \|p_{1}[y_{1,h}-y_{1}]\|_{L^{2}(\Omega)} \|g\|_{L^{2}(\Omega)} \lesssim h^{2}|\log h|^{2}\|g\|_{L^{2}(\Omega)}.
\end{equation}
We next estimate $\mathfrak{D}_{h}$. To this end, we define $\xi:=p_{1}-\phi_{1} \in W^{1,r}_{0}(\Omega)$. Subtracting the equations satisfied by $p_{1}$ and $\phi_{1}$, we obtain
\begin{multline*}
\int_{\Omega} \nabla w \cdot \nabla \xi \mathrm{d}x + \int_{\Omega} \left[ \frac{\partial a}{\partial y}(\cdot,y_{1}) + u_{1}\right]\xi w \mathrm{d}x  = \sum_{t \in \mathcal{D}}\langle(y_{1}(t)-y_{1,h}(t))\delta_{t}, w \rangle \\
- \int_{\Omega}\left[ \frac{\partial a}{\partial y}(\cdot,y_{1}) - \frac{\partial a}{\partial y}(\cdot,y_{1,h})\right]\phi_{1}w\mathrm{d}x \quad \forall w \in W^{1,s}_{0}(\Omega). \nonumber
\end{multline*}
Using the stability estimate for the adjoint problem, the definition of the $W^{-1,r}(\Omega)$-norm, the bounds \eqref{eq:state_local_L^infty_est} and \eqref{eq:state_priori_estimate_L^2}, and the uniform boundedness of $\phi_1$ in $W^{1,r}_{0}(\Omega)$, we obtain $\|\nabla \xi\|_{L^{r}(\Omega)} \lesssim h^{2}|\log h|^{2}$. This estimate, the Sobolev embedding $W^{1,r}_{0}(\Omega) \hookrightarrow L^{2}(\Omega)$, and the uniform bound $\| y_{1,h}\|_{L^{\infty}(\Omega)} \lesssim 1$, obtained as in \eqref{eq:yh_uniformly_bounded}, yield
\begin{equation}\label{eq:est_2h}
|\mathfrak{D}_{h}| \leq \| y_{1,h} \|_{L^{\infty}(\Omega)} \|\xi\|_{L^{2}(\Omega)} \| g\|_{L^{2}(\Omega)} \lesssim h^{2}|\log h|^{2} \|g\|_{L^{2}(\Omega)}.
\end{equation}
Finally, since $u_{1} \in \mathbb{U}_{ad}$ and $\phi_{1,h} \in \mathbb{V}_{h}$ is the finite element approximation of $\phi_{1}$, the error estimate \eqref{eq:adj_priori_estimate_L^1} and the observation following its proof yield
\begin{equation}\label{eq:est_3h}
|\mathfrak{M}_{h}| \leq \|y_{1,h}\|_{L^{\infty}(\Omega)} \|\phi_{1,h}-\phi_{1}\|_{L^{1}(\Omega)} \|g\|_{L^{\infty}(\Omega)} \lesssim h^{2}|\log h|^{2} \|g\|_{L^{\infty}(\Omega)}.
\end{equation}
Combining \eqref{eq:est_1h}, \eqref{eq:est_2h}, and \eqref{eq:est_3h}, we conclude the desired estimate \eqref{eq:aux_j-jh_estimate_1}.

Let $g \in L^2(\Omega)$, let $y_{2,h} = \mathcal{S}_{h}(u_{2})$, and let $\phi_{2,h}$ be defined as $\phi_{1,h}$ with $y_{1,h}$ and $u_{1}$ replaced by $y_{2,h}$ and $u_{2}$, respectively. The bound \eqref{eq:aux_j-jh_estimate_2} follows from \eqref{eq:Djh}, the splitting $y_{2,h}\phi_{2,h} - y_{1,h}\phi_{1,h} = y_{2,h}[\phi_{2,h} - \phi_{1,h}] + \phi_{1,h}[y_{2,h} - y_{1,h}]$, and the discrete bounds $\|y_{2,h} - y_{1,h} \|_{L^{\infty}(\Omega)} + \|\phi_{2,h}-\phi_{1,h}\|_{L^{2}(\Omega)} \lesssim \|u_2-u_1\|_{L^{2}(\Omega)}$, which can be derived as in \cite[Lemma 6.3, Step 2]{MR4438718}. For brevity, we omit the details.
\end{proof}

\begin{lemma}[error bound for an auxiliary variable]\label{lem:auxiliary_variable}
Assume that \ref{A1}--\ref{A3} hold, that $f,a(\cdot,0) \in L^{\eta}(\Omega)$ for some $\eta > d$, and that $\bar{u}$ is a locally optimal control. Assume, in addition, that $\mathtt{a}>0$ if $d = 2$, or that $f - a(\cdot,0) \geq 0$ a.e.~in $\Omega$ and $f \not\equiv a(\cdot,0)$ if $d = 3$. Then, there exists $h_\star > 0$ such that, for every $h < h_\star$, there is
\begin{equation}\label{eq:aux_var_properties}
u_{h}^{*} \in \mathbb{U}_{ad,h}:
\qquad
j'(\bar u)(\bar u - u_h^*) = 0,
\quad
\|\bar u - u_h^*\|_{L^2(\Omega)} \leq Ch,
\end{equation}
where $C>0$ is independent of $h$.
\end{lemma}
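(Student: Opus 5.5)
We will use the classical construction of Casas--Mateos--Tröltzsch type (cf.~\cite{MR2350349}), defined elementwise. Write $\bar{\mathfrak{p}} = \alpha\bar u - \bar y\bar p = j'(\bar u)$. The hypotheses are exactly those of Theorem \ref{thm:C01_reg_baru}, so $\bar u\in C^{0,1}(\bar\Omega)$. Moreover $\bar{\mathfrak p}\in L^2(\Omega)$, because $\bar y\in L^\infty(\Omega)$ and $\bar p\in W^{1,r}_0(\Omega)\hookrightarrow L^2(\Omega)$.

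For each $T\in\mathscr T_h$ we set:
\begin{itemize}[leftmargin=*,nosep]
\item[(a)] $u_h^*|_T := \dfrac{\int_T \bar{\mathfrak p}\,\bar u\,\mathrm dx}{\int_T\bar{\mathfrak p}\,\mathrm dx}$ if $\int_T \bar{\mathfrak p}\,\mathrm dx\neq 0$;
\item[(b)] $u_h^*|_T := \frac{1}{|T|}\int_T\bar u\,\mathrm dx$ otherwise.
\end{itemize}

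\emph{Admissibility and orthogonality.} The sign conditions \eqref{eq:sign_con_j_prime} show that $\bar{\mathfrak p}$ has constant sign on the support where it is nonzero and $\bar u\in\{\mathtt a,\mathtt b\}$. This needs a small argument: with the weight $\bar{\mathfrak p}$ of fixed sign, (a) is a weighted mean of $\bar u$ and lies in $[\mathtt a,\mathtt b]$. More carefully, on each $T$ in case (a), $\bar{\mathfrak p}\bar u$ and $\bar{\mathfrak p}$ relate through $\bar{\mathfrak p}\neq0 \Rightarrow \bar u\in\{\mathtt a,\mathtt b\}$. If $\bar{\mathfrak p}$ changes sign on $T$, then $T$ contains points with $\bar u=\mathtt a$ and points with $\bar u=\mathtt b$. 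Since $\bar u$ is Lipschitz, this forces $h_T\geq(\mathtt b-\mathtt a)/\|\nabla\bar u\|_{L^\infty}$, which is impossible for $h<h_\star$ small. Hence $\bar{\mathfrak p}$ has one sign on each $T$ for small $h$, the quotient is a genuine weighted average, and $u_h^*\in\mathbb U_{ad,h}$. Orthogonality is built in: $\int_T \bar{\mathfrak p}(\bar u-u_h^*)\,\mathrm dx=0$ on every $T$. In case (a) this holds by definition; in case (b), single sign together with zero integral gives $\bar{\mathfrak p}=0$ a.e. on $T$. Summing over $T$ yields $j'(\bar u)(\bar u-u_h^*)=0$.

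\emph{Error bound.} In both cases $u_h^*|_T$ is a weighted average of $\bar u$ over $T$ with a nonnegative weight. Therefore $|\bar u(x)-u_h^*|_T|\leq \operatorname{osc}_T\bar u\leq h_T\|\nabla\bar u\|_{L^\infty(\Omega)}$ for $x\in T$. Squaring and summing gives $\|\bar u-u_h^*\|_{L^2(\Omega)}\leq |\Omega|^{1/2}\|\nabla\bar u\|_{L^\infty(\Omega)}h$, so $C$ is independent of $h$.

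\emph{Main obstacle.} The delicate step is the sign argument showing that $\bar{\mathfrak p}$ does not change sign within a single element. This relies on the global Lipschitz continuity of $\bar u$ from Theorem \ref{thm:C01_reg_baru}, which is where the dimension-dependent hypotheses enter. One also has to handle the singularity of $\bar p$ near $\mathcal E$: since it enters only through $\bar{\mathfrak p}$ as a weight in $L^1(T)$, the integrability $\bar{\mathfrak p}\in L^2(\Omega)$ suffices there.
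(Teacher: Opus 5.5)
Your proposal is correct and follows essentially the same route as the paper. Lipschitz continuity of $\bar u$ (Theorem \ref{thm:C01_reg_baru}) keeps $\bar u$ from attaining both bounds on one element for small $h$, so $\bar{\mathfrak p}$ has constant sign on each $T$, and your elementwise weighted-mean construction is exactly the one the paper takes from \cite[(57)]{MR3586845}; the only cosmetic gap is that when $\bar u$ is constant you should not divide by $\|\nabla\bar u\|_{L^\infty(\Omega)}=0$ (the paper simply sets $u_h^*=\bar u$ in that case).
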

\begin{proof}
Since the assumptions of Theorem \ref{thm:C01_reg_baru} are fulfilled, we have $\bar{u} \in C^{0,1}(\bar{\Omega})$. If $\bar{u}$ is constant, the choice $u_{h}^{*} = \bar{u}$ satisfies \eqref{eq:aux_var_properties} for every $h$, so we may assume that the Lipschitz constant $L_{\bar{u}}$ of $\bar{u}$ on $\bar{\Omega}$ is positive. Set $h_{\star} := (\mathtt{b}-\mathtt{a})/(2L_{\bar{u}})$ and let $h < h_{\star}$. Basic computations show that $|\bar{u}(x_{1})-\bar{u}(x_{2})| < (\mathtt{b}-\mathtt{a})/2$ for every $T \in \mathscr{T}_{h}$ and for every $x_1, x_2 \in T$.  Consequently, $\bar{u}$ cannot attain both control bounds on the same element, which, in view of \eqref{eq:sign_con_j_prime}, shows that $\bar{\mathfrak{p}} = \alpha\bar{u} - \bar{y}\bar{p}$ has constant sign a.e.~on each $T \in \mathscr{T}_{h}$. We now define $u_{h}^{*}$ as in \cite[(57)]{MR3586845}, with $\bar{d}$ replaced by $\bar{\mathfrak{p}}$; note that $\bar{\mathfrak{p}} \in L^{2}(\Omega)$. The constant sign of $\bar{\mathfrak{p}}$ allows us to apply a weighted mean value theorem, and the arguments in the proofs of \cite[Lemma 43]{MR3586845} and \cite[Lemma 6.4]{MR4438718} yield \eqref{eq:aux_var_properties}.
\end{proof}

\begin{theorem}[error estimate]\label{thm:error_estimate_control}
Assume that \ref{A1}--\ref{A3} hold and that $f,a(\cdot,0) \in L^{\infty}(\Omega)$. Assume, in addition, that $\mathtt{a}>0$ if $d = 2$, or that $f - a(\cdot,0) \geq 0$ a.e.~in $\Omega$ and $f \not\equiv a(\cdot,0)$ if $d = 3$. If $\bar{u} \in \mathbb{U}_{ad}$ is a locally optimal control satisfying the second order optimality condition \eqref{eq:equiv_cond}, then there exists $h_{\ddagger}>0$ such that
\begin{equation}\label{eq:error_estimate_control_2}
\|\bar{u}-\bar{u}_{h}\|_{L^{2}(\Omega)} \lesssim h | \log h| \quad \forall h<h_{\ddagger}, 
\end{equation}
where the hidden constant is independent of $h$.
\end{theorem}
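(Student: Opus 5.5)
We argue by contradiction, as announced before Lemma \ref{lem:aux_result}. If \eqref{eq:error_estimate_control_2} fails, there is a sequence $h_k\to0$ satisfying \eqref{eq:contradict_assumption}. Lemma \ref{lem:aux_result} then gives, for $k\geq k_\dagger$ (dropping the index $k$),
\[
\mathfrak{C}\|\bar u-\bar u_h\|_{L^2(\Omega)}^2\leq j'(\bar u_h)(\bar u_h-\bar u)-j'(\bar u)(\bar u_h-\bar u).
\]
The plan is to bound the right-hand side by $C(h\|\bar u-\bar u_h\|_{L^2(\Omega)}+h^2|\log h|^2)$. This yields $\|\bar u-\bar u_h\|_{L^2(\Omega)}\lesssim h|\log h|$, which contradicts the second limit in \eqref{eq:contradict_assumption}.

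\emph{Handling the two terms.} For the second term, the first order condition \eqref{eq:char_var_ineq} with $u=\bar u_h\in\mathbb U_{ad}$ gives $-j'(\bar u)(\bar u_h-\bar u)\leq0$, so it may be dropped. For the first term, I use the auxiliary control $u_h^*$ of Lemma \ref{lem:auxiliary_variable}; its hypotheses hold since $L^\infty(\Omega)\subset L^\eta(\Omega)$. The discrete variational inequality \eqref{eq:fully_var_ineq} with $u_h=u_h^*$ gives $j_h'(\bar u_h)(u_h^*-\bar u_h)\geq0$, i.e.\ $j_h'(\bar u_h)(\bar u_h-\bar u)\leq j_h'(\bar u_h)(u_h^*-\bar u)$. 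Hence
\[
j'(\bar u_h)(\bar u_h-\bar u)\leq [j'(\bar u_h)-j_h'(\bar u_h)](\bar u_h-\bar u)+[j_h'(\bar u_h)-j_h'(\bar u)](u_h^*-\bar u)+[j_h'(\bar u)-j'(\bar u)](u_h^*-\bar u)+j'(\bar u)(u_h^*-\bar u).
\]
The last term vanishes by \eqref{eq:aux_var_properties}.

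\emph{Bounding the remaining pieces.} The first and third brackets are estimated with \eqref{eq:aux_j-jh_estimate_1}, using $g=\bar u_h-\bar u$ and $g=u_h^*-\bar u$. Both functions are bounded in $L^\infty(\Omega)$ by $\mathtt b-\mathtt a$, so each bracket is $\lesssim h^2|\log h|^2$. The second bracket is estimated with \eqref{eq:aux_j-jh_estimate_2}, giving $\lesssim\|\bar u_h-\bar u\|_{L^2(\Omega)}\|u_h^*-\bar u\|_{L^2(\Omega)}\lesssim h\|\bar u_h-\bar u\|_{L^2(\Omega)}$. Collecting these,
\[
\mathfrak C\|\bar u-\bar u_h\|^2_{L^2(\Omega)}\lesssim h\|\bar u-\bar u_h\|_{L^2(\Omega)}+h^2|\log h|^2 .
\]
Young's inequality then yields $\|\bar u-\bar u_h\|_{L^2(\Omega)}\lesssim h|\log h|$ for all large $k$, the desired contradiction.

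\emph{Expected obstacle.} The step I expect to need the most care is the use of the $L^\infty$-weighted estimate \eqref{eq:aux_j-jh_estimate_1}. The adjoint state is only in $W^{1,r}_0(\Omega)$, so an $L^2$-based bound would only give $h^{2-d/2}$. The $L^\infty$ bound on the directions comes for free from the box constraints, and it is exactly what preserves the $h^2|\log h|^2$ rate. A second point to check is that all constants are uniform in $\bar u_h\in\mathbb U_{ad}$ and that $h$ stays below $\min\{h_\odot,h_\star\}$. Both are guaranteed by Lemmas \ref{lem:estimates_for_j'_j'h} and \ref{lem:auxiliary_variable}.
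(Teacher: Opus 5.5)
Your proposal is correct and follows the paper's proof essentially step by step. It uses the same contradiction sequence, Lemma \ref{lem:aux_result}, the continuous and discrete variational inequalities tested with $\bar u_h$ and $u_h^*$, the same splitting controlled by \eqref{eq:aux_j-jh_estimate_1}, \eqref{eq:aux_j-jh_estimate_2}, and \eqref{eq:aux_var_properties}, and Young's inequality; the only cosmetic difference is that you drop $-j'(\bar u)(\bar u_h-\bar u)\le 0$ first, whereas the paper adds the two variational inequalities.
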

\begin{proof}
As stated at the beginning of Section \ref{sec:error_estimates_fully}, we argue by contradiction. Let $\{h_{k}\}_{k \in \mathbb{N}} \subset \mathbb{R}^{+}$ be the sequence satisfying \eqref{eq:contradict_assumption}. By Lemma \ref{lem:aux_result}, \eqref{eq:aux_control_error_bound} holds for every $k \geq k_{\dagger}$. Since $h_k\to0$, there exists $k_\star\in\mathbb N$ such that $h_k< \min \{ h_\star, h_\odot \}$ for every $k\geq k_\star$, where $h_\star$ and $h_\odot $ are as in Lemmas \ref{lem:auxiliary_variable} and \ref{lem:estimates_for_j'_j'h}, respectively. Hence, in what follows, we consider $k\geq\max\{k_\dagger,k_\star\}$. Adding and subtracting $j'_{h_k}(\bar{u}_{h_k})(\bar{u}_{h_k}-\bar{u})$ on the right-hand side of \eqref{eq:aux_control_error_bound}, we obtain
\begin{equation}
\label{eq:control_estimate}
\begin{aligned}
\|\bar{u}-\bar{u}_{h_{k}}\|^{2}_{L^{2}(\Omega)}
&
\lesssim [j'(\bar{u}_{h_k})-j'_{h_{k}}(\bar{u}_{h_{k}})](\bar{u}_{h_k} - \bar{u})
+
[j'_{h_{k}}(\bar{u}_{h_{k}})-j'(\bar{u})](\bar{u}_{h_{k}} - \bar{u})
\\
& \lesssim h_{k}^{2} |\log h_{k}|^{2} +
[j'_{h_{k}}(\bar{u}_{h_{k}})-j'(\bar{u})](\bar{u}_{h_{k}} - \bar{u}).
\end{aligned}
\end{equation}
In the last step, we used \eqref{eq:aux_j-jh_estimate_1} with $u_1 = \bar{u}_{h_k}$ and $g=\bar{u}_{h_k}-\bar{u}$, together with $\|\bar{u}_{h_k}-\bar{u}\|_{L^\infty(\Omega)} \leq \mathtt{b}-\mathtt{a}$. It remains to control $[j'_{h_{k}}(\bar{u}_{h_{k}})-j'(\bar{u})](\bar{u}_{h_{k}} - \bar{u})$. To this end, we set $u=\bar{u}_{h_{k}}$ in \eqref{eq:char_var_ineq} and $u_{h} = u^{*}_{h_{k}}$ in \eqref{eq:fully_var_ineq}, where $u^*_{h_k}$ is provided by Lemma \ref{lem:auxiliary_variable}. Adding these inequalities yields
$
0 \leq j'(\bar{u})(\bar{u}_{h_{k}}-\bar{u}) + j'_{h_k}(\bar{u}_{h_{k}})(u^{*}_{h_{k}}-\bar{u}_{h_{k}})
= j'(\bar{u})(\bar{u}_{h_{k}}-\bar{u})
 + j'_{h_{k}}(\bar{u}_{h_{k}})(u^{*}_{h_{k}}-\bar{u}) + j'_{h_{k}}(\bar{u}_{h_{k}})(\bar{u}-\bar{u}_{h_{k}}).
$
From the latter, we deduce that
$
[j'(\bar{u})-j'_{h_{k}}(\bar{u}_{h_{k}})](\bar{u}-\bar{u}_{h_{k}}) \leq j'_{h_{k}}(\bar{u}_{h_{k}})(u^{*}_{h_{k}}-\bar{u}).
$
Using this bound, $j'(\bar{u})(u^{*}_{h_{k}}-\bar{u}) = 0$, which follows from \eqref{eq:aux_var_properties}, and adding and subtracting $j'_{h_{k}}(\bar{u})(u^{*}_{h_{k}}-\bar{u})$, we obtain
\begin{multline}
[j'(\bar{u})-j'_{h_{k}}(\bar{u}_{h_{k}})](\bar{u}-\bar{u}_{h_{k}})
\leq
j'_{h_{k}}(\bar{u}_{h_{k}})(u^{*}_{h_{k}}- \bar{u})
- j'(\bar{u})(u^{*}_{h_{k}}-\bar{u})
\label{eq:j'_j'h_calculus} \\
= [j'_{h_{k}}(\bar{u}_{h_{k}})-j'_{h_{k}}(\bar{u})](u^{*}_{h_{k}}-\bar{u}) + [j'_{h_{k}}(\bar{u})-j'(\bar{u})](u^{*}_{h_{k}}-\bar{u}). 
\end{multline}
Using \eqref{eq:aux_j-jh_estimate_2} with $u_1=\bar{u}_{h_k}$, $u_2=\bar{u}$, and
$g=u^*_{h_k}-\bar{u}$, and \eqref{eq:aux_j-jh_estimate_1} with $u_1=\bar{u}$ and $g=u^*_{h_k}-\bar{u}$, together with $\|u^*_{h_k}-\bar{u}\|_{L^\infty(\Omega)} \leq\mathtt{b}-\mathtt{a}$, we obtain
\begin{equation*}
  [j'(\bar{u})-j'_{h_{k}}(\bar{u}_{h_{k}})](\bar{u}-\bar{u}_{h_{k}})  \leq C \left(\|\bar{u}_{h_{k}}-\bar{u}\|_{L^{2}(\Omega)}\|u^{*}_{h_{k}}-\bar{u}\|_{L^{2}(\Omega)} + h_{k}^{2}|\log h_{k}|^{2} \right),
\end{equation*}
where $C$ is independent of $k$. Substituting this bound into \eqref{eq:control_estimate}, using Young's inequality and the estimate in \eqref{eq:aux_var_properties}, we deduce, for every $k\geq\max\{k_\dagger,k_\star\}$, that
\[
\|\bar{u}-\bar{u}_{h_{k}}\|^{2}_{L^{2}(\Omega)} \leq \tfrac{1}{2}\|\bar{u}-\bar{u}_{h_{k}}\|^{2}_{L^{2}(\Omega)} + 
\tilde{C} h_{k}^{2}(1+|\log h_{k}|^{2}),
\]
where $\tilde C>0$ is independent of $k$. Consequently, for all sufficiently large $k$, $\|\bar{u}-\bar{u}_{h_k}\|_{L^2(\Omega)} \lesssim h_k(1+|\log h_k|) \lesssim h_k|\log h_k|$, which contradicts \eqref{eq:contradict_assumption}.
\end{proof}

We improve the error estimate of Theorem \ref{thm:error_estimate_control} in two dimensions.

\begin{theorem}[improved error estimate]
\label{thm:improved_error_estimate}
Let $d=2$ and assume that the hypotheses of Theorem \ref{thm:error_estimate_control} hold. Then there exists $\tilde{h}_{\ddagger}>0$ such that
\begin{equation}\label{eq:error_estimate_h}
\|\bar{u}-\bar{u}_{h}\|_{L^{2}(\Omega)} \lesssim h \quad \forall h <\tilde{h}_{\ddagger},
\end{equation}
where the hidden constant is independent of $h$.
\end{theorem}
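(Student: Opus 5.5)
The plan is to repeat the contradiction argument behind Theorem \ref{thm:error_estimate_control} with $h$ in place of $h|\log h|$. The only source of the logarithm is the term $h^{2}|\log h|^{2}$ coming from \eqref{eq:aux_j-jh_estimate_1}. In two dimensions I would replace that bound by one giving a consistency error of order $h^{2}$ (or $o(h^{2})$ after division). Concretely, suppose \eqref{eq:error_estimate_h} fails. Then there is a sequence $h_k\to0$ with $\|\bar u-\bar u_{h_k}\|_{L^2(\Omega)}\to0$ and $\|\bar u-\bar u_{h_k}\|_{L^2(\Omega)}/h_k\to\infty$. Lemma \ref{lem:aux_result} must first be rechecked under this weaker hypothesis. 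Its Step 1 only used $\|\Pi_h\bar u-\bar u\|_{L^2(\Omega)}\lesssim h$ divided by $\|\bar u_h-\bar u\|_{L^2(\Omega)}$, so it still applies, and we get \eqref{eq:aux_control_error_bound}. Lemma \ref{lem:auxiliary_variable} again supplies $u^*_{h}$ with $\|\bar u-u_h^*\|_{L^2(\Omega)}\lesssim h$.

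Rerunning the chain \eqref{eq:control_estimate}--\eqref{eq:j'_j'h_calculus}, the two terms $[j'(\bar u_{h_k})-j'_{h_k}(\bar u_{h_k})](\bar u_{h_k}-\bar u)$ and $[j'_{h_k}(\bar u)-j'(\bar u)](u^*_{h_k}-\bar u)$ are the ones to sharpen. I would not bound them by an $L^\infty$ norm of $g$ times $h^2|\log h|^2$. Instead I would use the $L^2$ norm of $g$, which is small. For $d=2$ I would establish
\[
|j'(u_1)g-j'_h(u_1)g|\lesssim h\,\|g\|_{L^2(\Omega)},\qquad u_1\in\mathbb U_{ad},\ g\in L^2(\Omega).
\]
From the decomposition \eqref{eq:decomposition}, this requires bounds for three terms:
\begin{itemize}
\item $\mathfrak E_h$ and $\mathfrak D_h$ are already $\lesssim h^2|\log h|^2\|g\|_{L^2(\Omega)}$.
\item $\mathfrak M_h$ is bounded by $\|y_{1,h}\|_{L^\infty(\Omega)}\|\phi_{1,h}-\phi_1\|_{L^2(\Omega)}\|g\|_{L^2(\Omega)}$.
\item By \eqref{eq:adj_priori_estimate_L^2} with $d=2$, $\|\phi_{1,h}-\phi_1\|_{L^2(\Omega)}\lesssim h^{2-d/2}=h$.
\end{itemize}
Then \eqref{eq:control_estimate} becomes
\[
\|\bar u-\bar u_{h_k}\|^2_{L^2(\Omega)}\lesssim h_k\|\bar u-\bar u_{h_k}\|_{L^2(\Omega)}+\|\bar u_{h_k}-\bar u\|_{L^2(\Omega)}\|u^*_{h_k}-\bar u\|_{L^2(\Omega)}+h_k\|u^*_{h_k}-\bar u\|_{L^2(\Omega)}.
\]
Every term on the right is $\lesssim h_k\|\bar u-\bar u_{h_k}\|_{L^2(\Omega)}+h_k^2$, and Young's inequality gives $\|\bar u-\bar u_{h_k}\|_{L^2(\Omega)}\lesssim h_k$. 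This contradicts the assumed behaviour of the sequence.

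The main obstacle is confirming that the $L^2$-based consistency bound is legitimate with uniform constants. In particular, $\|y_{1,h}\|_{L^\infty(\Omega)}$ must be uniformly bounded, and \eqref{eq:adj_priori_estimate_L^2} must hold uniformly for the adjoint with the perturbed coefficient $\partial a/\partial y(\cdot,y_{1,h})+u_1$. Both follow from \eqref{eq:yh_uniformly_bounded} and the remark after Theorem \ref{thm:error_estimate_p}, since this coefficient is nonnegative and bounded. A second point to verify is that no $L^\infty$ bound on $g$ is needed anywhere else. This is why the three-dimensional case keeps the logarithm: there $h^{2-d/2}=h^{1/2}$ is too weak, and one must pass through the $L^1$ estimate \eqref{eq:adj_priori_estimate_L^1}.
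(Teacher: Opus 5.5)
Your proposal is correct and follows essentially the same route as the paper. You rerun the contradiction argument with $h$ in place of $h|\log h|$, reuse Lemma~\ref{lem:aux_result} (whose Step~1 only needs $h_k/\|\bar{u}_{h_k}-\bar{u}\|_{L^2(\Omega)}\to 0$) and Lemma~\ref{lem:auxiliary_variable}, and replace \eqref{eq:aux_j-jh_estimate_1} by the bound $|j'(u_1)g-j'_h(u_1)g|\lesssim h\|g\|_{L^2(\Omega)}$, obtained from the same decomposition with $\mathfrak{M}_h$ controlled via \eqref{eq:adj_priori_estimate_L^2} for $d=2$; the concluding Young's inequality step is exactly the paper's.
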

\begin{proof}
We proceed by contradiction. Recall that $\{\bar{u}_{h}\}_{h>0}$ converges to $\bar{u}$ in $L^{2}(\Omega)$ as $h \rightarrow 0$. Assume that \eqref{eq:error_estimate_h} does not hold. Then, for every $k \in \mathbb{N}$, there exists $h_k \in (0,k^{-1})$ such that $\|\bar{u}-\bar{u}_{h_{k}}\|_{L^{2}(\Omega)} >kh_{k}$. Consequently, $h_k \to 0$ as $k \to \infty$ and
\begin{equation}\label{eq:contradict_assumption_O(h)}
\lim_{k \to \infty}\|\bar{u}-\bar{u}_{h_{k}}\|_{L^{2}(\Omega)} = 0,
\qquad
\lim_{k \to \infty} h^{-1}_{k} \|\bar{u}-\bar{u}_{h_{k}}\|_{L^{2}(\Omega)} = + \infty.
\end{equation}
The estimate \eqref{eq:aux_control_error_bound} then follows from the arguments used in the proof of Lemma \ref{lem:aux_result}, with \eqref{eq:contradict_assumption_O(h)} in place of \eqref{eq:contradict_assumption}. We next establish a variant of \eqref{eq:aux_j-jh_estimate_1} that will be crucial for the improved error estimate. More precisely, if $u_1\in\mathbb{U}_{ad}$ and $g\in L^2(\Omega)$, then
\begin{equation}\label{eq:j'_j'h_improved_estimate}
|j'(u_{1})g-j_{h}'(u_{1})g| \lesssim h \|g\|_{L^{2}(\Omega)},
\end{equation}
provided $h$ is sufficiently small; the hidden constant
is independent of $u_1\in\mathbb U_{ad}$. The bound \eqref{eq:aux_j-jh_estimate_1} was established in Lemma \ref{lem:estimates_for_j'_j'h} upon decomposing $j'(u_{1})g - j_{h}'(u_{1})g = \mathfrak{M}_{h} + \mathfrak{D}_{h} + \mathfrak{E}_{h}$; we follow the same decomposition here. The estimates \eqref{eq:est_1h} and \eqref{eq:est_2h}, whose proofs only use $g \in L^{2}(\Omega)$, yield
\begin{equation*}
 |\mathfrak{E}_{h}|
\lesssim
 h^{2}|\log h|^{2}\|g\|_{L^{2}(\Omega)},
 \qquad
 |\mathfrak{D}_{h}|
\lesssim h^{2}|\log h|^{2} \|g\|_{L^{2}(\Omega)}.
\end{equation*}
It remains to bound $\mathfrak{M}_{h}$, the only term for which
\eqref{eq:aux_j-jh_estimate_1} required the norm  of $g$ in $L^{\infty}(\Omega)$. Using the uniform bound $\|y_{1,h}\|_{L^\infty(\Omega)}\lesssim1$ and \eqref{eq:adj_priori_estimate_L^2} for $d=2$, we obtain $|\mathfrak{M}_{h}| \leq \|y_{1,h}\|_{L^{\infty}(\Omega)} \|\phi_{1,h}-\phi_{1}\|_{L^{2}(\Omega)} \|g\|_{L^{2}(\Omega)} \lesssim h \|g\|_{L^{2}(\Omega)}$. Collecting these estimates and using that $h^{2}|\log h|^{2} \lesssim h$ for $h$
sufficiently small, we obtain \eqref{eq:j'_j'h_improved_estimate}.

To establish \eqref{eq:error_estimate_h}, we proceed as in the proof of Theorem \ref{thm:error_estimate_control}. In particular, using \eqref{eq:j'_j'h_improved_estimate} in the first inequality of \eqref{eq:control_estimate}, we obtain
\[
\|\bar{u}-\bar{u}_{h_{k}}\|^{2}_{L^{2}(\Omega)} \lesssim h_{k}\|\bar{u}-\bar{u}_{h_{k}}\|_{L^{2}(\Omega)} + [j'_{h_{k}}(\bar{u}_{h_{k}}) - j'(\bar{u})](\bar{u}_{h_{k}} - \bar{u}). 
\]
Estimates \eqref{eq:j'_j'h_calculus}, \eqref{eq:aux_j-jh_estimate_2}, \eqref{eq:j'_j'h_improved_estimate}, and \eqref{eq:aux_var_properties} yield
$
\|\bar{u}-\bar{u}_{h_{k}}\|^{2}_{L^{2}(\Omega)} \lesssim h_{k} \|\bar{u}-\bar{u}_{h_{k}}\|_{L^{2}(\Omega)} + h_k^2,
$
so Young's inequality gives $\|\bar{u}-\bar{u}_{h_{k}}\|_{L^2(\Omega)} \lesssim h_{k}$, which contradicts \eqref{eq:contradict_assumption_O(h)}.
\end{proof}

\subsection{Error estimates for the semidiscrete scheme}\label{sec:error_est_SD}
Let $\{\bar{\mathfrak{u}}_{h}\}_{h>0} \subset \mathbb{U}_{ad}$ be a sequence of locally optimal controls of the semidiscrete scheme such that $\bar{\mathfrak{u}}_{h} \rightarrow \bar{u}$ in $L^{2}(\Omega)$ as $h \rightarrow 0$, where $\bar{u} \in \mathbb{U}_{ad}$ is a locally optimal control of \eqref{eq:weak_ocp}--\eqref{eq:weak_state_eq}; see Section \ref{sec:semidiscete_convergence}. In the following, we establish the error estimate
\begin{equation}
 \|\bar{u}-\bar{\mathfrak{u}}_{h}\|_{L^{2}(\Omega)} \lesssim h |\log h| \quad \forall h\in (0,h_{\boxtimes}), \quad h_{\boxtimes} >0.
\end{equation}
As in the analysis of the fully discrete scheme, we begin with an auxiliary bound.

\begin{lemma}[auxiliary bound]\label{lem:auxiliary_bound_semidiscrete}
Assume that \ref{A1}--\ref{A3} hold and that $\bar{u}$ satisfies the second order optimality condition \eqref{eq:equiv_cond}. Then, there exists $h_{\dagger}>0$ such that
\begin{equation}\label{eq:aux_control_error_bound_semidiscrete}
\mathfrak{C} \|\bar{u}-\bar{\mathfrak{u}}_{h}\|^{2}_{L^{2}(\Omega)} \leq [j'(\bar{\mathfrak{u}}_{h})-j'(\bar{u})](\bar{\mathfrak{u}}_{h}-\bar{u}) \qquad \forall h < h_{\dagger}, \quad \mathfrak{C}=2^{-1}\min\{\mu,\alpha\}.
\end{equation} 
\end{lemma}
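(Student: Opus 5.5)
We argue by contradiction, following the proof of Lemma \ref{lem:aux_result}. Note first a structural difference: in the semidiscrete case there is no discretization error in the admissible set. Consequently, no contradiction hypothesis such as \eqref{eq:contradict_assumption} will be needed to force the limit direction into the critical cone. The plan is as follows.

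Suppose \eqref{eq:aux_control_error_bound_semidiscrete} fails. Then there is a sequence $h_k\to0$ with $\bar{\mathfrak u}_{h_k}\neq\bar u$ violating the inequality. By the mean value theorem,
\[
[j'(\bar{\mathfrak u}_{h})-j'(\bar u)](\bar{\mathfrak u}_{h}-\bar u)=j''(\hat u_h)(\bar{\mathfrak u}_{h}-\bar u)^2, \qquad \hat u_h=\bar u+\theta_h(\bar{\mathfrak u}_h-\bar u).
\]
Set $g_h:=(\bar{\mathfrak u}_h-\bar u)/\|\bar{\mathfrak u}_h-\bar u\|_{L^2(\Omega)}$ and extract a subsequence with $g_h\rightharpoonup g$ in $L^2(\Omega)$. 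It then suffices to show that $\liminf j''(\hat u_h)g_h^2\geq\min\{\mu,\alpha\}$, which contradicts the assumed failure of the bound with constant $\mathfrak C=2^{-1}\min\{\mu,\alpha\}$.

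\emph{Step 1: $g\in C_{\bar u}$.} Since $\bar{\mathfrak u}_h\in\mathbb U_{ad}$, each $g_h$ satisfies \eqref{eq:sign_cond}, and this property is preserved under weak limits. Let $\bar{\mathfrak p}_h:=\alpha\bar{\mathfrak u}_h-\bar y_h\bar p_h$. Exactly as in Step 1 of Lemma \ref{lem:aux_result}, Theorems \ref{thm:aux_error_estimate_adj_state} and \ref{thm:aux_error_estimate_state}, which the paper notes apply verbatim to sequences in $\mathbb U_{ad}$, give $\bar{\mathfrak p}_h\to\bar{\mathfrak p}$ in $L^2(\Omega)$. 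The key simplification is that $\bar u\in\mathbb U_{ad}$ is itself an admissible test function in \eqref{eq:first_opt_cond_semidiscrete}. This yields $(\bar{\mathfrak p}_h,\bar{\mathfrak u}_h-\bar u)_{L^2(\Omega)}\leq0$, hence $(\bar{\mathfrak p}_h,g_h)_{L^2(\Omega)}\le 0$, and passing to the limit gives $\int_\Omega\bar{\mathfrak p}g\,\mathrm dx\leq0$. No projection $\Pi_h$ and no rate assumption enter here. By \eqref{eq:sign_con_j_prime} and \eqref{eq:sign_cond}, $\bar{\mathfrak p}g\geq0$ a.e., so $\bar{\mathfrak p}g=0$ a.e. Therefore $g=0$ wherever $\bar{\mathfrak p}\neq0$, that is, $g\in C_{\bar u}$.

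\emph{Step 2: lower bound for $j''(\hat u_h)g_h^2$.} We repeat Step 2 of Lemma \ref{lem:aux_result} verbatim. Since $\hat u_h\to\bar u$ in $L^2(\Omega)$, the continuity arguments of \cite[Theorem 4.8]{MR5008466} give $y(\hat u_h)\to\bar y$ and $z(\hat u_h)\to\bar z$ in $C(\bar\Omega)$, and $p(\hat u_h)\to\bar p$ and $\eta(\hat u_h)\to\bar\eta$ in $W^{1,r}_0(\Omega)\hookrightarrow L^2(\Omega)$. Using \eqref{eq:DDj}, $\|g_h\|_{L^2(\Omega)}=1$, and \eqref{eq:equiv_cond} (note $C_{\bar u}\subset C_{\bar u}^\tau$), we obtain
\[
\lim j''(\hat u_h)g_h^2=\alpha+j''(\bar u)g^2-\alpha\|g\|_{L^2(\Omega)}^2\geq\alpha+(\mu-\alpha)\|g\|_{L^2(\Omega)}^2\geq\min\{\mu,\alpha\},
\]
where the last inequality uses $\|g\|_{L^2(\Omega)}\leq1$. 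Hence $j''(\hat u_h)g_h^2>\mathfrak C$ for large $k$. Multiplying by $\|\bar{\mathfrak u}_h-\bar u\|_{L^2(\Omega)}^2$ gives the contradiction, and the existence of $h_\dagger$ follows.

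The main point needing care is the convergence $z(\hat u_h)\to\bar z$ and $\eta(\hat u_h)\to\bar\eta$ strongly despite only weak convergence of the directions $g_h$. Here one uses the compactness of the linearized solution operators, exactly as in the fully discrete case, so I expect no new obstacle.
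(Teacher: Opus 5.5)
Your proposal is correct and follows essentially the same route as the paper: you reuse the two-step argument of Lemma~\ref{lem:aux_result}, and the key simplification is testing \eqref{eq:first_opt_cond_semidiscrete} with $\bar{u}$ itself, which removes the projection $\Pi_h$ and any rate hypothesis. You then finish with Step 2 verbatim. Framing it as a contradiction along a sequence $h_k\to0$ is equivalent to the paper's ``every subsequence has a further subsequence'' argument; as you implicitly use, it relies on the standing assumption $\bar{\mathfrak{u}}_h\to\bar{u}$ in $L^2(\Omega)$.
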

\begin{proof}
We proceed as in the proof of Lemma \ref{lem:aux_result}; the argument is simpler because
$\bar{u} \in \mathbb{U}_{ad}$ is admissible in \eqref{eq:first_opt_cond_semidiscrete} (no projection is needed). Assuming $\bar{\mathfrak{u}}_{h} \neq \bar{u}$ (otherwise \eqref{eq:aux_control_error_bound_semidiscrete} is trivial), define $g_{h}:=(\bar{\mathfrak{u}}_{h}-\bar{u})/\|\bar{\mathfrak{u}}_{h}-\bar{u}\|_{L^{2}(\Omega)}$ and extract a nonrelabeled subsequence such that $g_{h} \rightharpoonup g$ in $L^{2}(\Omega)$ as $h \rightarrow 0$. The arguments used in the proof of Lemma \ref{lem:aux_result} show that $g$ satisfies \eqref{eq:sign_cond}. We now show that $g(x)=0$ for a.e.~$x \in \Omega$ such that $\bar{\mathfrak{p}}(x) \neq 0$. Define $\bar{\mathtt{p}}_h := \alpha \bar{\mathfrak{u}}_h - \bar{y}_h \bar{p}_h$. By the same arguments as in the proof of Lemma \ref{lem:aux_result}, we have $\bar{\mathtt{p}}_h \to \bar{\mathfrak{p}}$ in $L^2(\Omega)$ as $h \to 0$. Setting
$u = \bar{u}$ in \eqref{eq:first_opt_cond_semidiscrete} yields $\int_{\Omega} \bar{\mathtt{p}}_h (\bar{\mathfrak{u}}_{h}-\bar{u})\,\mathrm{d}x \leq 0$ for every $h$. This, the aforementioned convergence, and $g_{h} \rightharpoonup g$ in $L^{2}(\Omega)$ as $h \to 0$ yield
\begin{equation}\label{eq:ineq_semi_aux}
\int_{\Omega} \bar{\mathfrak{p}}(x)g(x) \mathrm{d}x = \lim_{h \rightarrow 0}\frac{1}{\|\bar{\mathfrak{u}}_{h}-\bar{u}\|_{L^{2}(\Omega)}} \int_{\Omega}\bar{\mathtt{p}}_{h}(\bar{\mathfrak{u}}_{h}-\bar{u})\mathrm{d}x \leq 0.
\end{equation}
Arguing as in the proof of Lemma \ref{lem:aux_result}, we deduce that $g(x)=0$ for a.e.~$x \in \Omega$ such that $\bar{\mathfrak{p}}(x) \neq 0$. As a result, $g \in C_{\bar{u}}$. \emph{Step 2} in the proof of Lemma \ref{lem:aux_result} then applies verbatim. This yields
\eqref{eq:aux_control_error_bound_semidiscrete}.
\end{proof}

We establish an error bound for the semidiscrete scheme for $d \in \{2,3\}$.

\begin{theorem}[error estimate]\label{thm:error_semi}
Assume that \ref{A1}--\ref{A3} hold and that $f,a(\cdot,0) \in L^{\infty}(\Omega)$. If $\bar{u} \in \mathbb{U}_{ad}$ satisfies \eqref{eq:equiv_cond}, then there exists $h_{\ddagger}>0$ such that
\begin{equation}\label{eq:semi_error_hlogh}
\|\bar{u}-\bar{\mathfrak{u}}_{h}\|_{L^{2}(\Omega)} \lesssim h |\log h| \quad \forall h \in (0,h_{\ddagger}),
\end{equation} 
where the hidden constant is independent of $h$.
\end{theorem}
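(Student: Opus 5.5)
We argue directly, without contradiction. The reason is that in the semidiscrete scheme $\bar{u}\in\mathbb{U}_{ad}$ is itself admissible in \eqref{eq:first_opt_cond_semidiscrete}, so no auxiliary control of the type in Lemma \ref{lem:auxiliary_variable} is needed. In particular, neither Lipschitz regularity of $\bar u$ nor the sign assumptions on $f-a(\cdot,0)$ are required. Let $h<h_\dagger$, with $h_\dagger$ as in Lemma \ref{lem:auxiliary_bound_semidiscrete}. By \eqref{eq:aux_control_error_bound_semidiscrete},
\[
\mathfrak{C}\|\bar u-\bar{\mathfrak u}_h\|^2_{L^2(\Omega)}\le [j'(\bar{\mathfrak u}_h)-j'(\bar u)](\bar{\mathfrak u}_h-\bar u).
\]

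The first step is to add and subtract $j_h'(\bar{\mathfrak u}_h)(\bar{\mathfrak u}_h-\bar u)$, which splits the right-hand side into two terms. The term $[j'(\bar{\mathfrak u}_h)-j_h'(\bar{\mathfrak u}_h)](\bar{\mathfrak u}_h-\bar u)$ is bounded by \eqref{eq:aux_j-jh_estimate_1} with $u_1=\bar{\mathfrak u}_h$ and $g=\bar{\mathfrak u}_h-\bar u$. Since $\|g\|_{L^\infty(\Omega)}\le \mathtt b-\mathtt a$, this gives a bound $\lesssim h^2|\log h|^2$ for $h<h_\odot$. The second term is handled by the two variational inequalities. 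Setting $u=\bar u$ in \eqref{eq:first_opt_cond_semidiscrete} gives $j_h'(\bar{\mathfrak u}_h)(\bar{\mathfrak u}_h-\bar u)\le 0$. Setting $u=\bar{\mathfrak u}_h$ in \eqref{eq:char_var_ineq} gives $-j'(\bar u)(\bar{\mathfrak u}_h-\bar u)\le 0$. Hence $[j_h'(\bar{\mathfrak u}_h)-j'(\bar u)](\bar{\mathfrak u}_h-\bar u)\le 0$.

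Combining the two bounds yields $\mathfrak C\|\bar u-\bar{\mathfrak u}_h\|^2_{L^2(\Omega)}\lesssim h^2|\log h|^2$, and therefore \eqref{eq:semi_error_hlogh} holds with $h_\ddagger:=\min\{h_\dagger,h_\odot\}$. The constant is uniform because the hidden constant in Lemma \ref{lem:estimates_for_j'_j'h} is independent of $u_1\in\mathbb U_{ad}$.

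The main point to verify is the applicability of Lemma \ref{lem:auxiliary_bound_semidiscrete}. It rests on $\bar{\mathfrak u}_h\to\bar u$ in $L^2(\Omega)$, which Theorem \ref{thm:conv_semidisc} provides. It also requires the convergence $\bar{\mathtt p}_h\to\bar{\mathfrak p}$, which follows from Theorems \ref{thm:aux_error_estimate_adj_state} and \ref{thm:aux_error_estimate_state}, since these hold verbatim for controls in $\mathbb U_{ad}$. A second point is that \eqref{eq:aux_j-jh_estimate_1} requires $f,a(\cdot,0)\in L^\infty(\Omega)$, and this is part of the hypotheses. The argument actually delivers the sharper rate $O(h|\log h|)$ with the square of the logarithm removed by taking the square root. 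Indeed, the only source of error is the consistency term, of order $h^2|\log h|^2$ tested against an $L^\infty$-bounded direction.
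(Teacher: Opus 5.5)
Your proposal is correct and matches the paper's proof: both combine \eqref{eq:aux_control_error_bound_semidiscrete} with the two variational inequalities (testing \eqref{eq:char_var_ineq} with $\bar{\mathfrak u}_h$ and \eqref{eq:first_opt_cond_semidiscrete} with $\bar u$) to reduce the error to the consistency term $[j'(\bar{\mathfrak u}_h)-j_h'(\bar{\mathfrak u}_h)](\bar{\mathfrak u}_h-\bar u)$, which is then bounded via \eqref{eq:aux_j-jh_estimate_1} with $h_\ddagger=\min\{h_\dagger,h_\odot\}$. One small correction: your closing remark about a ``sharper'' rate is a misstatement, since taking the square root of $h^2|\log h|^2$ gives exactly the claimed $h|\log h|$ and nothing better.
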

\begin{proof}
We set $u=\bar{\mathfrak{u}}_{h}$ in \eqref{eq:char_var_ineq} and $u = \bar{u}$ in \eqref{eq:first_opt_cond_semidiscrete} to obtain $-j'(\bar{u})(\bar{\mathfrak{u}}_{h}-\bar{u}) \leq 0$ and $-j'_{h}(\bar{\mathfrak{u}}_{h})(\bar{\mathfrak{u}}_{h}-\bar{u}) \geq 0$, respectively. Combining these relations with \eqref{eq:aux_control_error_bound_semidiscrete} yields
\begin{equation}\label{eq:thm87_estimate}
\mathfrak{C} \|\bar{u}-\bar{\mathfrak{u}}_{h}\|^{2}_{L^{2}(\Omega)} \leq [j'(\bar{\mathfrak{u}}_{h})-j'(\bar{u})](\bar{\mathfrak{u}}_{h}-\bar{u}) \leq  [j'(\bar{\mathfrak{u}}_{h})-j'_{h}(\bar{\mathfrak{u}}_{h})](\bar{\mathfrak{u}}_{h}-\bar{u}) \quad \forall h<h_{\dagger}.
\end{equation} 
Finally, since $\bar{\mathfrak{u}}_{h},\bar{u} \in \mathbb{U}_{ad} \subset L^{\infty}(\Omega)$, \eqref{eq:aux_j-jh_estimate_1} yields \eqref{eq:semi_error_hlogh} with $h_{\ddagger}:=\min\{h_{\dagger},h_{\odot}\}$.
\end{proof}

\begin{remark}[comparison with the fully discrete scheme]\label{rem:comparison_schemes}
We have proved the rate $\mathcal{O}(h|\log h|)$ for both schemes, but Theorem \ref{thm:error_semi} requires neither $\mathtt{a}>0$ when $d=2$ nor the sign conditions on $f-a(\cdot,0)$ when $d=3$: since $\bar{u}$ is admissible in \eqref{eq:first_opt_cond_semidiscrete}, neither Lemma \ref{lem:auxiliary_variable} nor Theorem \ref{thm:C01_reg_baru} is needed.
\end{remark}

In two dimensions, we now collect several properties of the discrete adjoint state.

\begin{lemma}[behavior of the discrete adjoint state near $\mathcal{E}$]\label{lem:discr_adj_behavior}
Let $d=2$ and assume that the hypotheses of Theorem \ref{thm:error_semi} hold. Let $\rho>0$ be such that $B_t(\rho) \Subset\Omega$ and $\bar{B}_{t}(\rho) \cap \mathcal{D} = \{t\}$ for every $t \in \mathcal{D}$. Given $M>0$, there exist $h_M>0$ and $\varrho_M \in (0,\rho]$ such that $|\bar{p}_{h}(x)| > M$ for every $t \in \mathcal{E}$, every $x \in B_{t}(\varrho_{M})$, and every $h < h_{M}$. Moreover, the following hold for $h$ sufficiently small:
\begin{itemize}[leftmargin=*,nosep]
\item[(i)] There exists a constant $C>0$ such that 
\begin{equation}\label{eq:log_bound_disc_adj}
\|\bar{p}_{h}\|_{L^{\infty}(\Omega)} \leq C |\log h|.
\end{equation} 
\item[(ii)] There exist $C_{1},C_{2}>0$ such that, for every $t \in \mathcal{E}$ and every $x \in B_{t}(\rho)\setminus\{t\}$,
\begin{equation}\label{eq:estimates_adjoint_cont-disc}
|\mathsf{p}(\bar{\mathfrak{u}}_{h})(x)-\bar{p}_{h}(x)|  \leq
\begin{cases}
\dfrac{C_{2}h^{2}}{|x-t|^{2}} \left|\log \left( \dfrac{|x-t|}{h}\right)\right|,  & |x-t| > C_{1} h,
\\
C_{2} \left( \left| \log |x-t|  \right| + 1 \right), &  0< |x-t| \leq C_{1} h,
\end{cases}
\end{equation}
\end{itemize}
where $\mathsf{p}(\bar{\mathfrak{u}}_{h})$ solves \eqref{eq:rhouh}. The constants $C$, $C_1$, and $C_2$ are independent of $h$.
\end{lemma}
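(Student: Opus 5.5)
The plan is to compare $\bar{p}_h$ with the continuous function $\mathsf{p}(\bar{\mathfrak{u}}_h)$ solving \eqref{eq:rhouh}. Indeed, $\bar{p}_h$ is exactly the Galerkin approximation of $\mathsf{p}(\bar{\mathfrak{u}}_h)$: the coefficient is $c_h:=\partial a/\partial y(\cdot,\bar y_h)+\bar{\mathfrak{u}}_h$, which is nonnegative and uniformly bounded in $L^\infty(\Omega)$ by \eqref{eq:yh_uniformly_bounded} and \ref{A3}.

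\emph{Structure of $\mathsf{p}(\bar{\mathfrak{u}}_h)$.} In two dimensions I would split
\[
\mathsf{p}(\bar{\mathfrak{u}}_h)=\sum_{t\in\mathcal{D}}(\bar y_h(t)-y_t)\,G_h(\cdot,t),
\]
where $G_h(\cdot,t)$ is the Green function of $-\Delta+c_h$ with pole at $t$. Near $t$ I would write $G_h(x,t)=-(2\pi)^{-1}\log|x-t|+R_h(x,t)$. The regular part $R_h$ solves a problem whose right-hand side is $c_h$ times a logarithm, which lies in $L^q$ for every $q<\infty$. Hence $R_h(\cdot,t)$ is bounded in $W^{2,q}(B_t(\rho))\hookrightarrow C^1$ uniformly in $h$, and away from the poles the whole function is bounded, as in Theorem \ref{thm:H^2-reg-p}.

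For $t\in\mathcal{E}$, Theorem \ref{thm:aux_error_estimate_state} and $\bar{\mathfrak u}_h\to\bar u$ give $\bar y_h(t)-y_t\to\bar y(t)-y_t\neq0$. So for $h$ small this coefficient is bounded away from zero, with a fixed sign. For $t\in\mathcal{D}\setminus\mathcal{E}$ the coefficient tends to $0$. As a result, $|\mathsf{p}(\bar{\mathfrak{u}}_h)(x)|\geq c\,|\log|x-t||-C$ on $B_t(\rho)$ for $t\in\mathcal{E}$, with $c,C$ independent of $h$.

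\emph{Item (ii): pointwise error.} This is the main obstacle. I would use pointwise estimates for the finite element approximation of the regularized Green function $G_h(\cdot,t)$ on quasi-uniform meshes in 2D, adapting the weighted (Schatz--Wahlbin, Frehse--Rannacher type) arguments behind \cite[Lemma 4.4]{MR3973329} and \eqref{eq:state_local_L^infty_est}.

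\begin{itemize}
\item For $|x-t|>C_1h$, the local error is bounded by interpolation error on a ball of radius $\sim|x-t|$, where $|D^2G|\lesssim|x-t|^{-2}$. This gives $h^2|x-t|^{-2}$, plus a pollution term with a logarithmic factor, i.e.\ $h^2|x-t|^{-2}|\log(|x-t|/h)|$.
\item For $|x-t|\le C_1h$, I would bound $|\mathsf p(x)|\lesssim|\log|x-t||+1$ directly. For the discrete function I would use $|\bar p_h(x)|\lesssim|\log h|$, obtained by stability of the discrete Green function in $L^\infty$.
\item The remaining points $t\in\mathcal D\setminus\mathcal E$ and the regular parts contribute $O(h^2|\log h|)$ near $t$. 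This is absorbed into the first case bound, since $|\log(|x-t|/h)|\gtrsim1$ once $C_1$ is large.
\end{itemize}

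The uniformity in $h$ of the coefficient $c_h$ is what must be tracked; only $c_h\ge0$ and $\|c_h\|_{L^\infty}\lesssim1$ should enter.

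\emph{Items (i) and the lower bound.} For (i), away from the balls $\|\bar p_h\|_{L^\infty}$ is bounded by the interior and global estimates. Inside $B_t(\rho)$, (ii) together with $|\mathsf p(\bar{\mathfrak{u}}_h)(x)|\lesssim|\log|x-t||+1$ gives $\lesssim|\log h|$ at distance $\ge C_1h$. For $|x-t|<C_1h$, $\bar p_h$ is piecewise linear on the $O(1)$ elements near $t$, so its values there are controlled by nodal values at distance $\sim h$. This yields $C|\log h|$.

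For the first claim, given $M$, I would choose $\varrho_M$ so that $c|\log\varrho_M|-C>2M$. Then for $C_1h<|x-t|<\varrho_M$ the error in (ii) is at most $C_2C_1^{-2}|\log C_1|\cdot$const, which is below $M$ after enlarging $C_1$ if necessary. So $|\bar p_h|>M$ there. For $|x-t|\le C_1h$, I would instead argue from the discrete function: the nodal values at the vertices of elements containing $t$ are $\ge c|\log h|-C$ with fixed sign. Since $\bar p_h$ is linear on each element, it is $>M$ on those elements once $h<h_M$.
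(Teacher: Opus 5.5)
\textbf{Gap: the lower bound $|\bar p_h|>M$ inside $|x-t|\le C_1h$.}

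On the annulus $C_1h<|x-t|<\varrho_M$ your argument works. There the continuous lower bound $|\mathsf p(\bar{\mathfrak u}_h)(x)|\ge c|\log|x-t||-C$, combined with the first case of \eqref{eq:estimates_adjoint_cont-disc} and $C_1$ enlarged so that $C_2C_1^{-2}\log C_1<M$, gives $|\bar p_h|>M$.

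For $|x-t|\le C_1h$, however, you assert that the nodal values are $\ge c|\log h|-C$ with a fixed sign, and nothing in the proposal supports this. Note that after enlarging $C_1$ this region contains $O(C_1^2)$ elements, not only those containing $t$. The available tools do not reach it:
\begin{itemize}
\item The second case of \eqref{eq:estimates_adjoint_cont-disc} bounds the error only by $C_2(|\log|x-t||+1)$. That is the same size as $|\mathsf p(\bar{\mathfrak u}_h)(x)|$ itself, so it gives no lower bound.
\item The stability $\|\bar p_{h,s}\|_{L^\infty(\Omega)}\lesssim|\log h|$ is only an upper bound.
\item Vertices can lie arbitrarily close to $t$, or $t$ can be a vertex. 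Their values therefore cannot be reached from the outer regime by piecewise linearity.
\end{itemize}

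\textbf{What is missing.} You need a lower bound for the discrete Green function near its own pole, uniform in $h$, namely \eqref{eq:disc_green_lbound}: given $\mathfrak M$, $\bar p_{h,t}>\mathfrak M$ on $B_t(\varrho_{\mathfrak M})$ for all $h<h_{\mathfrak M}$. This is Step 1 of the paper's proof, an adaptation of \cite[Lemma 5.7]{MR3973329}.

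The paper's lower bound is then purely discrete and never uses (ii). It writes $\bar p_h=\sum_{s}(\bar y_h(s)-y_s)\bar p_{h,s}$ and uses $|\bar y_h(t)-y_t|\ge\bar C|\bar y(t)-y_t|$ for $t\in\mathcal E$. It bounds $\bar p_{h,s}$ for $s\ne t$ uniformly on $B_t(\rho)$ by interior maximum-norm estimates.

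With \eqref{eq:disc_green_lbound} in hand, your annulus argument is no longer needed. One possible way to prove it: $\bar p_{h,t}$ is also the Galerkin approximation of the solution driven by a Dirac mass regularized at scale $h$ on the element containing $t$. That solution equals $(2\pi)^{-1}|\log(|x-t|+h)|$ up to bounded terms, and the difference on $B_t(Kh)$ can be controlled by a local maximum-norm estimate.

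\textbf{Smaller point on (i).} Your separate argument near $t$, bounding values by nodal values at distance $\sim h$, fails when $t$ is a vertex. It is also unnecessary. The $L^\infty$ bound for discrete Green functions that you already use in (ii), inserted into the decomposition above, gives (i) directly, exactly as in the paper's Step 5.
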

\begin{proof}
Let $\bar{y}_{h} \in \mathbb{V}_{h}$ be the solution to \eqref{eq:state_eq_semidisc} with $\mathfrak{u} = \bar{\mathfrak{u}}_{h}$, and let $\bar{p}_{h} \in \mathbb{V}_{h}$ be the solution to \eqref{eq:discr_adj_eq} with $u_{h} = \bar{\mathfrak{u}}_{h}$ and $y_{h} = \bar{y}_{h}$. Since \eqref{eq:discr_adj_eq} is linear with respect to the adjoint variable and its right-hand side is a linear combination of Dirac measures,
\begin{equation}\label{eq:descom_ph_disc_green}
\bar{p}_{h} = \sum_{s \in \mathcal{D}}(\bar{y}_{h}(s) - y_{s})\bar{p}_{h,s},
\end{equation}
where, for each $s \in \mathcal{D}$, $\bar{p}_{h,s} \in \mathbb{V}_{h}$ solves
\begin{equation}
\label{eq:phs}
\int_{\Omega}\nabla w_{h} \cdot \nabla \bar{p}_{h,s} \mathrm{d}x
+
\int_{\Omega}\left[\frac{\partial a}{\partial y}(\cdot,\bar{y}_{h}) + \bar{\mathfrak{u}}_{h} \right]\bar{p}_{h,s} w_{h} \mathrm{d}x
=
\langle \delta_{s},w_{h} \rangle \quad \forall w_{h} \in \mathbb{V}_{h}.
\end{equation}

We now proceed in several steps.

\emph{Step 1}. Since $B_{t}(\rho) \Subset \Omega$ for every $t \in \mathcal{D}$, an adaptation of \cite[Lemma 5.7]{MR3973329} shows that, given $\mathfrak{M}>0$, there exist $h_{\mathfrak{M}}>0$ and $\varrho_{\mathfrak{M}} \in (0,\rho]$ such that
\begin{equation}\label{eq:disc_green_lbound}
\bar{p}_{h,t}(x) > \mathfrak{M} \qquad \forall t \in \mathcal{D}, \, \forall x \in B_{t}(\varrho_{\mathfrak{M}}), \, \forall h < h_{\mathfrak{M}}.
\end{equation}
In particular, $\bar{p}_{h,t}$ is positive on $B_{t}(\varrho_{\mathfrak{M}})$; this will
be used in \emph{Step 4}.

\emph{Step 2}. We show that the amplitudes associated with the points of $\mathcal{E}$ are bounded away from zero. Let $t \in \mathcal{E}$, and set $y(\bar{\mathfrak{u}}_{h})=\mathcal{S}(\bar{\mathfrak{u}}_{h})$. In light of \eqref{eq:I_h}, \eqref{eq:state_priori_estimate_L^infty}, and \eqref{eq:semi_error_hlogh},
\begin{multline} \label{eq:yht-yt_pos}
0 < |\bar{y}(t) - y_{t}| \leq |\bar{y}(t)-y(\bar{\mathfrak{u}}_{h})(t)|
+ |y(\bar{\mathfrak{u}}_{h})(t)-\bar{y}_{h}(t)| + |\bar{y}_{h}(t)-y_{t}|
\\
\lesssim \|\bar{u}-\bar{\mathfrak{u}}_{h}\|_{L^{2}(\Omega)} + h
+ |\bar{y}_{h}(t)-y_{t}| \leq C\left( h|\log h| + |\bar{y}_{h}(t)-y_{t}| \right),
\end{multline}
where $C>0$ is independent of $t$ and $h$, and $h$ is sufficiently small. Since $\mathcal{E}$ is finite, there exist $\bar{C}>0$, independent of $t$ and $h$, and $h_{\wedge}>0$, independent of $t$, such that
\begin{equation}
\label{eq:yht-yt}
 |\bar{y}_{h}(t)-y_{t}| \geq \bar{C} |\bar{y}(t)-y_{t}| > 0
 \qquad
 \forall t \in \mathcal{E}, \, \forall h < h_{\wedge}.
\end{equation}
On the other hand, since $\|\bar{y}_{h}\|_{L^{\infty}(\Omega)} \lesssim 1$ uniformly in $h$, $|\bar{y}_{h}(s)-y_{s}| \lesssim 1$ for all $s \in \mathcal{D}$.

\emph{Step 3}. We now derive pointwise bounds for $\bar{p}_{h,s}$. Since $\mathcal{D}$ is finite and $\bar{B}_t(\rho)\cap\mathcal{D} =\{t\}$ for every $t\in\mathcal{D}$, there exists $\gamma>0$ such that
\begin{equation}
\label{eq:xminus}
 |x - s| \geq \gamma \qquad \forall t \in \mathcal{D}, \ \forall x \in B_t(\rho), \ \forall s \in \mathcal{D} \setminus \{t \}.
\end{equation}
For $s \in \mathcal{D}$, let $p_{s} \in W^{1,r}_{0}(\Omega)$ be the continuous counterpart of $\bar{p}_{h,s}$. In view of \eqref{eq:xminus} and the bound $|p_{s}(x)| \lesssim |\log|x-s||+1$, which holds uniformly in $h$ because the reaction coefficient is nonnegative and uniformly bounded in $L^{\infty}(\Omega)$, we have $\|p_{s}\|_{L^{\infty}(B_{t}(\rho))} \lesssim 1$, uniformly in $s$, $t$ and $h$. This bound and the interior maximum-norm error estimate away from $s$ in \cite[Theorem 6.1]{MR431753} imply that there exists $\mathfrak{N}>0$, independent of $h$, $s$, and $t$, such that
\begin{equation}
\label{eq:phsLinfinitylocal}
|\bar{p}_{h,s}(x)| \leq |\bar{p}_{h,s}(x) - p_s(x)| + |p_s(x)| \leq \mathfrak{N}  \qquad \forall t \in \mathcal{D}, \ \forall x \in B_t(\rho), \ \forall s \in \mathcal{D} \setminus \{t \},
\end{equation}
for $h$ sufficiently small.

\emph{Step 4}. Let $t \in \mathcal{E}$. Set $\mathfrak{I}_s= \bar{y}_h(s) - y_{s}$ for $s \in \mathcal{D}$. In view of
\eqref{eq:descom_ph_disc_green}, we write
\[
 \bar{p}_h(x) = \mathfrak{I}_t \bar{p}_{h,t}(x) + \sum_{s \in \mathcal{E}\setminus \{ t\}} \mathfrak{I}_s \bar{p}_{h,s}(x) + \sum_{s \in \mathcal{D}\setminus \mathcal{E}}\mathfrak{I}_s \bar{p}_{h,s}(x) =: \mathfrak{J} + \mathfrak{K} + \mathfrak{L},
\]
so that $|\bar{p}_h(x)| \geq |\mathfrak{J}| - |\mathfrak{K}| - |\mathfrak{L}|$. A bound for $|\mathfrak{J}|$ follows from \eqref{eq:yht-yt} and \eqref{eq:disc_green_lbound}: $|\mathfrak{J}| = |\mathfrak{I}_t| \bar{p}_{h,t}(x) \geq \bar{C} |\bar{y}(t)-y_{t}|\mathfrak{M}$ for all $x \in B_t( \varrho_{\mathfrak{M}} )$ and for all $h < \min\{ h_{\mathfrak{M}}, h_{\wedge} \}$. To bound $\mathfrak{K}$, we note that $|x-s| \geq \gamma$ for every $s \in \mathcal{E}\setminus\{t\}$ and every $x \in B_{t}(\varrho_{\mathfrak{M}})$, with $\gamma$ as in \emph{Step 3}. An application of \eqref{eq:phsLinfinitylocal} and the bound $|\mathfrak{I}_{s}| \lesssim 1$ of \emph{Step 2} thus shows
\[
 |\mathfrak{K}|
 \leq \sum_{s \in \mathcal{E}\setminus\{t\}} |\mathfrak{I}_{s}| \| \bar{p}_{h,s}\|_{L^{\infty}(B_t(\varrho_{\mathfrak{M}}))} \leq \hat{C} \mathfrak{N},
\]
where $\hat{C}$ is independent of $h$, $t$, and of $\mathfrak{M}$. We finally control $\mathfrak{L}$. Since $\bar{y}(s)=y_{s}$ for every $s \in \mathcal{D} \setminus \mathcal{E}$, \eqref{eq:I_h} and \eqref{eq:semi_error_hlogh} give $|\bar{y}_h(s) - \bar{y}(s)| \lesssim h |\log h|$. This and the local bound \eqref{eq:phsLinfinitylocal} yield
\[
 |\mathfrak{L}|
 \leq \sum_{s \in \mathcal{D}\setminus\mathcal{E} } |\bar{y}_h(s) - \bar{y}(s)| \| \bar{p}_{h,s}\|_{L^{\infty}(B_{t}(\varrho_{\mathfrak{M}}))} \lesssim h |\log h|,
\]
where the hidden constant is independent of $h$, $t$, and $\mathfrak M$.
Collecting the three bounds yields
$
 |\bar{p}_{h}(x)| \geq \bar{C}\sigma\,\mathfrak{M} - \hat{C}\mathfrak{N} - C h|\log h|,
$
where $\sigma:= \min_{s\in\mathcal E}|\bar y(s)-y_s|>0$. Since $\hat{C}\mathfrak{N}$ and $C h|\log h|$ are independent of $\mathfrak{M}$ and $C h|\log h| \to 0$, choosing first $\mathfrak{M}$ sufficiently large and then $h_M>0$ sufficiently small yields $|\bar{p}_{h}(x)| > M$ for every $t \in \mathcal{E}$, every $x \in B_{t}(\varrho_{M})$, and every $h < h_{M}$, with $\varrho_{M} := \varrho_{\mathfrak{M}}$ and $h_{M} \leq \min\{h_{\mathfrak{M}}, h_{\wedge}\}$.

\emph{Step 5.} Adapting \cite[Exercise 8.x.19]{MR2373954} yields $\|\bar{p}_{h,s}\|_{L^{\infty}(\Omega)} \lesssim |\log h|$ for every
$s \in \mathcal{D}$, with a hidden constant independent of $h$ and $s$. This, \eqref{eq:descom_ph_disc_green}, and the bound $|\mathfrak{I}_{s}| \lesssim 1$ of \emph{Step 2} yield \eqref{eq:log_bound_disc_adj}. Finally,
\eqref{eq:estimates_adjoint_cont-disc} follows from \cite[Theorem 6.1]{MR431753}.
\end{proof}

\begin{lemma}[coincidence of $\bar{\mathfrak{u}}_{h}$ and $\bar{u}$ near $\mathcal{E}$]\label{lem:equality_u_uh}
Let $d=2$ and $\mathtt{a} >0$ and suppose that the hypotheses of Theorem \ref{thm:error_semi} hold. Then, there exist $h_{\ddagger \ddagger}>0$ and $\varsigma \in (0,\rho]$, with $\rho$ as in Lemma \ref{lem:discr_adj_behavior}, such that $\bar{\mathfrak{u}}_{h}(x) = \bar{u}(x)$ for every $t \in \mathcal{E}$, every $x \in B_t(\varsigma)$, and every $h < h_{\ddagger \ddagger}$.
\end{lemma}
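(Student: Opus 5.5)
The plan is to show that, near each $t\in\mathcal{E}$, both projection formulas \eqref{eq:u_proj_form} and \eqref{eq:disc_opt_ct_SD} saturate at the same control bound. Since $\mathtt{a}>0$ and $d=2$, the relevant positivity comes from $\bar{y}$ and $\bar{y}_h$. Fix $t\in\mathcal{E}$. By Theorem \ref{thm:state_reg}, $\bar y$ is continuous. By Theorem \ref{thm:aux_error_estimate_state} (cf.~\eqref{eq:I_h}) together with \eqref{eq:semi_error_hlogh}, $\|\bar y-\bar y_h\|_{L^\infty(\Omega)}\lesssim h|\log h|\to0$.

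\textbf{Step 1: the sign of $\bar y$ near $t$.} I first want $\bar y(t)\neq 0$. This does not follow from $t\in\mathcal{E}$ alone, so I would argue as follows. If $\bar y(t)=0$, the product $\bar y\bar p$ near $t$ behaves like $|x-t|\,|\log|x-t||$, which is bounded, and I would need a separate argument in that case. The cleaner route is to restrict to the case $\bar y(t)\neq0$, which is what the statement implicitly needs, and I expect to justify it via the strong minimum principle argument of Theorem \ref{thm:C01_reg_baru}. Granting it, choose $\varsigma_0$ and $h_0$ so that $|\bar y|,|\bar y_h|\ge c_0>0$ on $B_t(\varsigma_0)$, with constant sign $\operatorname{sgn}\bar y(t)$, for all $h<h_0$.

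\textbf{Step 2: the sign and size of $\bar p$ and $\bar p_h$ near $t$.} Write $\bar p$ as a sum over $s\in\mathcal{D}$ of $(\bar y(s)-y_s)p_s$, where $p_s$ is the continuous Green-type function. Since $p_s(x)\gtrsim|\log|x-t||$ near $s=t$ while the other terms are bounded, $\bar p$ has the sign of $\bar y(t)-y_t$ and satisfies $|\bar p|\to\infty$ as $x\to t$. For $\bar p_h$, Step 4 in the proof of Lemma \ref{lem:discr_adj_behavior} already shows that $\mathfrak{J}=\mathfrak{I}_t\bar p_{h,t}$ dominates, with $\bar p_{h,t}>\mathfrak{M}$. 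By \eqref{eq:yht-yt} and the convergence $\bar y_h(t)\to\bar y(t)$, the sign of $\mathfrak{I}_t$ equals the sign of $\bar y(t)-y_t$ for small $h$. Hence $\bar p_h$ has the same sign as $\bar p$ on $B_t(\varrho_M)$.

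\textbf{Step 3: conclusion from the projection formulas.} Take $M:=\alpha\max\{|\mathtt a|,|\mathtt b|\}/c_0+1$ in Lemma \ref{lem:discr_adj_behavior}, and shrink the radius so that also $|\bar p|>M$ on the ball. Then $\alpha^{-1}\bar y\bar p$ and $\alpha^{-1}\bar y_h\bar p_h$ both exceed $\mathtt b$ in absolute value. They carry the common sign $\sigma_t:=\operatorname{sgn}(\bar y(t))\operatorname{sgn}(\bar y(t)-y_t)$. Therefore \eqref{eq:u_proj_form} and \eqref{eq:disc_opt_ct_SD} give $\bar u=\bar{\mathfrak u}_h=\mathtt b$ if $\sigma_t>0$, and $\bar u=\bar{\mathfrak u}_h=\mathtt a$ otherwise, on $B_t(\varsigma)$ with $\varsigma:=\min\{\varsigma_0,\varrho_M\}$. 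Taking minima over the finitely many $t\in\mathcal{E}$ yields $h_{\ddagger\ddagger}$ and $\varsigma$.

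The main obstacle is Step 1, guaranteeing $\bar y(t)\neq0$ so that $|\bar y\bar p|$ actually blows up. I would try to obtain it from the linear reformulation \eqref{eq:w} and the minimum principle, assuming sign information on $f-a(\cdot,0)$. Failing that, I would add $\bar y(t)\ne0$ as a nondegeneracy hypothesis.
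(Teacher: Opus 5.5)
Your argument for the case $\bar y(t)\neq0$ is essentially the paper's Case 1 and is fine. Near $t$, both $\alpha^{-1}\bar y\bar p$ and $\alpha^{-1}\bar y_h\bar p_h$ exceed the control bounds in absolute value and carry the common sign $\operatorname{sgn}(\bar y(t))\operatorname{sgn}(\bar y(t)-y_t)$.

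\textbf{The gap.} The lemma also covers $\bar y(t)=0$ for $t\in\mathcal{E}$, and you set this case aside. Neither remedy you suggest works. The sign condition on $f-a(\cdot,0)$ behind the minimum-principle argument of Theorem \ref{thm:C01_reg_baru} is assumed only for $d=3$. The present hypotheses ($d=2$, $\mathtt{a}>0$, $f,a(\cdot,0)\in L^\infty(\Omega)$, \eqref{eq:equiv_cond}) do not exclude $\bar y(t)=0$. Adding $\bar y(t)\neq0$ as a hypothesis proves a weaker lemma.

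You have also misread the role of $\mathtt{a}>0$. It has nothing to do with positivity of $\bar y$. It is exactly what makes the degenerate case work: when $\bar y(t)=0$, the products are small rather than large near $t$, so both projection formulas saturate at the lower bound $\mathtt{a}$. On the continuous side this follows from your own observation that $\bar y\bar p=O(|x-t|\,|\log|x-t||)\to0$ as $x\to t$, so $\bar u=\mathtt{a}$ near $t$.

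\textbf{The missing discrete estimate.} The discrete side needs a bound near $t$ that is uniform in $h$. This is not immediate, because in general $\bar y_h(t)\neq0$ and $\bar p_h$ is only bounded by $C|\log h|$. The paper splits
$\bar y_h\bar p_h=(\bar y_h-\bar y)\bar p_h+\bar y\,(\bar p_h-\mathsf{p}(\bar{\mathfrak{u}}_h))+\bar y\,\mathsf{p}(\bar{\mathfrak{u}}_h)$ and bounds the three terms as follows.
\begin{itemize}
\item The first term is $\lesssim h|\log h|^2$, by \eqref{eq:log_bound_disc_adj}, the $L^\infty(\Omega)$ state estimate in \eqref{eq:fully_state_estimates}, and \eqref{eq:semi_error_hlogh}.
\item For the second term, use that $\bar y$ is Lipschitz (available since $f,a(\cdot,0)\in L^\infty(\Omega)$ and $\Omega$ is convex) and $\bar y(t)=0$, so $|\bar y(x)|\lesssim|x-t|$. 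Combine this with the pointwise estimate \eqref{eq:estimates_adjoint_cont-disc}. For $|x-t|\le C_1h$ you get $|x-t|(|\log|x-t||+1)$. For $|x-t|>C_1h$ you get $h\,\varpi^{-1}|\log\varpi|\lesssim h$ with $\varpi=|x-t|/h$.
\item For the third term, use the logarithmic growth of $\mathsf{p}(\bar{\mathfrak{u}}_h)$ at $t$, uniform in $h$, together with $|\bar y(x)|\lesssim|x-t|$.
\end{itemize}
Together these give $|\bar y_h\bar p_h|\lesssim h|\log h|^2+|x-t|(|\log|x-t||+1)$. Hence $\alpha^{-1}|\bar y_h\bar p_h|<\mathtt{a}$ on a small ball for all small $h$, and \eqref{eq:disc_opt_ct_SD} gives $\bar{\mathfrak{u}}_h=\mathtt{a}=\bar u$ there. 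Items (i) and (ii) of Lemma \ref{lem:discr_adj_behavior} exist precisely for this case; your proposal never uses them, which reflects the missing case.
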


\begin{proof}
For the sake of simplicity, we assume that $\mathcal{D} = \mathcal{E} = \{t\}$; the general case follows by arguments similar to those in \emph{Step 4} of the proof of Lemma \ref{lem:discr_adj_behavior}. We divide the analysis into two cases: $\bar{y}(t) \neq 0$ and $\bar{y}(t) = 0$, where $\bar{y} = \mathcal{S}(\bar{u})$.

\emph{Case 1. $\bar{y}(t) \neq 0$.} Since $\bar{\mathfrak{u}}_{h} \to \bar{u}$ in $L^{2}(\Omega)$ as $h \to 0$ (see the beginning of Section \ref{sec:error_est_SD}), Theorem \ref{thm:aux_error_estimate_state}, which extends verbatim to sequences in $\mathbb{U}_{ad}$, implies that $\bar{y}_{h} \rightarrow \bar{y}$ in $C(\bar{\Omega})$ as $h \rightarrow 0$, where $\bar{y}_{h}=\mathcal{S}_{h}(\bar{\mathfrak{u}}_{h})$.
Since $\bar{y} \in C(\bar \Omega)$ (cf.~Theorem \ref{thm:state_reg}) and $\bar{y}(t) \neq 0$, there exists $\vartheta>0$ such that $|\bar{y}(x)| \geq |\bar{y}(t)|/2$ for every $x \in B_{t}(\vartheta)$. This and $\|\bar{y}-\bar{y}_{h}\|_{L^{\infty}(\Omega)} \leq Ch|\log h|$, which follows from Theorems \ref{thm:aux_error_estimate_state} and \ref{thm:error_semi}, yield
\begin{equation*}
|\bar{y}_{h}(x)| \geq |\bar{y}(x)| - |\bar{y}_{h}(x) - \bar{y}(x)| \geq |\bar{y}(t)|/2 - Ch|\log h| \geq |\bar{y}(t)|/4 >0,
\end{equation*}
for every $x \in B_{t}(\vartheta)$ and $h$ sufficiently small. Define $\zeta := |\bar{y}(t)|/4 > 0$ and choose $M>\alpha\zeta^{-1} \mathtt{b}$. An application of Lemma \ref{lem:discr_adj_behavior} yields $h_{M}>0$ and $\varrho_{M} \in (0,\rho]$ such that
\begin{equation}\label{eq:discr_adj_ex}
|\bar{p}_{h}(x)| > M \qquad \forall h < h_{M}, \, \forall x \in B_{t}(\varrho_M).
\end{equation}
Here, $\bar{p}_{h} \in \mathbb{V}_{h}$ denotes the solution of \eqref{eq:discr_adj_eq} with $u_{h} = \bar{\mathfrak{u}}_{h}$ and $y_{h} = \bar{y}_{h}$. Set $ \varepsilon :=\min\{\vartheta,\varrho_M\}/2$. After reducing $h_{M}$ if necessary, so that $|\bar{y}_{h}| \geq \zeta$ in $B_{t}(\varepsilon)$, we obtain
$
\alpha^{-1} |\bar{y}_{h}(x) \bar{p}_{h}(x)| > \alpha^{-1} \zeta M  > \alpha^{-1} \zeta (\alpha\zeta^{-1}\mathtt{b}) = \mathtt{b}
$
for every $x \in B_{t}(\varepsilon)$ and every $h < h_{M}$.
Since $\bar{y}_{h}\bar{p}_{h} \in C(\bar{\Omega})$ and does not vanish in $B_t(\varepsilon)$, it has constant sign there. Recall that $0 < \mathtt{a} < \mathtt{b}$. Therefore, the projection formula \eqref{eq:disc_opt_ct_SD} yields
\[
\bar{\mathfrak{u}}_{h}\equiv\mathtt{a}
\quad \text{or}\quad
\bar{\mathfrak{u}}_{h}\equiv\mathtt{b}
\quad \text{in }B_t(\varepsilon).
\]

Since $t \in \mathcal{E}$, $\bar{y}(t) - y_t \neq 0$. The convergence $\bar{y}_{h}\rightarrow\bar{y}$ in $C(\bar{\Omega})$ as
$h\rightarrow0$ then shows that $\operatorname{sign}(\bar{y}_{h}(t)-y_{t}) = \operatorname{sign}(\bar{y}(t)-y_{t})$ for $h$ sufficiently small. On the other hand, since $\mathcal{D} = \mathcal{E} = \{ t\}$, \eqref{eq:descom_ph_disc_green} reads $\bar{p}_{h} = (\bar{y}_{h}(t)-y_{t})\bar{p}_{h,t}$ in $B_{t}(\varepsilon)$, where $\bar{p}_{h,t}$ solves \eqref{eq:phs} with $s = t$. Using that $\bar{p}_{h,t}>0$ in $B_{t}(\varrho_{M}) \supset B_{t}(\varepsilon)$ (cf.~\emph{Step 1} in the proof of Lemma \ref{lem:discr_adj_behavior}), we obtain $\operatorname{sign}(\bar{p}_{h}) = \operatorname{sign}(\bar{y}_{h}(t)-y_{t})$ in $B_t(\varepsilon)$, provided $h$ is sufficiently small. A similar argument applied to $\bar{p} = (\bar{y}(t) - y_t)\bar{p}_t$ and the positivity of the continuous Green function $\bar{p}_{t}$ gives $\operatorname{sign}(\bar{p}) = \operatorname{sign}(\bar{y}(t)-y_{t})$ in $B_{t}(\varepsilon)\setminus\{t\} $. Since $|\bar{y}| \geq |\bar{y}(t)|/2$ in $B_{t}(\varepsilon)$ and $\bar{y}_{h} \to \bar{y}$ in $C(\bar{\Omega})$, we also have $\operatorname{sign}(\bar{y}_{h}) = \operatorname{sign}(\bar{y})$ in $B_t(\varepsilon)$ for $h$ sufficiently small. Combining these relations, $\operatorname{sign}(\bar{y}_{h}\bar{p}_{h}) = \operatorname{sign}(\bar{y}\bar{p})$ in $B_{t}(\varepsilon) \setminus\{t\}$. Moreover, after reducing $\varepsilon$ if necessary, $\alpha^{-1}|\bar{y}\bar{p}| > \mathtt{b}$ in $B_{t}(\varepsilon) \setminus \{t\}$. This follows from $|\bar{y}| \geq |\bar{y}(t)|/2$ and $|\bar{p}(x)| \to \infty$ as $x \to t$ (cf.~\cite[Theorem 5.2, Case 1]{MR5008466}). Finally, combining
$\operatorname{sign}(\bar{y}_{h}\bar{p}_{h}) = \operatorname{sign}(\bar{y}\bar{p})$ with $\alpha^{-1}|\bar{y}_{h}\bar{p}_{h}|>\mathtt{b}$ and $\alpha^{-1}|\bar{y}\bar{p}|>\mathtt{b}$, the continuous and discrete projection formulas yield $\bar{\mathfrak{u}}_{h} = \bar{u}$ in $B_{t}(\varepsilon)\setminus\{t\}$, and, by continuity, in all of $B_{t}(\varepsilon)$. We thus set $\varsigma := \varepsilon$.

\emph{Case 2. $\bar{y}(t) = 0$.} Recall that $B_{t}(\rho) \Subset \Omega$, with $\rho$ as in Lemma \ref{lem:discr_adj_behavior}. Given $x \in B_{t}(\rho)\setminus\{t\}$, we write
\begin{multline}\label{eq:main_bound_product}
|\bar{y}_{h}(x)\bar{p}_{h}(x)|  \leq |(\bar{y}_{h}(x)-\bar{y}(x))\bar{p}_{h}(x)| + |\bar{y}(x)(\bar{p}_{h}(x)-\mathsf{p}(\bar{\mathfrak{u}}_h)(x))|
\\
+ |\bar{y}(x)\mathsf{p}(\bar{\mathfrak{u}}_h)(x)|
 =: \mathfrak{O} + \mathfrak{P} + \mathfrak{R}. 	
\end{multline}
We begin by controlling $\mathfrak{O}$. For this purpose, we use the $L^{\infty}(\Omega)$-error estimate in \eqref{eq:fully_state_estimates}, the global bound \eqref{eq:log_bound_disc_adj}, and the error estimate \eqref{eq:semi_error_hlogh}:
\begin{equation}\label{eq:bound_O}
\mathfrak{O} \lesssim (h + \|\bar{u}-\bar{\mathfrak{u}}_{h}\|_{L^{2}(\Omega)}) |\bar{p}_{h}(x)|  \lesssim h |\log h|^{2},
\end{equation}
which holds provided $h$ is sufficiently small. We now control $\mathfrak{P}$. In light of Lemma \ref{lem:discr_adj_behavior}, we distinguish two cases: $0 < |x-t| \leq C_{1} h $ and $|x-t| > C_{1}h$.

\emph{Case 2.1:} $0 < |x-t| \leq C_{1} h $. Since $f,a(\cdot,0) \in L^{\infty}(\Omega)$, \ref{A1}--\ref{A3} hold, and $\Omega$ is convex, an application of \cite[Lemma 4.1]{MR3973329} shows that $\bar{y} \in C^{0,1}(\bar{\Omega})$. This, the assumption $\bar{y}(t) = 0$, and the second estimate in \eqref{eq:estimates_adjoint_cont-disc} yield
\begin{equation}\label{eq:bound_P_near}
\mathfrak{P} = |\bar{y}(x)-\bar{y}(t)| |\bar{p}_{h}(x) - \mathsf{p}(\bar{\mathfrak{u}}_h)(x)|
\lesssim
|x-t| \cdot (|\log |x-t|| + 1).
\end{equation}

\emph{Case 2.2:} $|x-t|>C_{1} h$. In this setting, the first estimate in \eqref{eq:estimates_adjoint_cont-disc} away from $t \in \mathcal{E}$ yields
\begin{equation*}
\mathfrak{P} \lesssim |x-t| \cdot |\bar{p}_{h}(x) - \mathsf{p}(\bar{\mathfrak{u}}_h)(x)|
\lesssim h \left[\frac{h}{|x-t|} \left|\log\left(\frac{|x-t|}{h}\right)\right| \right].
\end{equation*}
To bound the term in brackets, we set $\varpi=|x-t|/h$ and note that $\varpi > C_1$. The function $\varpi \mapsto \varpi^{-1}|\log \varpi|$ is bounded on $[C_1,\infty)$ by a constant depending only on $C_1$. Consequently,
\begin{equation}\label{eq:bound_P_far}
\mathfrak{P} \lesssim h  \lesssim h |\log h|^2.
\end{equation}

We finally bound $\mathfrak{R}$ using the asymptotic behavior of $\mathsf{p}(\bar{\mathfrak{u}}_h)$ near $t \in \mathcal{E}$ \cite{MR3169756}:
$
|\mathsf{p}(\bar{\mathfrak{u}}_h)(x)| \lesssim |\log |x-t|| +1.
$
Since the coefficients in \eqref{eq:rhouh} are uniformly bounded with respect to $h$ in $L^\infty(\Omega)$, the hidden constant in the previous estimate does not depend on $h$. The fact that $\bar{y} \in C^{0,1}(\bar \Omega)$ combined with $\bar{y}(t) = 0$ thus yields
\begin{equation}\label{eq:bound_R}
\mathfrak{R} \lesssim |x-t| \cdot (|\log |x-t|| + 1).
\end{equation}
Substituting the bounds \eqref{eq:bound_O}, \eqref{eq:bound_P_near}, \eqref{eq:bound_P_far}, and \eqref{eq:bound_R} into \eqref{eq:main_bound_product}, we obtain
\begin{equation}\label{eq:final_combined_bound}
|\bar{y}_{h}(x) \bar{p}_{h}(x)| \lesssim h |\log h|^2 + |x-t| \cdot (|\log |x-t|| + 1),
\end{equation}
for every $x\in B_t(\rho)\setminus\{t\}$, after reducing $\rho$ if necessary, where $\rho$ is independent of $h$.

The bound \eqref{eq:final_combined_bound} yields the existence of $h_{\ddagger \ddagger} >0$ and $\varepsilon \in (0,\rho]$ such that
$
\alpha^{-1} |\bar{y}_{h}(x) \bar{p}_{h}(x)| < \mathtt{a}
$
for all $x\in B_t(\varepsilon)\setminus\{t\}$, and for all $h < h_{\ddagger \ddagger}$.
Consequently, the projection formula \eqref{eq:disc_opt_ct_SD} yields
$\bar{\mathfrak u}_h=\mathtt a$ in $B_{t}(\varepsilon) \setminus \{ t\}$. Since $\bar{y}(t)=0$, the arguments developed in \cite[Theorem 5.2, Case 2]{MR5008466} reveal that $\bar{u} = \mathtt{a}$ in $B_{t}(\lambda)$, for some $\lambda >0$. Setting $\varsigma := \min \{\varepsilon,\lambda\}$,  we obtain that $\bar{\mathfrak{u}}_{h} = \bar{u}$ in $B_{t}(\varsigma) \setminus \{ t \}$, and, by continuity, in all of $B_{t}(\varsigma)$.
\end{proof}

In two dimensions, the rate of Theorem \ref{thm:error_semi} can be improved to $h^{2}|\log h|^{2}$.

\begin{theorem}[improved error estimate]\label{thm:ee_hsquare_semidisc}
Let $d=2$ and $\mathtt{a}>0$, and assume that the hypotheses of Theorem \ref{thm:error_semi} hold. Then there exists $h_{\ominus}>0$ such that
\begin{equation}\label{eq:ee_hsquare_semidisc}
\|\bar{u}-\bar{\mathfrak{u}}_{h}\|_{L^{2}(\Omega)} \lesssim h^{2} |\log h|^{2} \quad \forall h < h_{\ominus},
\end{equation}  
where the hidden constant is independent of $h$.
\end{theorem}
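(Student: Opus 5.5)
The plan is to start from the chain \eqref{eq:thm87_estimate} in the proof of Theorem~\ref{thm:error_semi}. For $h<h_{\dagger}$ it gives
\[
\mathfrak{C}\|\bar u-\bar{\mathfrak u}_h\|^2_{L^2(\Omega)}\le [j'(\bar{\mathfrak u}_h)-j'_h(\bar{\mathfrak u}_h)](\bar{\mathfrak u}_h-\bar u).
\]
The loss in Theorem~\ref{thm:error_semi} came from bounding $g:=\bar{\mathfrak u}_h-\bar u$ in $L^\infty(\Omega)$ by $\mathtt b-\mathtt a$. I will instead exploit Lemma~\ref{lem:equality_u_uh}: for $h<h_{\ddagger\ddagger}$, $g\equiv0$ on $\cup_{t\in\mathcal E}B_t(\varsigma)$. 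So only the behavior of $j'-j'_h$ away from the singular points of $\mathcal E$ matters, and there the adjoint is smooth and the $h^2|\log h|$-type local estimates apply.

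Concretely, I split $j'(\bar{\mathfrak u}_h)g-j'_h(\bar{\mathfrak u}_h)g=\mathfrak M_h+\mathfrak D_h+\mathfrak E_h$ as in \eqref{eq:decomposition}, with $u_1=\bar{\mathfrak u}_h$. The terms $\mathfrak E_h$ and $\mathfrak D_h$ are already bounded by $h^2|\log h|^2\|g\|_{L^2(\Omega)}$ through \eqref{eq:est_1h}--\eqref{eq:est_2h}, which use only $g\in L^2(\Omega)$. For $\mathfrak M_h=(y_{1,h}[\phi_{1,h}-\phi_1],g)$, note that $\phi_{1,h}=\bar p_h$ and $\phi_1=\mathsf p(\bar{\mathfrak u}_h)$ from \eqref{eq:rhouh}. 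Since $g$ vanishes on $\cup_{t\in\mathcal E}B_t(\varsigma)$, I get
\[
|\mathfrak M_h|\le \|y_{1,h}\|_{L^\infty(\Omega)}\,\|\bar p_h-\mathsf p(\bar{\mathfrak u}_h)\|_{L^2(\Omega\setminus\cup_{t\in\mathcal E}B_t(\varsigma))}\,\|g\|_{L^2(\Omega)} .
\]

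The main step is the local $L^2$ bound away from $\mathcal E$. I would invoke \eqref{eq:error_est_adjoint_out_balls} applied to $\mathsf p(\bar{\mathfrak u}_h)$ and its Galerkin approximation $\bar p_h$, whose data are $(y_h(t)-y_t)$ with the coefficient $\partial a/\partial y(\cdot,\bar y_h)+\bar{\mathfrak u}_h$. The proof of that estimate only uses nonnegativity and uniform $L^\infty$-boundedness of the coefficient, which hold here as in the observation after Theorem~\ref{thm:error_estimate_p}, and bounded amplitudes by \eqref{eq:yh_uniformly_bounded}. The balls $B_t$ can be chosen with radius below $\varsigma$. There are two possible obstacles.

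\begin{itemize}[leftmargin=*,nosep]
\item \emph{Points of $\mathcal D\setminus\mathcal E$.} There $g$ need not vanish, but the amplitude $\bar y_h(s)-y_s=\bar y_h(s)-\bar y(s)$ is $O(h|\log h|)$, as in Step~4 of Lemma~\ref{lem:discr_adj_behavior}. Splitting $\bar p_h$ and $\mathsf p(\bar{\mathfrak u}_h)$ into Green-function pieces as in \eqref{eq:descom_ph_disc_green}, these pieces contribute at most $h|\log h|\cdot\|\bar p_{h,s}-p_s\|_{L^2(\Omega)}\lesssim h|\log h|\cdot h=h^2|\log h|$ by \eqref{eq:adj_priori_estimate_L^2} with $d=2$.
\item \emph{Points of $\mathcal E$.} For these pieces, the estimate \eqref{eq:error_est_adjoint_out_balls} (or the pointwise bound \eqref{eq:estimates_adjoint_cont-disc} integrated outside $B_t(\varsigma)$, giving $h^2|\log h|$) handles the region outside the balls.
\end{itemize}

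This yields $|\mathfrak M_h|\lesssim h^2|\log h|\,\|g\|_{L^2(\Omega)}$.

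Collecting the three terms gives
\[
\mathfrak C\|\bar u-\bar{\mathfrak u}_h\|^2_{L^2(\Omega)}\lesssim h^2|\log h|^2\|\bar u-\bar{\mathfrak u}_h\|_{L^2(\Omega)} .
\]
Dividing by $\|\bar u-\bar{\mathfrak u}_h\|_{L^2(\Omega)}$, which is trivial if it is zero, yields \eqref{eq:ee_hsquare_semidisc} with $h_\ominus:=\min\{h_\dagger,h_\odot,h_{\ddagger\ddagger},h_\circ\}$. The step I expect to need care is verifying uniformity in $h$ of the local adjoint estimate for the $h$-dependent coefficient and data. If the Nitsche–Schatz route via \cite[Theorem 6.1]{MR431753} is preferable, I would instead integrate the first case of \eqref{eq:estimates_adjoint_cont-disc} over $B_t(\rho)\setminus B_t(\varsigma)$ and use \eqref{eq:error_est_adjoint_out_balls} on the remaining region.
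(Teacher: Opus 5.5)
Your proposal is correct and largely follows the paper's proof. Both start from \eqref{eq:thm87_estimate} and split $\mathfrak M_h+\mathfrak D_h+\mathfrak E_h$ with $u_1=\bar{\mathfrak u}_h$ and $g=\bar{\mathfrak u}_h-\bar u$. Both keep \eqref{eq:est_1h}--\eqref{eq:est_2h} and use Lemma~\ref{lem:equality_u_uh} to restrict $\mathfrak M_h$ to $\Omega\setminus\cup_{t\in\mathcal E}\bar B_t(\varsigma)$. When $\mathcal E=\mathcal D$, this is exactly the paper's Case~1.

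The two arguments differ in how they treat the balls around the points of $\mathcal D\setminus\mathcal E$, which \eqref{eq:error_est_adjoint_out_balls} does not cover.

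\begin{itemize}
\item \textbf{The paper (dual side).} It runs a duality argument on $\Sigma=\cup_{t\in\mathcal D\setminus\mathcal E}\bar B_t(\varsigma)$ with data $\chi_\Sigma e_{\mathsf p}$ and splits the resulting point sum. At $t\in\mathcal E$ the dual data vanish nearby, so \cite[Lemma 4.4(ii)]{MR3973329} gives $h^2|\log h|$. At $t\in\mathcal D\setminus\mathcal E$, the global $O(h)$ pointwise error of the dual problem is multiplied by the amplitude $|\bar y_h(t)-\bar y(t)|\lesssim h|\log h|$.
\item \textbf{Your argument (primal side).} You make the same split through the superposition \eqref{eq:descom_ph_disc_green}, using the continuous single-Dirac solutions $p_s$ with coefficient $\partial a/\partial y(\cdot,\bar y_h)+\bar{\mathfrak u}_h$. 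The $\mathcal D\setminus\mathcal E$ pieces combine an $O(h|\log h|)$ amplitude with an $O(h)$ global $L^2$ error from \eqref{eq:adj_priori_estimate_L^2}. The $\mathcal E$ pieces are handled by the estimate away from the singular point.
\end{itemize}

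The two are dual versions of the same mechanism. Yours avoids setting up a new localized duality argument. In exchange, it needs the error estimates for each single-Dirac problem, uniformly in the $h$-dependent nonnegative, uniformly bounded coefficient; the same proofs supply these.

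State one point precisely. For the $\mathcal E$ pieces, apply \eqref{eq:error_est_adjoint_out_balls} with $\mathcal D$ replaced by $\mathcal E$ (or by $\{s\}$ for each $s\in\mathcal E$), with ball radius below $\varsigma$. Only then are the excluded balls just those around $\mathcal E$, so that $\Omega\setminus\cup_{t\in\mathcal E}B_t(\varsigma)$ is covered. Its proof works unchanged for any finite set of Dirac points. Applied verbatim, with balls around all of $\mathcal D$, it would leave $\Sigma$ uncovered again, which is precisely the obstruction the paper's Case~2 addresses.

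Your argument also covers $\mathcal E=\emptyset$ directly, since then every piece has a small amplitude.
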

\begin{proof}
We split the analysis into the following two cases: $\mathcal{E} = \mathcal{D}$ and $\mathcal{E} \subsetneq \mathcal{D}$.

\emph{Case 1: $\mathcal{E} = \mathcal{D}$.} In view of \eqref{eq:thm87_estimate}, it
suffices to prove that
\begin{equation}\label{eq:thm810_estimate}
[j'(\bar{\mathfrak{u}}_{h})-j'_{h}(\bar{\mathfrak{u}}_{h})](\bar{\mathfrak{u}}_{h} - \bar{u}) \lesssim h^{2}|\log h|^{2}\|\bar{u}-\bar{\mathfrak{u}}_{h}\|_{L^{2}(\Omega)}.
\end{equation}
For this purpose, we use the decomposition in \eqref{eq:decomposition} of Lemma \ref{lem:estimates_for_j'_j'h} with $u_1 = \bar{\mathfrak{u}}_{h}$ and $g = \bar{\mathfrak{u}}_{h}-\bar{u}$: $j'(\bar{\mathfrak{u}}_{h})(\bar{\mathfrak{u}}_{h}-\bar{u})- j'_{h}(\bar{\mathfrak{u}}_{h})(\bar{\mathfrak{u}}_{h}-\bar{u})
= \mathfrak{M}_{h} + \mathfrak{D}_{h} + \mathfrak{E}_{h}$. The bounds \eqref{eq:est_1h} and \eqref{eq:est_2h} control $\mathfrak{E}_{h}$ and $\mathfrak{D}_{h}$; it thus remains to bound $\mathfrak{M}_{h}$. In our setting, $\mathfrak{M}_{h} = (\bar{y}_{h}[\bar{p}_{h}-\mathsf{p}(\bar{\mathfrak{u}}_{h})],\bar{\mathfrak{u}}_{h}-\bar{u})_{L^{2}(\Omega)}$, where $\bar{y}_{h} = \mathcal{S}_{h}(\bar{\mathfrak{u}}_{h})$, $\bar{p}_{h}$ solves \eqref{eq:discr_adj_eq}, and $\mathsf{p}(\bar{\mathfrak{u}}_{h})$ solves \eqref{eq:rhouh}. Note that $\bar{p}_h$ is precisely the Galerkin approximation of $\mathsf{p}(\bar{\mathfrak{u}}_{h})$. Since $\mathcal{E} = \mathcal{D}$, Lemma \ref{lem:equality_u_uh} guarantees that
$\bar{\mathfrak{u}}_{h} = \bar{u}$ in $\cup_{t \in \mathcal{D}}B_{t}(\varsigma)$ for $h$ sufficiently small. Consequently,
$
\mathfrak{M}_{h} = (\bar{y}_{h}[\bar{p}_{h}- \mathsf{p}(\bar{\mathfrak{u}}_{h})],\bar{\mathfrak{u}}_{h} - \bar{u})_{L^{2}(\Omega\setminus \cup_{t \in \mathcal{D}}\bar{B}_{t}(\varsigma))}.
$
We use \eqref{eq:error_est_adjoint_out_balls}, which holds uniformly for nonnegative reaction  coefficients bounded uniformly in $L^{\infty}(\Omega)$, and the uniform boundedness of $\bar{y}_{h}$ in $L^{\infty}(\Omega)$ to obtain
\begin{equation*}
|\mathfrak{M}_{h}| \lesssim \|\mathsf{p}(\bar{\mathfrak{u}}_{h}) - \bar{p}_{h}\|_{L^{2}(\Omega\setminus \cup_{t \in \mathcal{D}}\bar{B}_{t}(\varsigma))} \|\bar{u}-\bar{\mathfrak{u}}_{h}\|_{L^{2}(\Omega\setminus \cup_{t \in \mathcal{D}}\bar{B}_{t}(\varsigma))} \lesssim h^{2} |\log h|^{2} \|\bar{u}-\bar{\mathfrak{u}}_{h}\|_{L^{2}(\Omega)}.
\end{equation*}
Combining this bound with the estimates for $\mathfrak{E}_{h}$ and $\mathfrak{D}_{h}$ yields \eqref{eq:thm810_estimate}. The desired estimate \eqref{eq:ee_hsquare_semidisc} then follows from \eqref{eq:thm87_estimate}.

\emph{Case 2: $\mathcal{E} \subsetneq \mathcal{D}$.} As in \emph{Case 1}, Lemma
\ref{lem:equality_u_uh} gives $\bar{u} = \bar{\mathfrak{u}}_{h}$ in
$\cup_{t \in \mathcal{E}}B_{t}(\varsigma)$ for $h$ sufficiently small, whence $
| \mathfrak{M}_{h} | \lesssim \|\mathsf{p}(\bar{\mathfrak{u}}_{h}) - \bar{p}_{h}\|_{L^{2}(\Omega \setminus \cup_{t \in \mathcal{E}}\bar{B}_{t}(\varsigma))} \|\bar{u}-\bar{\mathfrak{u}}_{h}\|_{L^{2}(\Omega \setminus \cup_{t \in \mathcal{E}}\bar{B}_{t}(\varsigma))}$. The estimate \eqref{eq:error_est_adjoint_out_balls} is not directly applicable here, because it holds on $\Omega \setminus \cup_{t \in \mathcal{D}}\bar{B}_{t}(\varsigma) \subsetneq \Omega \setminus \cup_{t \in \mathcal{E}}\bar{B}_{t}(\varsigma)$. We thus proceed as follows:
\[
 \|\mathsf{p}(\bar{\mathfrak{u}}_{h}) - \bar{p}_{h}\|_{L^{2}(\Omega \setminus \cup_{t \in
 \mathcal{E}}\bar{B}_{t}(\varsigma))} \leq \|\mathsf{p}(\bar{\mathfrak{u}}_{h}) -
 \bar{p}_{h}\|_{L^{2}(\Omega \setminus \cup_{t \in \mathcal{D}}\bar{B}_{t}(\varsigma))}
 + \|\mathsf{p}(\bar{\mathfrak{u}}_{h}) - \bar{p}_{h}\|_{L^{2}(\cup_{t \in
 \mathcal{D} \setminus \mathcal{E}}\bar{B}_{t}(\varsigma))}.
\]
The first term is bounded by $h^{2}|\log h|^{2}$ in view of
\eqref{eq:error_est_adjoint_out_balls}. It remains to control the second one. To this end, set $\Sigma := \cup_{t \in \mathcal{D}\setminus\mathcal{E}}\bar{B}_{t}(\varsigma)$ and $e_{\mathsf{p}}:= \mathsf{p}(\bar{\mathfrak{u}}_h) - \bar{p}_h$, and adapt the arguments in \cite[Remark 6.11]{MR4438718}
and \cite[Lemma 5.5]{MR3973329}; the latter with the indicator function $\chi_{\Sigma}$ of $\Sigma$ in place of $1-\chi_{B}$. Thus,
\[
\|e_\mathsf{p}\|^{2}_{L^{2}(\Sigma)} \lesssim h^{2}|\log h|^{2}\|e_{\mathsf{p}}\|_{L^{2}(\Sigma)}
+ h\|e_\mathsf{p}\|_{L^{2}(\Sigma)}\|\bar{y} - \bar{y}_{h}\|_{L^{\infty}(\Omega)},
\]
where we have used \cite[Lemma 4.4(ii)]{MR3973329}. We now apply \eqref{eq:fully_state_estimates} and \eqref{eq:semi_error_hlogh} to deduce that $\|\bar{y} - \bar{y}_{h}\|_{L^{\infty}(\Omega)} \lesssim h |\log h|$ and thus that $\|e_{\mathsf{p}}\|_{L^{2}(\Sigma)} \lesssim h^{2}|\log h|^{2}$. Consequently, $\mathfrak{M}_h$ satisfies the same bound as in \emph{Case 1}. Combining this estimate with the bounds for $\mathfrak{E}_h$ and $\mathfrak{D}_h$ yields \eqref{eq:thm810_estimate}, and \eqref{eq:ee_hsquare_semidisc} follows from \eqref{eq:thm87_estimate}.
\end{proof}


\section{Numerical examples}\label{sec:num_examples}

We present two- and three-dimensional numerical experiments illustrating the performance of the fully discrete and semidiscrete schemes.

\subsection{Implementation details}

The numerical experiments were carried out using a code that we implemented in \texttt{C++}. All integrals involving $f$, $a(\cdot,y)$, and $\Pi_{[\mathtt{a},\mathtt{b}]}$, as well as the approximation errors, were computed using a quadrature rule exact for polynomials of degree $19$ when $d=2$ and degree $14$ when $d=3$. For each scheme, the resulting nonlinear system was solved using an adaptation of the semismooth Newton method in \cite[Appendix A.1]{MR2971171}, together with the multifrontal massively parallel sparse direct solver (MUMPS) \cite{MUMPS1,MUMPS2}.

The implementation of the semidiscrete scheme requires assembling terms where $\mathfrak{\bar{u}}_{h} = \Pi_{[\mathtt{a},\mathtt{b}]}(\alpha^{-1}\bar{y}_{h}\bar{p}_{h})$ exhibits kinks within the elements of $\mathscr{T}_{h}$; exact integration would require locating the (generally curved) active/inactive regions in such elements; see \cite[Remark 5.19]{MR2536007}. We instead compute such terms with our quadrature rule. This yields an \emph{approximate version} of the semidiscrete scheme, which nonetheless delivers, as reported below, optimal experimental rates of convergence; see Figure \ref{fig:ex-1.1}.

\subsection{General setting and exact solutions}

We set $\Omega=(0,1)^d$, with $d\in\{2,3\}$, $a(\cdot,y)=\sinh(y)$, $\alpha=1$, $\mathtt{a}=10^{-2}$, $\mathtt{b}=1$, and $y_t=-1$; note that $a(\cdot,0)=0$. We consider $\mathcal{D}=\{(\tfrac12,\tfrac12)\}$ when $d=2$ and $\mathcal{D}=\{(\tfrac12,\tfrac12,\tfrac12)\}$ when $d=3$.

We slightly modify the cost functional by setting $\mathfrak{J}(y,u):=J(y,u)+\int_\Omega gy\,\mathrm{d}x$, where $g \in L^{\infty}(\Omega)$. This modification only affects the adjoint equation, which becomes
\begin{equation}\label{weak_mod_adj_eq}
\int_\Omega\nabla w\cdot\nabla p\mathrm{d}x + \int_\Omega\left[\frac{\partial a}{\partial y}(\cdot,y)+u\right]pw\mathrm{d}x
= \sum_{t\in\mathcal{D}}\langle(y(t)-y_t)\delta_t,w\rangle + \int_\Omega gw\mathrm{d}x
\end{equation}
for all $w\in W^{1,s}_0(\Omega)$, and allows us to prescribe the optimal adjoint state as a compactly supported cutoff of the fundamental solution of $-\Delta$. More precisely, setting $\kappa_t:=\bar y(t)-y_t$ and $\varrho:=|x-t|$, for $x\neq t$, we define
\[
\bar p(x) = \kappa_t\,\Theta(\varrho)\,E(\varrho), \quad E(\varrho)= -(2\pi)^{-1}\log \varrho \text{ for } d = 2, \quad E(\varrho)= (4\pi \varrho)^{-1} \text{ for } d = 3.
\]
Here, $\Theta:[0,\infty)\to[0,1]$ is the cutoff function given by $\Theta(\varrho) =1 $ for $\varrho \leq \tfrac18$, $\Theta(\varrho) = 1-S(8\varrho-1)$ for $\varrho \in [\tfrac18,\tfrac14]$, and $\Theta(\varrho) = 0$ for $\varrho\geq\tfrac14$, where $S(s)=6s^5-15s^4+10s^3$. The optimal state is $\bar y(x) = \Pi_{i=1}^d\sin(\pi x_i)$. Notice that $\bar y(t)=1\neq0$.
Moreover, since $a(\cdot,0) = \sinh(0) = 0$, we have $f - a(\cdot,0) = f > 0$ in $\Omega$. Indeed, $-\Delta\bar y=d\pi^2\bar y>0$, $\sinh(\bar y)>0$, and $\bar u\bar y\geq\mathtt{a}\bar y>0$ in $\Omega$. Thus, Theorem \ref{thm:C01_reg_baru} guarantees the Lipschitz regularity of $\bar{u}$ for $d \in \{2,3\}$, and the corresponding error bounds of Section \ref{sec:error_estimates} apply.

\subsection{Results}

Figure \ref{fig:ex-1.1} shows the experimental convergence rates (ECRs) for the $L^2(\Omega)$-error in the control approximation for both discretization schemes in two dimensions (panel (A.1)) and three dimensions (panels (A.2)--(A.3)). In panel (A.1), the ECRs behave as $O(h)$ and $O(h^2)$ for the fully discrete and semidiscrete schemes, respectively, in agreement with our theory for $d=2$. In panel (A.2), the fully discrete scheme in three dimensions exhibits an ECR consistent with $O(h)$, in agreement with our theory for $d=3$. In panel (A.3), the semidiscrete scheme in three dimensions exhibits an ECR consistent with $O(h^2)$, exceeding the rate $O(h|\log h|)$ established in Section \ref{sec:error_estimates}; this suggests that the improved error bound for $d = 2$ may extend to $d=3$.

\begin{figure}[!ht]
\centering
\includegraphics[trim={0 0 0 0},clip,width=13.00cm,height=3.8cm,scale=0.4]{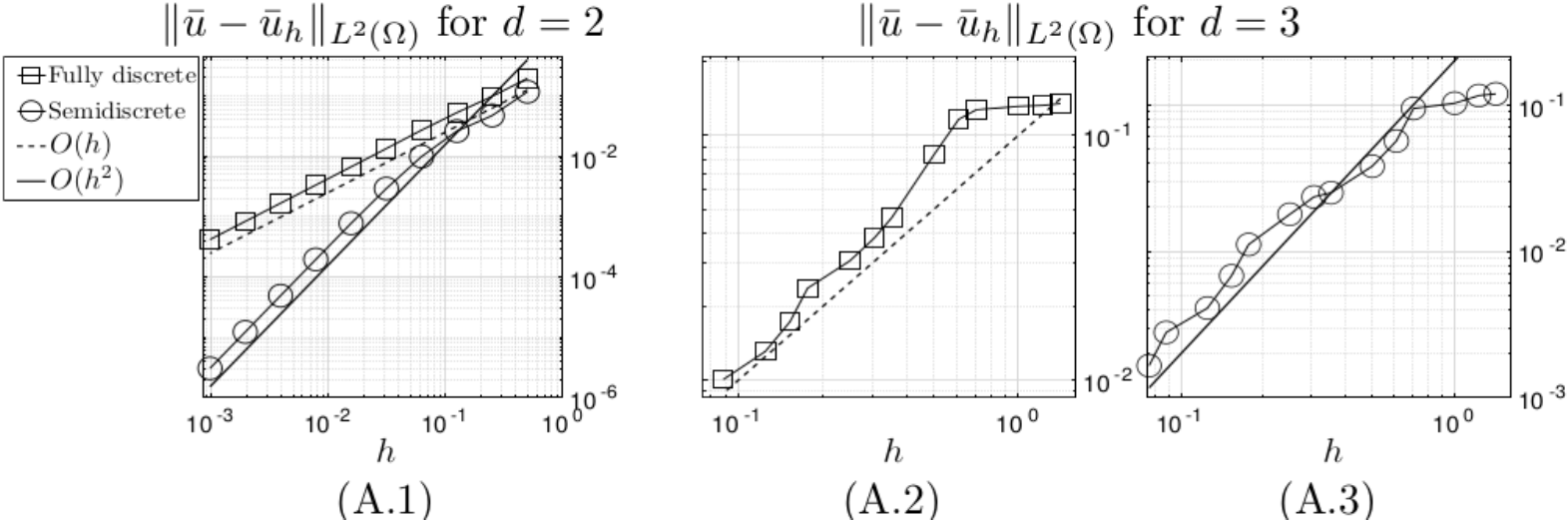}
\vspace{-0.7cm}
\caption{Experimental convergence rates for the $L^2(\Omega)$-error in the control approximation for the fully discrete and semidiscrete schemes with $d = 2$ (A.1) and $d = 3$ (A.2)--(A.3).}
\label{fig:ex-1.1}
\end{figure}

\bibliographystyle{siam}
\bibliography{bil_track_ref}

\end{document}